\newtheorem{lemma}{Lemma}
\newtheorem{proposition}{Proposition}
\newtheorem{theorem}{Theorem}
\newtheorem{corollary}{Corollary}
\newtheorem{case}{Case}
\newtheorem{example}{Example}
\newtheorem{remark}{Remark}
\def\nd{\nmid}
\newcommand{\bmaut}{\mathop{\mathrm{Aut}}}
\newcommand{\bminn}{\mathop{\mathrm{Inn}}}
\begin{document}

\begin{center}

\LARGE \textbf{Minimal embeddings of small finite groups}

\end{center}

\begin{center}

\large Robert Heffernan, Des MacHale and Brendan McCann

\end{center}

\begin{abstract}

We determine the groups of minimal order in which all groups of order $n$ can embedded for $ 1 \leq n \leq 15$. We further determine the order of a minimal group in which all groups or order $n$ or less can be embedded, also for $ 1 \leq n \leq 15 $.

AMS Classification: 20 D 99, 20 E 07. Keywords: embeddings of groups, minimal embeddings, small finite groups.

\end{abstract}

\section{Introduction}

The question as to whether a given group, or collection of groups, can be embedded in another plays an important role in group theory.           A very early result in this direction is Cayley's Theorem (Cayley \cite{Cayley1}, \cite{Cayley2}), which states that any group of order $n$ can be expressed as a permutation group on $n$ symbols. Equivalently, all groups of order $n$ or less can be embedded in $S_n$, the symmetric group of degree $n$. 				Embeddings in symmetric groups of minimal degree have been studied by, amongst others, Johnson \cite{Johnson}, Wright \cite{Wright}  and, more recently, Saunders (\cite{Saunders1}, \cite{Saunders2}). However embeddings in (abstract) groups of minimal order do not appear to have been given much consideration thus far.          For example, one might ask for what values of $n$ is the order of $S_n$ minimal with respect to the embedding of all groups of order $n$ or less.                     A more general problem is to determine a group, or groups, of minimal order in which a given finite collection $\Gamma$ of finite groups can be embedded.          This is likely to be difficult when $ | \Gamma | $ is large, especially where the groups in $\Gamma$ do not have coprime orders.  

The purpose of the present paper is consider the case where $ \Gamma $ comprises either the groups of order $n$, or the groups of order $n$ or less, for small values of $n$. Thus, in Sections \ref{order8} - \ref{p-odd}, we determine the groups of smallest order in which all groups of order $n$ can be embedded for $ n = 1, \dots, 15$, and find the minimal order of a group in which all groups of order $p^3$ can be embedded, where $p$ is an odd prime. Then, in Sections \ref{soluble}, \ref{non-soluble} and \ref{leqnleq15} we consider the more complex problem of finding examples of groups of minimal order in which all groups of order $n$ or less can be embedded, for $ n = 1, \dots, 15 $. Apart from the trivial cases $ n = 1, 2 $, it will be shown that the latter groups are not unique. Furthermore, for $ 6 \leq n \leq 15 $, the orders of the respective minimal groups will be shown to be less than $ n! = |S_n| $.

A useful elementary lower bound for the order of such minimal groups is found by noting that if the $p$-group $P$ has a cyclic subgroup $P_1$,  of order $p^{k_1}$, and an elementary abelian subgroup $P_2$, of rank $k_2$, then $ | P_1 \cap P_2 | \leq p $, so
\[ |P| \geq |P_1P_2| = \frac{|P_1||P_2|}{|P_1 \cap P_2|} \geq \frac{p^{k_1}p^{k_2}}{p} = p^{k_1 + k_2 -1} .\]
It follows that if all groups of order $p^k$ can be embedded in $G$, then the Sylow $p$-subgroups of $G$ will have order at least $p^{2k-1}$. This yields the following lower bound:

\begin{lemma}\label{pbound}

Let $p$ be a prime and let $G$ be a finite group in which all groups of order $p^k$ can be embedded. Then $|G|$ is a multiple of $p^{2k - 1}$.
\end{lemma}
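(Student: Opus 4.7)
The plan is to reduce immediately to the inequality already derived in the preamble by exhibiting, inside a single Sylow $p$-subgroup of $G$, both a cyclic subgroup of order $p^k$ and an elementary abelian subgroup of rank $k$.

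First I would single out two particular groups of order $p^k$: the cyclic group $C_{p^k}$ and the elementary abelian group $(C_p)^k$ of rank $k$. By hypothesis, both embed in $G$, so $G$ contains subgroups $H_1 \cong C_{p^k}$ and $H_2 \cong (C_p)^k$. Both are $p$-subgroups of $G$, so each is contained in some Sylow $p$-subgroup of $G$.

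The next step, which is really the only point needing any care, is to place $H_1$ and $H_2$ inside a \emph{common} Sylow $p$-subgroup. Let $P$ be a Sylow $p$-subgroup of $G$ containing $H_1$, and let $Q$ be a Sylow $p$-subgroup containing $H_2$. Since all Sylow $p$-subgroups are conjugate, there exists $g \in G$ with $gQg^{-1} = P$; then $gH_2g^{-1} \leq P$ is again an elementary abelian subgroup of rank $k$ (being isomorphic to $H_2$). Replacing $H_2$ by this conjugate, I may assume $H_1, H_2 \leq P$.

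Finally, applying the inequality from the preamble of the lemma to $P$ with $P_1 = H_1$ (cyclic of order $p^k$) and $P_2 = H_2$ (elementary abelian of rank $k$), so $k_1 = k_2 = k$, yields
\[
|P| \geq p^{k_1 + k_2 - 1} = p^{2k-1}.
\]
Since $|P|$ is a power of $p$ dividing $|G|$, it follows that $p^{2k-1}$ divides $|G|$, as required. The potential obstacle — making sure $H_1$ and $H_2$ sit in the same Sylow subgroup — is dispatched by the conjugacy of Sylow subgroups, so no further work is needed.
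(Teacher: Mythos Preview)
Your proof is correct and follows essentially the same approach as the paper: the paper's argument is contained in the paragraph preceding the lemma, where the inequality $|P|\geq p^{k_1+k_2-1}$ is derived and then applied with $k_1=k_2=k$ inside a Sylow $p$-subgroup of $G$. Your only addition is to make explicit the Sylow conjugacy step needed to place both the cyclic and the elementary abelian subgroup in a common Sylow $p$-subgroup, which the paper leaves implicit.
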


More generally we have

\begin{lemma}\label{nbound}

Let $ \{ p_1, \dots , p_m \} $ denote the set of distinct primes that are less than or equal to $n$. For $ i = 1, \dots , m $ let $ k_i$ be maximal such that $ p_i^{k_i} \leq n $. If $G$ is a finite group in which all groups of order $n$ or less can be embedded, then $|G|$ is a multiple of $ p_1^{2k_1 - 1} \cdots \ p_m^{2k_m - 1} $.

\end{lemma}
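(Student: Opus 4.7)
The plan is simply to apply Lemma \ref{pbound} one prime at a time and then assemble the resulting divisibility conditions. Fix an index $i \in \{1, \dots, m\}$. By the choice of $k_i$ we have $p_i^{k_i} \leq n$, so every group of order $p_i^{k_i}$ is, in particular, a group of order at most $n$, and hence embeds in $G$ by hypothesis. Thus the hypothesis of Lemma \ref{pbound} is satisfied for the prime $p_i$ with exponent $k_i$, and that lemma yields $p_i^{2k_i - 1} \mid |G|$.

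To finish, I would observe that the primes $p_1, \dots, p_m$ are pairwise distinct, so the prime powers $p_1^{2k_1 - 1}, \dots, p_m^{2k_m - 1}$ are pairwise coprime. By unique factorisation (equivalently, by looking at the orders of Sylow subgroups for distinct primes, which are coprime divisors of $|G|$), the fact that each $p_i^{2k_i - 1}$ divides $|G|$ forces their product to divide $|G|$. This gives the claimed bound $p_1^{2k_1 - 1} \cdots p_m^{2k_m - 1} \DIV |G|$.

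There is no real obstacle here: the lemma is essentially a bookkeeping corollary of Lemma \ref{pbound}. The only point that merits a moment's care is verifying the hypothesis of Lemma \ref{pbound} at each prime, namely that \emph{all} groups of order $p_i^{k_i}$ (not merely one chosen group of that order) embed in $G$; this is immediate because each such group has order $\leq n$ and so is covered by the standing assumption on $G$.
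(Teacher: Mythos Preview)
Your proof is correct and matches the paper's intended approach: the paper does not give an explicit proof of Lemma~\ref{nbound}, presenting it immediately after Lemma~\ref{pbound} with the phrase ``More generally we have'', so it is clearly meant as exactly the straightforward prime-by-prime corollary you have written out.
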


Thus, for example, Lemma \ref{pbound} shows that if $G$ is a group of minimal order in which all groups of order $8$ can be embedded, then $|G|$ is a multiple of 32, while Lemma \ref{nbound} shows that if all groups or order 12 or less can be embedded in $G$, then $|G|$ is a multiple of $ 2^5 \cdot 3^3 \cdot 5 \cdot 7 \cdot 11 = 332 \ 640 $. In Section \ref{order8} we will show that the former bound is attained, whereas the results of Sections \ref{soluble} and \ref{non-soluble} will show that the latter is not. In general, we note that the bound given by Lemma \ref{nbound} will be attainable only for a relatively small number of values of $n$. Indeed Lemma  \ref{p^knotembed}, in Section \ref{p-odd} below, shows that when $p$ is an odd prime and $ k \geq 3 $, the lower bound given by Lemma \ref{pbound} cannot be attained. Furthermore, the same result shows that the bound given by Lemma \ref{nbound} will always be exceed for $n\geq 27$.

In what follows our notation will generally be standard and all groups, and collections of groups, will be finite. For clarity we note that $D_n$ will denote the dihedral group of order $2n$, given by $ \langle  \ a, \ b \mid  a^{n} = 1 = b^{2}, \ bab = a^{-1} \ \rangle$, and $Q_{n}$ will denote the dicyclic group of order $4n$, given by $ \langle \ a, \ b \mid  a^{2n} = 1, \  b^{2} = a^{n}, \ b^{-1}ab = a^{-1} \ \rangle$. In particular $Q_{2}$ is the quaternion group of order 8 and $ Q_{3} $ is the non-trivial extension of $ C_{3} $ by $ C_{4} $.

We note that some exploratory work was done with the aid of the computational group theory system GAP~\cite{GAP}.

\section{Embeddings of groups of order 8}\label{order8}

In this section we will show that $G$ is a group of minimal order in which all groups of order 8 can be embedded if and only if $G$ is isomorphic either to the semi-direct product of a cyclic group of order 8 with its automorphism group, or to the direct product of a quasi-dihedral group of order 16 with a cyclic group of order 2. We first describe certain groups in which not all groups of order 8 can be embedded.

\begin{lemma}\label{Habex4}

Let $G$ be a group such that $ 2^5 \top |G| $ and suppose that $G$ has a subgroup $ H $ such that
\begin{enumerate}[(i)]

\item
$ |H| = 16 $;
\item
$H$ is abelian;
\item
$H$ has exponent at most $4$.

\end{enumerate}
Then not all groups of order $8$ can be embedded in $G$.

\end{lemma}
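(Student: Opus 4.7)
The plan is to argue by contradiction. Suppose every group of order 8 embeds in $G$, and let $P$ be a Sylow 2-subgroup of $G$ that contains $H$. Then $|P|=32$ by the hypothesis on $|G|$, and $H\trianglelefteq P$ since it has index 2. Because Sylow 2-subgroups are conjugate and containing a given group up to isomorphism is conjugation-invariant, every isomorphism type of group of order 8 appears as a subgroup of $P$. The lemma thus reduces to the purely 2-local claim that no such $P$ can host all five groups of order 8.

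The first step is to exploit the embedding $C_8\hookrightarrow P$: pick $y\in P$ of order 8. As $\exp H\le 4$ we have $y\notin H$ and $\langle y\rangle\cap H\le\langle y^2\rangle$, so $|\langle y\rangle\cap H|\le 4$. On the other hand, from $|\langle y\rangle H|\le|P|=32$ and $|\langle y\rangle H|=8\cdot 16/|\langle y\rangle\cap H|$ one obtains $|\langle y\rangle\cap H|\ge 4$. Hence $\langle y\rangle\cap H=\langle y^2\rangle$, $P=\langle y\rangle H$, and $y^2\in H$. Because $H$ is abelian of exponent 4 and $y^2$ has order 4, $\langle y^2\rangle$ is a direct summand of $H$, so $H\cong C_4\times C_4$ or $H\cong C_4\times C_2\times C_2$ (the type $C_2^4$ is excluded, having no elements of order 4). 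Conjugation by $y$ induces an automorphism $\phi$ of $H$ with $\phi^2=\mathrm{id}$ and $\phi(y^2)=y^2$, so $P$ is determined up to isomorphism by one of a short list of pairs $(H,\phi)$.

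The proof is then completed by walking through those pairs and exhibiting, for each, some group of order 8 that does not embed in $P$. When $\phi=\mathrm{id}$, $P$ is abelian and isomorphic to $C_8\times C_4$ or $C_8\times C_2\times C_2$, so neither $D_4$ nor $Q_2$ embeds. When $\phi\neq\mathrm{id}$, the squaring identity
\[
(y^i h)^2 \;=\; y^{2i}\,\phi(h)\,h \qquad (i\text{ odd},\ h\in H)
\]
governs the orders of the elements of $P\setminus H$ and, in particular, pins down the involutions and the elements of order 4 there; comparing this inventory with the subgroup structures of $D_4$, $Q_2$ and $C_2^3$ reveals in each case that at least one of these three groups of order 8 fails to embed in $P$.

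The main obstacle is precisely this closing enumeration: each individual pair $(H,\phi)$ is handled by a short computation, but the aggregate list must be organised carefully to ensure that no admissible configuration slips through. A convenient way to discharge this step is to identify each resulting $P$ within the classification of groups of order 32 and appeal to its known subgroup lattice, rather than verifying subgroup non-existence from scratch in every case.
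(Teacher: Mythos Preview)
Your reduction to a Sylow $2$-subgroup $P$ of order $32$ with $H\trianglelefteq P$, $P=\langle y\rangle H$, $\langle y\rangle\cap H=\langle y^2\rangle$, and $\phi^2=\mathrm{id}$ is correct and matches the paper's opening moves. Where you diverge is in the endgame: you propose to classify the pairs $(H,\phi)$ (or equivalently the resulting groups $P$ of order $32$) and check case by case which group of order $8$ fails to embed. This works, but the enumeration is not entirely short---for $H\cong C_4\times C_4$ and $H\cong C_4\times C_2\times C_2$ there are several conjugacy classes of involutions in $\bmaut(H)$ fixing $y^2$, and you yourself flag the bookkeeping as the main obstacle.

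The paper avoids the enumeration altogether. From the same setup it extracts the single structural fact $\langle y^2\rangle\leqslant Z(P)$ (since $H$ and $\langle y\rangle$ are both abelian and generate $P$), and then plays the embeddings of $C_2^3$ and $Q_2$ off against each other directly. Taking $V\cong C_2\times C_2\times C_2$ inside $P$, the product $V\langle y^2\rangle$ is abelian; either it is all of $P$ (and $P$ is abelian, killing $D_4$ and $Q_2$), or it has index $2$, in which case $V=\Omega_1(V\langle y^2\rangle)$ is characteristic there and hence normal in $P$. Now if $Q\cong Q_2$ embeds, order considerations force $Q\cap V=Z(Q)$ and $P=QV$, so $P/V\cong Q/Z(Q)\cong C_2\times C_2$; since $V$ has exponent $2$, $P$ has exponent $4$, contradicting the presence of $y$ of order $8$. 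This three-line finish replaces your entire case analysis. Your approach is perfectly valid and perhaps more mechanical to execute, but the paper's argument is both shorter and independent of the classification of groups of order $32$.
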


\begin{proof}

By Sylow's Theorems we may assume that $ |G| = 32 $. We suppose that $C_{8}$ can be embedded in $G$ and let $ U \leqslant G $ be such that $ U = \langle x \rangle \cong C_{8} $. Now $| G : H | = 2 $ and $H$ has exponent at most $4$ so, by comparison of orders, $ G = HU $ and $ H \cap U = \langle x^{2} \rangle \cong C_{4} $. As $H$ and $U$ are both abelian, we have $ \langle x^{2} \rangle \leqslant Z(G) $. We suppose further that $ C_{2} \times  C_{2} \times  C_{2} $ can also be embedded in $G$ and let $ V  \leqslant G $ be such that $ V = \langle  v_{1}, \ v_{2}, \ v_{3} \rangle \cong C_{2} \times  C_{2} \times  C_{2} $. Since $ \langle x^{2} \rangle \leqslant Z(G) $ we see that $ V \langle x^{2} \rangle $ is an abelian group. 

If $ V \cap \langle x^{2} \rangle = 1$ then $ |V \langle x^{2} \rangle| = 32 $ so $G$ is abelian and the non-abelian groups of order $8$ cannot be embedded in $G$. Hence we may suppose that $ |V \cap \langle x^{2} \rangle| = 2 $. Without loss of generality we let $  V \cap \langle x^{2} \rangle =  \langle x^{4} \rangle = \langle v_{3} \rangle$. Then $ V \langle x^{2} \rangle = \langle v_{1} \rangle \times \langle v_{2} \rangle \times \langle x^{2} \rangle \cong C_{2} \times C_{2} \times C_{4} $. Now $ | G : V \langle x^{2} \rangle | = 2 $ so $  V \langle x^{2} \rangle \unlhd G $. But  $ V = \langle g \in V \langle x^2 \rangle \mid o(g) = 2 \rangle $  is a characteristic subgroup of $ V \langle x^{2} \rangle $ and is thus also normal in $G$. Now if $Q_{2}$ can be embedded in $G$ then we let $ Q \leqslant G $ be such that $ Q \cong Q_{2}$ and see, by comparison of orders, that $ Q \cap V = Z(Q) \cong C_{2}$ and $ G = QV $. But then $ G/V = QV/V \cong Q/(Q \cap V) = Q/Z(Q) \cong C_{2} \times C_{2} $ so $G$ has exponent $4$ (since $V$ has exponent $2$), and a contradiction arises. We thus conclude that not all groups of order $8$ can be embedded in $G$.
\end{proof}

We now define two groups of order 32 in which all groups of order 8 can be embedded. We note that the groups of order 8 are: $ C_{2} \times C_{2}\times C_{2}, \ C_{2} \times C_{4}, \ C_{8}, \ D_{4}$ and $Q_{2} $. We let $ \langle x_1 \rangle \cong C_2 $ and let $ H_1 $ be the quasi-dihedral group of order 16, defined by
\[ H_1 = \langle \ x_2, \ y \mid x_2^2 = y^8 = 1 , \  x_2yx_2 = y^3 \ \rangle . \] 
We form the direct product $ \langle x_1 \rangle \times H_1 \cong C_2 \times H_1 $. Thus $ | \langle x_1 \rangle  \times H_1| = 32 $. We can verify that $ \langle \ x_1, \ x_2, \ y^4 \ \rangle  \cong  C_2 \times C_2 \times C_2 $, $\langle \ x_1, \ y^2 \rangle  \ \cong  C_2 \times C_4 $ and $ \langle y \rangle  \cong  C_8 $. We further see that $ (y^2)^{x_2} = (y^{x_2})^2 = y^6 = y^{-2} $. Thus $ \langle \ x_2, \ y^2 \ \rangle \cong D_4 $. In addition we have $ (x_2y)^2 = ( x_2yx_2y ) = y^{x_2}y = y^3y = y^4 $, and $ (y^2)^{x_2y} = (y^{-2})^y = y^{-2}$. Hence $ \langle \ y^2, \ x_2y \ \rangle $ has order 8, is non-abelian and is generated by elements of order 4. We conclude that $ \langle \ y^2, \ x_2y \ \rangle \cong Q_2 $. Therefore all groups of order 8 can be embedded in $C_2 \times H_1$.

We now let 
\[ H_2 = \langle \ x_1, \ x_2, \ y \mid  x_1^2 = x_2^2 = y^8 = 1 , \ [ x_1, x_2 ] = 1 , \ x_1yx_1 = y^5 , \  x_2yx_2 = y^3 \ \rangle . \]   
Thus $H_2$ is isomorphic to the holomorph of $C_8$, that is the semi-direct product of $ \langle y \rangle \cong C_8 $ by $ \bmaut( C_8) $ (identified with $ \langle \ x_1, \ x_2 \ \rangle $). As above, we can confirm that $ \langle \ x_1, \ x_2, \ y^4 \ \rangle  \cong  C_2 \times C_2 \times C_2 $, $\langle \ x_1, \ y^2 \ \rangle  \cong  C_2 \times C_4 $, $ \langle y \rangle  \cong  C_8 $, $ \langle \ x_2, \ y^2 \ \rangle \cong D_4 $ and $ \langle \ y^2, \ x_2y \ \rangle \cong Q_2 $. Thus $C_2 \times H_1$ and $H_2$ are non-isomorphic groups of order 32 in which all groups of order 8 can be embedded.

From Lemma \ref{pbound} we see that $C_2 \times H_1$ and $H_2$ are groups of minimal order in which all groups of order 8 can be embedded. Our next result shows that, up to isomorphism, they are the only such groups.

\begin{theorem}
If $G$ is a group of order 32 in which all groups of order 8 can be embedded then $G$ is isomorphic either to $C_2 \times H_1$ or $H_2$, as defined above.
\end{theorem}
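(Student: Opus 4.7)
The approach is to build up the structure of $G$ by combining the mandatory subgroups of order $8$ with Lemma~\ref{Habex4}. Choose $\langle y\rangle\cong C_8$ and $V\cong C_2\times C_2\times C_2$ inside $G$. Order counting forces $V\cap\langle y\rangle=\langle y^4\rangle$ and $G=V\langle y\rangle$, and since $V$ is abelian and $y$ commutes with $y^4$, we obtain $y^4\in Z(G)$.

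Next set $H=V\langle y^2\rangle$, so $|H|=16$, $H\triangleleft G$ (index $2$), and $H$ has exponent at most $4$. By Lemma~\ref{Habex4}, $H$ cannot be abelian. Inspection of the non-abelian groups of order $16$ shows that only $D_4\times C_2$ contains an elementary abelian subgroup of rank $3$, so $H\cong D_4\times C_2$. Writing $H=\langle r,s\rangle\times\langle t\rangle$ with $r=y^2$, one finds that $V$ must be one of the two copies $\langle r^2,t,s\rangle$ or $\langle r^2,t,sr\rangle$, and in either case every element of $V\setminus Z(H)$ inverts $r$. Hence $\langle y^2\rangle\triangleleft G$.

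Now let $K=C_G(y^2)$. Because $H$ is non-abelian, $y^2\notin Z(G)$, so $K$ has index exactly $2$ in $G$; it is a normal subgroup of order $16$ containing $\langle y\rangle$ in which $y^2$ is a central element of order $4$. The only order-$16$ $2$-groups satisfying these constraints are $C_8\times C_2$ and the modular maximal-cyclic group $M_{16}$. Any involution $s\in V\setminus K$ then gives a splitting $G=K\rtimes\langle s\rangle$ in which $s$ inverts $y^2$ and commutes with $V\cap K$.

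It remains to decide which involutory actions of $s$ on $K$ are compatible with $Q_{2}\le G$. In each case for $K$ there are four candidates for the image $sys$, namely the order-$8$ elements of $K$ whose square is $y^{-2}$, producing eight candidate groups in total. Since $Q_{2}$ has six elements of order $4$, a count of order-$4$ elements in each candidate rules out all but one action per case, yielding $G\cong C_2\times H_1$ (via the quasidihedral action) when $K\cong C_8\times C_2$, and $G\cong H_2$ (via the faithful $\bmaut(C_8)$-action) when $K\cong M_{16}$. The principal technical obstacle is this final enumeration, requiring one to check each of the eight candidate actions and to count its order-$4$ elements.
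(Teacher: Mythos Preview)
Your overall plan is sensible and genuinely different from the paper's approach (which organises the case analysis around whether $\langle y\rangle\unlhd G$ and whether a chosen involution centralises $y$, rather than around the isomorphism types of $H$ and $K$). However, there is a real gap at the classification step for $H$.

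You assert that among the non-abelian groups of order $16$, only $D_4\times C_2$ contains an elementary abelian subgroup of rank $3$. This is not correct: the group
\[
L=\langle\,a,b,c\mid a^2=b^2=c^4=1,\ [a,b]=1,\ c^{-1}ac=b,\ c^{-1}bc=a\,\rangle
\]
(that is, $(C_2\times C_2)\rtimes C_4$ with $C_4$ acting by the swap) is non-abelian of order $16$ and exponent $4$, and its seven involutions together with the identity form the subgroup $\langle a,b,c^2\rangle\cong C_2\times C_2\times C_2$. A quick invariant distinguishes $L$ from $D_4\times C_2$: the former has eight elements of order $4$, the latter only four. So $H$ could a priori be isomorphic to $L$, with $V=\langle a,b,c^2\rangle$ and $y^2$ conjugate to $c$.

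This matters for the next step. Your deduction that $\langle y^2\rangle\unlhd G$ rests on the claim that every element of $V\setminus Z(H)$ inverts $y^2$. In $D_4\times C_2$ that is true, but in $L$ it fails: with $y^2=c$ one has $a c a^{-1}=a c a=abc\notin\langle c\rangle$, so $a\in V\setminus Z(H)$ neither centralises nor inverts $y^2$, and $\langle y^2\rangle$ is not even normal in $H$. Thus the chain of reasoning leading to $\langle y^2\rangle\unlhd G$, and hence to the identification of $K=C_G(y^2)$, breaks down unless you first exclude $H\cong L$.

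The case $H\cong L$ can in fact be eliminated (for instance by pushing the $Q_2$-embedding constraint through the resulting structure of $G$), but this requires a separate argument that your outline does not supply. The paper sidesteps the issue entirely: it obtains $\langle y^2\rangle\unlhd G$ not from the structure of $H$ but by recognising $\langle y^2\rangle$ as the Frattini subgroup of an index-$2$ (hence normal) subgroup $\langle x_1,y\rangle$ of $G$. You might find it cleaner to borrow that manoeuvre rather than to repair the classification of $H$.
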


\begin{proof}
Let $G$ be a group of order 32 in which all groups of order 8 can be embedded. Then $G$ has a subgroup $ \langle y \rangle$ such that $ \langle y \rangle \cong C_8$. In addition $G$ has a subgroup $\langle \ x_1,\  x_2, \ x_3 \ \rangle$ with $\langle \ x_1, \ x_2, \ x_3 \ \rangle \cong C_2 \times C_2 \times C_2 $. Since $|G| = 32$ we have $\langle \ x_1, \ x_2, \ x_3 \rangle \cap \langle y \rangle = \langle y^4 \rangle \cong C_2 $. Without loss of generality we let $ y^4 = x_3 $ and see that $G$ is the product $ G = \langle \ x_1, \ x_2 \ \rangle \langle y \rangle $. Since $G$ is nilpotent $ \langle y \rangle $ is a proper subgroup of $ N_G(\langle y \rangle) $, so we may assume that $ \langle x_1 \rangle \leqslant N_G(\langle y \rangle) $. Hence
\[ \langle y \rangle \unlhd \langle \ x_1,\  y \ \rangle \unlhd \langle \  x_1, \ x_2 , \ y \ \rangle = G .\]

Now $ o(x_1) = 2 $ and $ x_1 \notin \langle y \rangle $ (since otherwise $ G = \langle y \rangle \langle x_2 \rangle $ and $|G| = 16$). Therefore $ \langle \ x_1, \ y \ \rangle $ is not cyclic. We have $ \langle y^2 \rangle = \Phi(\langle y \rangle) \leqslant \Phi( \langle \ x_1, \ y \ \rangle ) $ and $ | \langle \ x_1, \ y \ \rangle : \Phi( \langle \ x_1, \ y \ \rangle ) | \geq 4 $ (since $\langle \ x_1, \ y \ \rangle $ is not cyclic). Hence, by comparison of orders,
\[ \langle y^2 \rangle = \Phi( \langle \ x_1, \ y \ \rangle ) \unlhd G . \]
Since $ \langle y^2 \rangle \cong C_4 $ and $ \bmaut(C_4) \cong C_2$, we have $ | G : C_G(\langle y^2 \rangle ) | \leq 2$. If $ \langle y^2 \rangle \leqslant Z(G) $ then $ \langle \ x_1, \  x_2 , \ y^2 \ \rangle $ is an abelian subgroup of exponent 4 and index 2 in $G$. But then, by Lemma \ref{Habex4}, not all groups of order 8 can be embedded in $G$. Thus $ \langle y^2 \rangle \not\leqslant Z(G) $, so $ | G : C_G( \langle y^2 \rangle ) | = 2 $. Now $ C_G( \langle y^2 \rangle ) = \langle y \rangle ( C_G( \langle y^2 \rangle ) \cap \langle \ x_1, \ x_2 \ \rangle ) $, so we may assume, without loss of generality, that $ x_1 \in C_G( \langle y^2 \rangle ) $ and $ x_2 \notin C_G( \langle y^2 \rangle) $. In particular we have $ C_G( \langle y^2 \rangle ) = \langle \ x_1, \ y \ \rangle $.

We now proceed on a case by case basis:

\begin{case} 

$ \langle y \rangle \unlhd G $ and $ y^{x_1} = y $.

\end{case}

Here $ x_1 \in Z(G) $ and $ G = \langle x_1 \rangle \times \langle \ x_2, \ y \ \rangle $. Since $G$ is not abelian $x_2$ normalises, but does not centralise, $ \langle y \rangle $. Therefore we have $ y^{x_2} = y^k $, where $ k $ = 3, 5  or 7. If $ y^{x_2} = y^5 $ then $ (y^2)^{x_2} = ( y^{x_2} )^2 = ( y^5 )^2 = y^{10} = y^2 $, which cannot be the case since $x_2 \notin C_G ( \langle y^2 \rangle ) $. If $ y^{x_2} = y^7$ ($= y^{-1}$) then
\[  ( x_2 y^n)^2 = x_2 y^n x_2 y^n  = (y^n)^{x_2} y^n = y^{-n} y^n =1. \]
Hence the only elements of order 4 in $ \langle \ x_2, \  y \ \rangle $ are $ y^2$ and $ y^6$, and it follows that all elements of order 4 in $G$ are contained in $ \langle \ x_1, \  y^2 \ \rangle $, which is abelian. Thus $Q_2$, which has non-commuting elements of order 4, cannot be embedded in $G$. We conclude that $ y^{x_2} = y^3$, and it follows that $ G = \langle x_1 \rangle \times \langle \ x_2,\  y \ \rangle \cong C_2 \times H_1 $.

\begin{case}

$ \langle y \rangle \unlhd G $ and $ y^{x_1} \neq y $.

\end{case}

Since $x_1$ centralises $y^2$, but does not centralise $y$, we must have $ y^{x_1} = y^5 $. Now $ x_2 $ normalises $ \langle y \rangle $, but does not centralise $ \langle y^2 \rangle $. It follows that $ \langle \ x_1, \ x_2 \ \rangle / C_{ \langle  x_1,  x_2  \rangle }( \langle y \rangle ) \cong C_2 \times C_2 \cong \bmaut( C_8 ) \cong \bmaut( \langle y \rangle )$. Hence we see that $ G $ is the semi-direct product of $ \langle y \rangle $ by $ \langle \ x_1, \ x_2 \ \rangle $, so $ G \cong H_2 $. 

\begin{case}\label{Prop1Case3} 

$ \langle y \rangle \not\unlhd G $ and $ y^{x_1} = y $.

\end{case}

In this case we have $ x_1 \in Z(G)$. Furthermore $ \langle \ x_1, \  y \ \rangle / \langle x_1 \rangle \unlhd G / \langle x_1 \rangle $, and $ \langle \ x_1, \ y \ \rangle / \langle x_1 \rangle \cong C_8 $. If $ [ y, x_2 ] \in \langle x_1 \rangle $ then $x_2$ centralises $y^2$, which is ruled out. Thus $ [ y, x_2 ] \notin \langle x_1 \rangle $ and, since $ \langle y \rangle \not\unlhd G$, we must have $ y^{x_2} = y^kx_1 $, where $ k \not\equiv 1 $ mod(8). Furthermore $ \langle \ x_1, \ y \ \rangle / \langle x_1 \rangle = \langle \ y^kx_1 \ \rangle \langle x_1 \rangle / \langle x_1 \rangle \cong C_8 $, so it follows that $  k \equiv $ 3, 5 or 7 mod(8). If $ k \equiv 5 $ mod (8) then
\[ (y^2)^{x_2} = (y^{x_2})^2 = ( y^5x_1)^2 = y^{10}x_1^2 = y^2 . \]
But $x_2$ does not centralise $y^2$, so $ k \equiv 5 $ mod (8) is ruled out. Hence we have either $ y^{x_2} = y^3x_1 $ or $ y^{x_2} = y^7x_1 $ ($ = y^{-1}x_1 $).

We let $ Q \leqslant G $ be such that $ Q \cong Q_2 $. By comparison of orders we have $ \langle y \rangle \cap Q \neq 1 $. Thus $ \langle y^4 \rangle \leqslant \langle y \rangle \cap Q $. But $Q_2$ has a unique subgroup of order 2 so
\[ \langle y^4 \rangle = Z(Q) = Q \cap Z(G). \]
We further see that if $ g \in Q $ is such that $ o(g) = 4 $ then $ g^2 = y^4 $. In addition, since $y^4$ is the unique element of order 2 in $Q$ and $ y^4 \neq x_1 $, we see that $ Q \cap \langle x_1 \rangle = 1 $. Thus $ Q \langle x_1 \rangle / \langle x_1 \rangle \cong Q \cong Q_2 $, so $ G / \langle x_1 \rangle $ has a subgroup isomorphic to $Q_2$. Now, if $ y^{x_2} = y^7x_1 = y^{-1}x_1$ then $ G / \langle x_1 \rangle $ is isomorphic to $ \langle \ x, y \mid x^2 = y^8 = 1, \ xyx = y^{-1} \ \rangle \cong D_8 $. But $D_8$ has a unique subgroup of order 4, so $Q_2$ cannot be embedded in $G / \langle x_1 \rangle$. Thus we cannot have $ y^{x_2} = y^7x_1 $.

If $ y^{x_2} = y^3x_1 $ then
\begin{eqnarray*}
(x_2y^n)^2 & = & x_2y^nx_2y^n\\
& = & (y^n)^{x_2}y^n\\
& = & (y^3x_1)^ny^n\\
& = & y^{3n}x_1^ny^n\\
& = & x_1^ny^{4n}.
\end{eqnarray*}
Hence, if $n$ is odd, we have
\[ (x_2y^n)^2 = x_1y^{4n} \neq y^4 ;\]
and, if  $n =2k$ is even:
\[ (x_2y^n)^2 = x_1^{2k}y^{4(2k)} = (x_1^2)^k (y^8)^k= 1 \neq y^4 .\] 
Since $ x_1 \in Z(G) $ we similarly have
\[ (x_1x_2y^n)^2 = x_1^2(y^nx_2)^2 = (x_2y^n)^2 \neq y^4 .\]
It follows that if $ g \in G $ is such that $ g^2 = y^4 $, then $ g \in \langle \ x_1, \ y \ \rangle $, which is abelian. But, from above, $Q$ ($\cong Q_2$) is generated by elements whose square is equal to $y^4$, so $ Q \leqslant \langle \ x_1, \ y \ \rangle $, and a contradiction results. We conclude that if $ \langle y \rangle \not \unlhd G$ and $ y^{x_1} = y $ then not all groups of order 8 can be embedded in $G$.

\begin{case} $ \langle y \rangle \not\unlhd G $ and $ y^{x_1} \neq y $.

\end{case}

Since $ x_1 $ normalises $ \langle y \rangle $ and centralises $y^2$ we have $ y^{x_1} = y^5 $. In particular $ [ y, x_1 ] = y^{-1}y^{x_1} = y^{-1}y^5 = y^4 \in \langle \ x_1, \ y^2 \ \rangle $. Since $ x_1^{x_2} = x_1 $ and $ \langle  y^2 \rangle = \Phi ( \langle \ x_1 , y \ \rangle ) \unlhd G $ we further see that $ \langle \ x_1, \ y^2 \ \rangle \unlhd G $. In particular, since $ \langle \ x_1, \ y^2 \ \rangle / \langle y^2 \rangle $ is a normal subgroup of order 2 in $ G/ \langle y^2 \rangle $, we have $ \langle \ x_1, \ y^2 \ \rangle / \langle y^2 \rangle \leqslant Z( G / \langle y^2 \rangle )$. Since $ \langle y \rangle \not\unlhd G $ we have $ \langle y \rangle / \langle y^2 \rangle \not\unlhd G / \langle y^2 \rangle $, so $ G / \langle y^2 \rangle $ is a non-abelian group of order 8. As $ \langle \ x_1, \ x_2, \ y^2 \ \rangle / \langle y^2 \rangle \cong \langle \ x_1, \ x_2 \ \rangle \cong C_2 \times C_2 $ it follows that $ G / \langle y^2 \rangle \cong D_4 $.

As in Case \ref{Prop1Case3} we let $ Q $ be a subgroup of $G$ such that $ Q \cong Q_2 $. If $ \langle y^2 \rangle \leqslant Q $ then $ \langle y^2 \rangle \unlhd Q $ and $ Q / \langle y^2 \rangle \cong C_2 $. Since $ \langle \ x_1, \ y^2 \ \rangle / \langle y^2 \rangle = Z ( G / \langle y^2 \rangle ) $, the subgroups of order 2 in $ G / \langle y^2 \rangle $ are: $ \langle \ x_1, \ y^2 \ \rangle / \langle y^2 \rangle $, $ \langle \ x_2, \ y^2 \ \rangle / \langle y^2 \rangle $,  $ \langle \ x_1x_2, \ y^2 \ \rangle / \langle y^2 \rangle $, $ \langle \ x_1y, \ y^2 \ \rangle / \langle y^2 \rangle $ and $ \langle  y \rangle / \langle y^2 \rangle $. But $y^2$ is centralised by $ x_1$, so $ \langle \ x_1, \ y^2 \ \rangle $, $ \langle \ x_1y, \ y^2 \ \rangle $ and $ \langle y \rangle $ are abelian. We can further confirm that $ \langle \ x_2, \ y^2 \ \rangle $ and $ \langle \ x_1x_2, \ y^2 \ \rangle $ are both isomorphic to $D_4$, so $Q$ cannot be identical to any of these subgroups. Therefore $ \langle y^2 \rangle \not\leqslant Q $, whence $ Q \cap \langle y^2 \rangle = Q \cap \langle y \rangle = \langle y^4 \rangle  = Z(Q) $. Then
\[ Q \langle y^2 \rangle / \langle y^2 \rangle \cong Q / ( Q \cap \langle y^2 \rangle ) = Q / Z(Q)  \cong C_2 \times C_2. \]
Now $ \langle \ x_1, \ x_2 , \ y^2 \ \rangle / \langle y^2 \rangle $ and $ \langle \ x_1, \ y \ \rangle / \langle y^2 \rangle $ are the only subgroups isomorphic to $ C_2 \times C_2 $ in $ G / \langle y^2 \rangle $ ($\cong D_4$). Hence either $ Q \leqslant  \langle \ x_1, \ x_2, \ y^2 \ \rangle $ or $ Q \leqslant \langle \ x_1, \ y \ \rangle $. Since $x_1$ centralises both $x_2 $ and $y^2$ we have $ \langle \ x_1, \  x_2, \ y^2 \ \rangle = \langle x_1 \rangle \times \langle \ x_2, \ y^2 \ \rangle \cong C_2 \times D_4 $. But all elements of order 4 in $ C_2 \times D_4 $ commute with each other, so $Q_2$ cannot be embedded in $ \langle \ x_1, \ x_2, \ y^2 \ \rangle $. We further have
\begin{eqnarray*}
(x_1y^n)^2 & = & x_1y^nx_1y^n\\
&= & y^{5n}y^n\\
& = & y^{6n} .
\end{eqnarray*}
Hence $ o( x_1y^n) = 4 $ if and only if $ 2 \top n $. Thus all elements of order 4 in $ \langle \ x_1, \ y \ \rangle $ are contained in $ \langle \ x_1, \ y^2 \ \rangle $, which is abelian. Therefore $Q_2$ cannot be embedded in $ \langle \ x_1, \ y \ \rangle $, and hence in $G$ in this instance.
\end{proof}

\setcounter{case}{0}

\section{Embeddings of Groups of order 12}\label{order12}

We now proceed to show that there is a unique group of minimal order in which all groups of order 12 can be embedded, namely $ G = S_3 \times S_4$. We first show that all groups of order 12 can indeed be embedded in $ S_3 \times S_4$.

\begin{lemma}\label{lem4}
	Let $G = S_{3} \times S_{4}$. Then all groups of order 12 can be embedded in $G$.
\end{lemma}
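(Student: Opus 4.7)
The plan is to exhibit an explicit embedding of each of the five isomorphism types of group of order $12$ into $G = S_3 \times S_4$ and verify, group by group, that the generators satisfy the defining relations. The five types are $C_{12}$, $C_2 \times C_6$, $A_4$, $D_6$ and $Q_3$, and $|G|=144$, so there is ample room; the issue is finding, for each group, generators in $G$ with the right orders and commutation relations.

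Four of the five cases are essentially free. For $A_4$, take the image of $A_4 \leqslant S_4$ under the embedding $g \mapsto (e,g)$. For $D_6 \cong S_3 \times C_2$, take $S_3 \times \langle (1\,2) \rangle$ inside $G$. For $C_{12} \cong C_3 \times C_4$, take $\langle (1\,2\,3) \rangle \times \langle (1\,2\,3\,4) \rangle$. For $C_2 \times C_6 \cong C_3 \times C_2 \times C_2$, take $\langle (1\,2\,3)\rangle$ in the first factor together with the Klein four-group $\langle (1\,2)(3\,4), (1\,3)(2\,4)\rangle$ in $S_4$; since the two factors commute and have coprime order, their product is isomorphic to $C_3 \times V_4 \cong C_2 \times C_6$. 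In each case it is enough to note the orders of the chosen generators and that elements in distinct direct factors commute.

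The main obstacle is $Q_3 = \langle a,b \mid a^6 = 1,\ b^2 = a^3,\ b^{-1}ab = a^{-1}\rangle$, which is not a direct product and whose element $b$ of order $4$ has $b^2 \neq 1$, so a naive attempt to put $b$ purely in the second coordinate fails: if $b = (e,\beta)$ with $\beta^2$ equal to the second coordinate of $a^3$, and we try $a$ with second coordinate a transposition, then $a^3$ has second coordinate an odd permutation, which cannot be a square in $S_4$. The way around this is to give $a^3$ a second coordinate that is an even involution. Concretely, I would set
\[ a = \bigl( (1\,2\,3),\ (1\,2)(3\,4) \bigr), \qquad b = \bigl( (1\,2),\ (1\,3\,2\,4) \bigr). \]
Then $a$ has order $\mathrm{lcm}(3,2) = 6$, $b$ has order $\mathrm{lcm}(2,4) = 4$, and $b^2 = \bigl(e,(1\,2)(3\,4)\bigr) = a^3$. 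The relation $b^{-1}ab = a^{-1}$ decouples across the two factors: in the first coordinate $(1\,2)(1\,2\,3)(1\,2) = (1\,3\,2) = (1\,2\,3)^{-1}$, and in the second coordinate $(1\,3\,2\,4)$ commutes with $(1\,2)(3\,4)$ (a routine check), which matches the fact that $(1\,2)(3\,4) = \bigl((1\,2)(3\,4)\bigr)^{-1}$.

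Finally, I would note that $|\langle a,b\rangle| = 12$: the relations imply $|\langle a,b\rangle| \leqslant 12$, and since $b \notin \langle a \rangle$ (the second coordinate of any power of $a$ is either $e$ or $(1\,2)(3\,4)$, never a $4$-cycle), the subgroup $\langle a,b \rangle$ properly contains $\langle a\rangle \cong C_6$, forcing equality. Together with the four direct-product embeddings above, this completes the verification that all five groups of order $12$ embed in $S_3 \times S_4$.
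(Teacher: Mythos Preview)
Your proof is correct and follows essentially the same approach as the paper: list the five isomorphism types of groups of order $12$ and exhibit an explicit embedding of each into $S_3 \times S_4$. Your specific generators differ slightly from the paper's (for instance, the paper embeds $D_6$ via a diagonal element $\langle b, a(12)\rangle$ rather than as the direct product $S_3 \times \langle (12)\rangle$, and uses $\langle a(13)(24), b(1234)\rangle$ for $Q_3$), but the underlying idea---combining a $3$-cycle in $S_3$ with a double transposition in $S_4$ to get the order-$6$ element, and an $S_3$-involution with a $4$-cycle for the order-$4$ element---is the same.
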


\begin{proof}
We let $ S_{3} = \langle \ a, \ b \mid a^{3} = b^{2} = 1, \  bab = a^{-1} \ \rangle $ and express $ S_{4} $ in terms of the usual cycle notation. Up to isomorphism the groups of order 12 are: $  C_{12}, \ C_{2} \times C_{2}\times C_{3}, \ D_{6},  \ Q_{3} \mbox{\ and\ } A_{4}. $ The following can then be confirmed directly
\begin{eqnarray*}
C_{12} & \cong & \langle \ a(1234) \ \rangle ;\\
C_{2} \times C_{2}\times C_{3} & \cong & \langle \ a, \ (12)(34), \ (13)(24) \ \rangle ;\\
 D_{6} & \cong & \langle \ b, \ a(12) \ \rangle ; \\
Q_{3} & \cong & \langle \ a(13)(24), \ b(1234) \ \rangle . 
\end{eqnarray*}
Since $ A_4 \leqslant S_4 $, this shows that all groups of order 12 can be embedded in $G$.
\end{proof}

We show next that the order of $ S_3 \times S_4 $ is minimal with respect to the embedding of all groups of order 12.

\begin{lemma}
	Let $G$ be a group of minimal order such that every group of order 12 can be 
	embedded in G. Then $|G| = 144$.
\end{lemma}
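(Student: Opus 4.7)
The plan is to combine the upper bound $|G| \leq 144$ from Lemma~\ref{lem4} with Sylow-theoretic lower bounds, then rule out the remaining small possibilities by case analysis. First, divisibility restricts the options. Since $C_{12} \leq G$, we have $12 \mid |G|$; and Lemma~\ref{pbound} with $p = 2$, $k = 2$ (applied via $C_{4} \leq C_{12}$ and $C_{2} \times C_{2} \leq C_{2} \times C_{2} \times C_{3}$) gives $8 \mid |G|$, so $24 \mid |G|$. Combined with Lemma~\ref{lem4}, $|G| \in \{24, 48, 72, 96, 120, 144\}$. It remains to eliminate the five smaller values.

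The central tool applies whenever $|G|_{3} = 3$, which covers $|G| \in \{24, 48, 96, 120\}$: every subgroup of $G$ of order $3$ is then a conjugate Sylow $3$-subgroup. Write $P$ for such a subgroup. Three properties of the required embeddings combine: (i) $A_{4} \leq G$ contributes four distinct Sylow $3$-subgroups, so $n_{3}(G) \geq 4$; (ii) $C_{12} \leq G$ centralizes its own Sylow $3$-subgroup, so by conjugacy every $C_{G}(P)$ contains a copy of $C_{12}$, giving $|C_{G}(P)| \geq 12$; (iii) $D_{6} \leq G$ furnishes an involution inverting a $C_{3}$, forcing $|N_{G}(P) : C_{G}(P)| = 2$ and $|N_{G}(P)| \geq 24$. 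Therefore
\[
|G| = n_{3}(G) \cdot |N_{G}(P)| \geq 4 \cdot 24 = 96,
\]
which rules out $|G| \in \{24, 48\}$ at once.

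The inequality is nearly sharp for the other two cases. For $|G| = 120$, it forces $n_{3}(G) = 4$, $|N_{G}(P)| = 30$, and $|C_{G}(P)| = 15$; but a group of order $15$ is cyclic and so cannot contain the $C_{12}$ that $C_{G}(P)$ must contain, a contradiction. For $|G| = 96$ it forces $|C_{G}(P)| = 12$ with $C_{12} \leq C_{G}(P)$, so $C_{G}(P) \cong C_{12}$; then the $C_{2} \times C_{2}$ factor of the embedded $C_{2} \times C_{2} \times C_{3}$ must lie inside some $C_{G}(P')$ with $P'$ a Sylow $3$-subgroup, but $C_{12}$ has only a single involution, again a contradiction.

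This leaves $|G| = 72$, where $|G|_{3} = 9$ and the conjugacy-based argument above no longer applies. A preliminary step shows that the Sylow $3$-subgroup of $G$ must be $C_{3} \times C_{3}$: a cyclic Sylow $3$-subgroup $C_{9}$ has a unique $C_{3}$, and in $|G| = 72$ order considerations force any two distinct Sylow $3$-subgroups to intersect in that $C_{3}$, which would yield a unique $C_{3}$ in $G$ --- incompatible with the four Sylow $3$-subgroups of $A_{4}$. An elementary abelian Sylow $3$-subgroup $E$ cannot be normal in $G$ either, since otherwise $A_{4} \cap E$ would be a normal subgroup of order $3$ in $A_{4}$, which does not exist; hence $n_{3}(G) = 4$ and $|N_{G}(E)| = 18$. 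A direct analysis of the few possible structures of $N_{G}(E)$, together with the simultaneous embedding requirements for $C_{12}$, $D_{6}$, and $C_{2} \times C_{2} \times C_{3}$, completes the exclusion. This last case is the main obstacle, as the larger Sylow $3$-subgroup removes the clean conjugacy structure that makes the other orders so transparent.
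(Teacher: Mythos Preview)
Your treatment of the cases $|G|\in\{24,48,96,120\}$ is correct and pleasantly streamlined. The paper argues these cases somewhat differently: rather than counting via $n_3(G)\cdot|N_G(P)|=|G|$, it shows that when $|P|=3$ the centraliser $C_G(P)$ must contain both a $C_4$ and a $C_2\times C_2$ (from $C_{12}$ and $C_2\times C_2\times C_3$), forcing $|C_G(P)|\geq 24$ and $|N_G(P)|\geq 48$, and then uses the self-normalising Sylow $3$-subgroup of $A_4$ to get $|G|\geq |U\,N_G(P)|\geq 192$. This kills all four values at once. Your route via $n_3\geq 4$ reaches the same conclusion with a slightly weaker normaliser bound and separate endgames for $96$ and $120$; both approaches are fine.

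The genuine gap is the case $|G|=72$. You correctly isolate it as the main obstacle, and your reductions (Sylow $3$-subgroup elementary abelian, not normal, $|N_G(E)|=18$) are sound. But the sentence ``a direct analysis of the few possible structures of $N_G(E)$ \dots\ completes the exclusion'' is not a proof, and it is not clear that analysing $N_G(E)$ is even the right lever: the embedded $C_{12}$, $D_6$, $C_2\times C_2\times C_3$ constrain centralisers of individual $C_3$'s, not the normaliser of the full $C_3\times C_3$. More seriously, you omit $Q_3$ from your list of constraints, and in the paper's argument $Q_3$ is essential. The paper proceeds by finding $x$ of order $3$ with $|C_G(x)|\geq 36$ (from $C_2\times C_2\times C_3$ and the abelian Sylow $3$-subgroup), then uses $A_4$ and $C_{12}$ to force $x\in Z(G)$, and finally obtains a contradiction in $G/\langle x\rangle$ by playing $Q_3$ off against $A_4$: both survive modulo $\langle x\rangle$, and an order count forces their images to intersect in a subgroup of order $6$ inside $A_4$, which does not exist. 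Without $Q_3$ you have no analogue of this last step, and I do not see how your sketch closes.
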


\begin{proof} 
From Lemma \ref{lem4} we see that $|G| \le | S_3 \times S_4 | = 144$. On the other hand we note that $G$ contains subgroups isomorphic to $C_{12} \cong C_{3} \times C_{4} $ and $C_{2} \times C_{2} \times C_{3} $. In particular $G$ has subgroups isomorphic to $C_{4}$ and $C_{2}\times C_{2}$. Letting $S$ be a Sylow 2-subgroup of $G$  it follows from Sylow's Theorems that $S$ has subgroups isomorphic to both $C_{4}$ and $C_{2}\times C_{2}$. Thus $S$ cannot be cyclic and must have order at least 8.

Now let $P$ be a Sylow 3-subgroup of $G$. Then $P$ is non-trivial, so $|P| \ge 3$. Suppose that $|P|= 3$. Then all Sylow 3-subgroups of G are isomorphic to $C_{3}$. Since all Sylow 3-subgroups are conjugate in $G$, and since $G$ has subgroups isomorphic to $ C_{3} \times C_{4} $ and $ C_{2} \times C_{2} \times C_{3} $ we see that $C_{G}(P)$ has subgroups isomorphic to $C_{4}$ and $ C_{2} \times C_{2} $.

Letting $S_{1}$ be a Sylow 2-subgroup of $C_{G}(P)$, and arguing as above, we see that $|S_{1}| \ge 8$. In addition $G$ has a subgroup isomorphic to $D_{6}$ so there must be a 2-element in $G$ that normalizes, but does not centralize, $P$. It follows that 2 is a divisor of $ |N_{G}(P) : C_{G}(P)|$. Since $P \leqslant C_{G}(P) $ we have
\[
 	|N_{G}(P)| \ge 2|C_{G}(P)| \ge  2.8.3 = 48.   
\]
Furthermore $G$ has a subgroup, $U$, which is isomorphic to $A_{4}.$ Since all Sylow 3-subgroups are conjugate in $G$ we may assume that  $ P  \leqslant U $.   However the Sylow 3-subgroups of  $A_{4}$  are self-normalizing in  $A_{4},$  so we have   $ U \cap N_{G}(P)  =  P $ and it follows that
\[ 
	|G| \ge  |U N_{G}(P)| 
		=    \frac{|U||N_{G}(P)|}{|U \cap N_{G}(P)|}  
		\ge  \frac{12.48}{3}
		=    192, 
\]
which contradicts the minimality of $|G|$.

Thus we see that $|P| \ge 9$.  If $|P| \ge 27$ then $|G| \ge 8.27 = 216$, which is ruled out, so $|P| = 9$. Hence $P$ is isomorphic either to $ C_{9} $ or $ C_{3} \times C_{3} $. If $ P \cong C_{9} $ then $P$ has a unique subgroup of order 3 and, by Sylow's Theorems, all subgroups of order 3 in $G$ are conjugate in $G$. Letting $ P_{1} $ be the unique subgroup of order 3 in $P$, we can use the argument outlined above to show that $ |N_{G}(P_{1})| \ge 48 $ and $ |G| \ge 192$. Therefore  we have $ P \cong C_{3} \times C_{3}$. Now, since $ |P| = 9 $ and $ |S| \ge  8$, we see that  $G$ is a multiple of 72. But since $ |G| \le 144 $ this means that $ |G| = 144 $ or $ |G| = 72 $.

Suppose that $ |G| = 72$. Then, since $G$ has a subgroup isomorphic to $ C_{2} \times C_{2} \times C_{3} $, there must be an element $x$ of order 3 in $G$ such that $ \langle x \rangle$ is centralized by a subgroup isomorphic to $ C_{2}\times C_{2} $. In addition the Sylow 3-subgroups of $G$ are abelian so $ \langle x \rangle$ is centralized by a subgroup of order 9. Hence $ |C_{G}(x)| \ge 2^{2}.3^{2} = 36 $. Therefore we have either $ C_{G}(x) = G $, whence $ x  \in Z(G) $, or $ |G : C_{G}(x)| = 2 $ and $ C_{G}(x)  \unlhd G$. In the latter case we have
\[
G/C_{G}(x) \cong C_{2}.
\]
Therefore every element of order 3 in $G$ is contained in $ C_{G}(x) $. For $U$ as above with $ U \cong A_4 $, we see that, since $U$ is generated by elements of order 3, $ U \leqslant C_{G}(x) $. Now $ Z(U) =1$, so  $ x \notin U $. By comparison of orders we then have
\[ 
C_{G}(x) = \langle x \rangle \times U.
\]
Since $G$ has a subgroup isomorphic to $ C_{12} \cong C_{3} \times C_{4} $ we see that there is an element of order 3, say $ x_{1} \in G $, that is centralized by an element of order 4, say $ y \in G $. Since  $ G/C_{G}(x) \cong C_{2} $ we see that  $ \langle x_{1}, y^{2} \rangle  \leqslant  C_{G}(x)  \leqslant  \langle x \rangle \times U $. Thus $ x_{1} $ is an element of order 3 in  $ \langle x \rangle \times U $ ($ \cong C_3 \times A_4 $) that is centralized by the element $ y^{2} $, which has order 2 and is contained in $U$. It follows that $ x_{1} \in \langle x \rangle $  and so $ \langle x_{1} \rangle = \langle x \rangle $. But then $x$ is also centralized by $ y \notin C_{G}(x) $, which is a contradiction. Thus we conclude that $ x \in Z(G) $.

Now $G$ has a subgroup $Q$, say, which is isomorphic to $ Q_{3} $. Since the Sylow 3-subgroup of $Q$ is not central in $Q$, we see that $ Q \cap \langle x\rangle = 1$. Hence
\[
Q\langle x \rangle/\langle x \rangle \cong Q \cong Q_{3},
\]
and likewise we have
\[
U\langle x \rangle/\langle x \rangle \cong U \cong A_{4}.
\]
Hence
\begin{align*} 
	|G/\langle x\rangle|   &\ge   |(Q\langle x\rangle/\langle x\rangle)(U\langle x\rangle/\langle x\rangle)| \\[4pt]
	 &=  \frac{|(Q\langle x \rangle/\langle x \rangle)||(U\langle x \rangle/\langle x \rangle)|}{|(Q\langle x \rangle/\langle x \rangle) \cap (U\langle x \rangle/\langle x \rangle)|}\\[4pt]
	& =  \frac{|Q_{3}||A_{4}|}{|(Q\langle x \rangle/\langle x \rangle) \cap (U\langle x \rangle/\langle x \rangle)|}.
\end{align*}
Thus
\[
\frac{72}{3} = 24 \ge \frac{144}{|(Q\langle x \rangle/\langle x \rangle) \cap (U\langle x \rangle/\langle x \rangle)|},
\]
so it follows that $ |(Q\langle x \rangle/\langle x \rangle) \cap (U\langle x \rangle/\langle x \rangle)| \ge 6 $.
Since $ Q\langle x \rangle/\langle x \rangle $ and $ U\langle x \rangle/\langle x \rangle $ are non-isomorphic groups, both of order 12, we conclude: 
\[
|(Q\langle x \rangle/\langle x \rangle) \cap (U\langle x \rangle/\langle x \rangle)| = 6.
\]
In particular $ U\langle x \rangle/\langle x \rangle $ has a subgroup of order 6. But $ U\langle x \rangle/\langle x \rangle $ is isomorphic to $ A_{4} $ which does not have any subgroups of order 6 and a contradiction ensues. Hence we conclude that $ |G| = 144$.
\end{proof}

The final result in the present section shows that $S_3 \times S_4$ is the unique group of minimal order in which all groups of order 12 can be embedded.

\begin{theorem}\label{12S3SX4}
Let $G$ be a group of order 144 in which all groups of order 12 can be embedded. Then $ G \cong S_3 \times S_4 $.
\end{theorem}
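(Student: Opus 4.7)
The plan is to exhibit $G$ as a direct product $G \cong N \times H$ with $N \cong S_3$ and $H \cong S_4$, where $N$ will be taken to be the kernel of the natural action of $G$ by conjugation on its four Sylow $3$-subgroups.

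By the previous lemma a Sylow $3$-subgroup $P$ of $G$ satisfies $P \cong C_3 \times C_3$, so $n_3(G) \in \{1, 4, 16\}$. To rule out $n_3(G) = 1$: if $P \unlhd G$ then Schur--Zassenhaus yields a complement $Q$ of order $16$ with $G = PQ$ and $P \cap Q = 1$, and a short commutator calculation shows that any $c \in P$ normalising a subgroup $K \leqslant Q$ must centralise $K$. Applied to the Sylow $3$-subgroup $\langle c\rangle$ of the embedded $A_4$ and its Klein four $V_4$ (placed in $Q$ by choosing $Q$ to contain it), this forces $A_4$ to be abelian --- a contradiction. To rule out $n_3(G) = 16$: here $N_G(P) = P$, and since $P$ is abelian, $C_G(P) = P = N_G(P)$, so Burnside's $p$-complement theorem yields a normal Sylow $2$-subgroup $K \unlhd G$ of order $16$. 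Every $2$-element of $G$ then lies in $K$; in particular the order-$4$ generator $b$ of the embedded $Q_3$. After conjugating $Q_3$ so that its Sylow $3$-subgroup $\langle a^2\rangle$ lies in $P$, the defining relation $bab^{-1}=a^{-1}$ yields $a^2 b a^{-2} = ba^2$. This element cannot lie in $K$ --- for otherwise $a^2 = b^{-1}(ba^2) \in K$ would be a $3$-element in a $2$-group --- yet $K \unlhd G$ forces all $G$-conjugates of $b$ into $K$. Contradiction. Hence $n_3(G) = 4$.

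With $n_3(G) = 4$, conjugation on the set of Sylow $3$-subgroups yields a homomorphism $\phi:G\to S_4$ with transitive image. The four Sylow $3$-subgroups of the embedded $A_4 \leqslant G$ lie in four \emph{distinct} Sylow $3$-subgroups of $G$, for otherwise some Sylow $3$-subgroup $P$ of $G$ contains all eight order-$3$ elements of $A_4$, forcing $|A_4\cap P|\ge 9$ while $|A_4\cap P|$ divides $\gcd(|A_4|,|P|)=3$. Hence $\phi$ restricted to $A_4$ is faithful and transitive, so $\phi(A_4) = A_4$ and $\phi(G)\in\{A_4,S_4\}$. A case-by-case analysis based on the five possible isomorphism types of a hypothetical $\ker\phi$ of order $12$, using the embeddings of $Q_3$ and $D_6$ to derive contradictions in each case, rules out $\phi(G) = A_4$. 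So $\phi$ is surjective, $N := \ker\phi$ has order $6$, and its unique Sylow $3$-subgroup is characteristic in $N$ and hence normal in $G$. To show $N\cong S_3$ rather than $C_6$: if $N$ were abelian, its Sylow $2$-subgroup $R$ would be characteristic in $N$ and hence normal in $G$, and then $R \leqslant Z(G)$ since $\bmaut(C_2) = 1$; the quotient $\bar G := G/R$ of order $72$ would then contain all five groups of order $12$, verified by checking $U\cap R = 1$ for each embedded order-$12$ subgroup $U$ of $G$, contradicting the preceding lemma.

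Finally, since $Z(N) = Z(S_3) = 1$, the conjugation action yields an injection $G/C_G(N) \hookrightarrow \bmaut(S_3) \cong S_3$, so $|C_G(N)|\ge 24$. Combining $N\cap C_G(N) = Z(N) = 1$ with $|N\cdot C_G(N)|\le |G| = 144$ gives $|C_G(N)| = 24$. As $[N,C_G(N)] = 1$ and $N\cap C_G(N) = 1$, we conclude $G = N\times C_G(N)$; moreover $C_G(N) \cong G/N \cong S_4$ via $\phi$, giving $G\cong S_3\times S_4$. The main obstacles will be the case-by-case analysis ruling out $\phi(G) = A_4$ in the second paragraph, and the verification that $\bar G$ contains all five groups of order $12$ when $N\cong C_6$, which requires some care for the embedded $Q_3$ (in particular to handle the case $R = Z(Q_3)$).
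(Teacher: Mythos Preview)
Your approach via the conjugation action on Sylow $3$-subgroups is genuinely different from the paper's, which proceeds through the Fitting subgroup: the paper pins down $O_3(G)\cong C_3$ and then whittles $O_2(G)$ down from the possible $2$-groups of order $\leq 8$ to $C_2\times C_2$, eventually building $O^{3'}(G)=O_3(G)\times A_4$ and extracting the $S_3\times S_4$ decomposition from the action on $F(G)$. Your route is more conceptual once $n_3(G)=4$ is established (your arguments for $n_3\ne 1$ and $n_3\ne 16$ are correct, as is the faithfulness of $\phi|_{A_4}$ and the final centraliser splitting), and it explains \emph{why} an $S_4$ quotient must appear. The Fitting approach, by contrast, avoids the two case analyses you flag, at the cost of a longer structural build-up.

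The first acknowledged gap (ruling out $\phi(G)=A_4$) is genuine work but tractable: with $\ker\phi$ of order $12$ you have $G=N\rtimes A_4$, and since $A_4$ has no quotient of order $2$ its action on $N$ is tightly constrained; for instance $N\cong C_{12}$ forces $G\cong C_{12}\times A_4$, which has no element of order $4$ inverting an element of order $3$, so $Q_3$ fails to embed. The second gap is more serious. Your proposed method for $N\cong C_6$ --- passing to $G/R$ and invoking the preceding lemma --- requires $U\cap R=1$ for each embedded order-$12$ group $U$, but this can fail not only for $Q_3$ (as you note) but equally for $D_6$, $C_{12}$, and $C_2\times C_6$, since each has nontrivial centre and $R$ is the \emph{unique} central involution of $N$, not of $G$. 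In the case $R=Z(Q_3)$ you get $Q_3/R\cong S_3$, and there is no evident mechanism for producing a second embedded $Q_3$ disjoint from $R$. You will likely need to argue directly in $G$ here --- for example by analysing $C_G(T)$ where $T$ is the Sylow $3$-subgroup of $N$, and using that the embedded $A_4$ centralises $T$ (since $A_4$ has no index-$2$ subgroup) to force $C_G(T)\supseteq R\times T\times A_4$ --- rather than via the quotient $G/R$.
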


\begin{proof}

Let $P$ be a Sylow 3-subgroup of $G$ and let $U_1$, $U_2$ and $U_3$ be subgroups of $G$ such that $ U_1 \cong C_{12} \cong C_3  \times C_4 $, $ U_2 \cong C_2 \times C_2 \times C_3 $ and $U_3 \cong A_4$. By Sylow's Theorems we may assume that $ P \cap U_i \cong C_3 $, for $ i = 1,2,3 $. Now $|P| = 9$, so we have $ P \cong C_3 \times C_3 $ or $ P \cong C_9 $. 

If $ P \cong C_9 $ then $P$ has a unique subgroup of order 3, $P_1$ say, so $U_i \cap P = P_1$ ($ i = 1,2,3)$. Since $U_1$, $U_2$ and $P$ are abelian  we have $ \langle \ U_1, \ U_2, \ P \ \rangle \leqslant C_G(P_1) $. Then the Sylow 2-subgroups of $ C_G(P_1)$ have subgroups isomorphic to both $C_4$ and $C_2 \times C_2$, and so have order at least 8. Hence $ |C_G(P_1)| \geq 8 \times 9 = 72 $ so $C_G(P_1)$ has index at most 2 in $G$, and it follows that $C_G(P_1)$ is normal in $G$. Now $P_1 \leqslant Z(C_G(P_1)) $ so $ P_1 \leqslant O_3(G) $. Since the Sylow 3-subgroups of $G$ are cyclic, it follows that $P_1$ is the unique subgroup of order 3 in $G$. But $U_3$ is generated by elements of order 3 and the contradiction $ A_4 \cong U_3 \leqslant P_1 \cong C_3 $ follows. Hence $ P \cong C_3 \times C_3 $.

Now $G$ is soluble (e.g. by Burnside's Theorem), so $F(G)$ is non-trivial. Since $G$ is a $\{2, 3\}$-group we have $ F(G) = O_2(G) \times O_3(G) $. If $ | O_2(G) | = 16 $ then $G$ has a normal Sylow 2-subgroup. But then $D_6$ and $Q_3$ cannot be embedded in $G$. Thus we have $ |O_2(G)| \leq 8 $. Similarly, if $ |O_3(G)| = 9 $ then $A_4$ cannot be embedded in $G$, so $ |O_3(G)| \leq 3 $. As no 2-group of order 8 or less has an automorphism group in which $C_3 \times C_3 $ can be embedded, we see that $ C_P(O_2(G)) \neq 1 $. Since $P$ is abelian and $O_3(G) \leqslant P $, we have $ C_P( O_2 (G) ) \leqslant C_P ( O_3(G) \times O_2(G) ) = C_P( F(G) ) \leqslant F(G) $. Thus $ 1 \neq C_P( O_2(G) ) \leqslant O_3(G) $ and it follows that $ O_3(G) \cong C_3 $.

If $ 3 \nmid |\bmaut(O_2(G) ) | $ then, since $ F(G) = O_3(G) \times O_2(G) $ and $ O_3(G) \cong C_3 $, we have $ 3 \nmid | \bmaut(F(G) )| $ and the contradiction $ P \leqslant C_G( F(G) ) \leqslant F(G) $ ensues. Hence $ 3 \mid | \bmaut( O_2(G) ) | $ so, in particular, $O_2(G) $ is non-cyclic. If $O_2(G)$ is non-abelian then $|O_2(G)| = 8$ and $ O_2(G) \cong D_4 $ or $ O_2(G) \cong Q_2 $. If $ O_2(G) \cong D_4 $ then, since $ \bmaut(D_4) \cong D_4 $, the contradiction $ P \leqslant F(G) $ arises as above. If $O_2(G) \cong Q_2 $ then $ |F(G)| = 24 $ and since $ U_3 \cong A_4 $ we have, by comparison of orders, $ 1 \neq U_3 \cap F(G) $. By minimal normality we then have the contradiction:
\[ C_2 \times C_2 \cong U_3^{'} \leqslant O_2(G) \cong Q_2 .\]
Thus $O_2(G)$ is abelian and non-cyclic, and so is isomorphic either to $ C_2 \times C_4 $, \ $C_2 \times C_2 \times C_2 $ or $ C_2 \times C_2 $.

If $ O_2(G) \cong C_2 \times C_4 $ then $ 3 \nmid | \bmaut ( O_2 (G) ) | $, which is ruled out. If $ O_2(G) \cong C_2 \times C_2 \times C_2 $ we again see by minimal normality that $ U_3^{'} \leqslant O_2(G) $. We let $ \langle x \rangle $ be a Sylow 3-subgroup of $ U_3$ and apply Maschke's Theorem to see that there exists $ z \in O_2(G) $ such that $ \langle z \rangle $ is normalised by $x$ and such that $ O_2(G) = \langle z \rangle \times U_3^{'} $. Since $x$ also centralises $O_3(G)$ we have
\[ F(G) U_3 = O_3(G) \times \langle z \rangle \times U_3  \ ( \cong C_3 \times C_2 \times A_4 ) .\]
Now $ | G : F(G) U_3 | = 2 $, so $ F(G) U_3 \unlhd G $. Since $ \langle z \rangle $ is characteristic in $ F(G) U_3 $ we have $ \langle z \rangle \unlhd G $ and, in particular, $ z \in Z(G) $. In addition, since $ U_3^{'} = ( F(G) U_3 )^{'} $, we have $ U_3^{'} \unlhd G $.

Since $ D_6 $ can be embedded in $G$ there must be an element $ y \in G $ such that $ o(y) = 2 $ and $ y \notin O_2(G) $. By Maschke's Theorem $ O_3(G) O_2(G) / O_2(G) $ has a complement, $ W / O_2(G) $, in $ F(G) U_3 / O_2(G) $ that is normalised by $ \langle y \rangle O_2(G) / O_2 (G) $. Then $ W \unlhd G $ and $ W \cap O_3(G) \leqslant O_2(G) \cap O_3(G) = 1 $, so $ F(G) U_3 = W \times O_3(G) $ and it follows that $ W \cong \langle z \rangle \times U_3 $. Hence we may assume that $ \langle z \rangle \times U_3 $ is normal in $G$. But $ U_3 = O^{3^{'}}( \langle z \rangle \times U_3 ) $ is characteristic in $ \langle z \rangle \times U_3 $, so, in particular, we have $ U_3 \unlhd G $.

If  $y$ centralises $ U_3^{' } $ then the Sylow 2-subgroups of $G$ are elementary abelian, so $C_{12}$ cannot be embedded in $G$. Thus $y$ does not centralise $U_3^{'}$ and it follows that $ U_3 \langle y \rangle $ is isomorphic to $S_4$. Hence
\[ \langle z \rangle U_3 \langle y \rangle = \langle z \rangle \times U_3 \langle y \rangle \cong C_2 \times S_4 . \]
If $y$ centralises $O_3(G)$ then
\[ G = O_3(G) \times \langle z \rangle \times U_3 \langle y \rangle \cong C_3 \times C_2 \times S_4 .\]
Since $ G / O_3(G) U_3 $ has exponent 2 and $ O_3(G) U_3 / U_3^{'} \cong C_3 \times C_3 $ we see that if $ h \in G $ is such that $ o(h) = 4 $ then $ h^2 \in U_3^{'}$. Now $ Q_3$ can be embedded in $G$ so there exists $ g \in G $ such that $ o(g) = 4 $ and such that $g$ normalises, but does not centralise, a subgroup of order 3 in $G$. But $ g \in O^{2^{'}}(G) = \langle z \rangle \times U_3 \langle y \rangle \unlhd G$ so we may assume, without loss of generality, that $g$ normalises $ \langle x \rangle $. Then $g^2$ centralises $x$. But $ g^2$ is a non-trivial element of $U_3^{'}$ and a contradiction arises. Hence we may assume that $ y $ does not centralise $O_3(G)$. 

It follows that $ O_3(G) \langle y \rangle \cong S_3$, so $ O_3(G) \langle y \rangle \cap O_2(G) = 1 $ and $ O_3(G) \langle y \rangle O_2(G) / O_2(G) \cong S_3 $. Since $ U_3 \langle y \rangle \cong S_4 $, we now see that $ y O_2(G) $ inverts every non-trivial element of $ O_3(G) O_2(G) / O_2(G) \times U_3 O_2(G) / O_2(G) $ ($ \cong C_3 \times C_3 $) so, in particular, $C_6$ cannot be embedded in $ G / O_2(G) $. But for $ U_1 \cong C_{12} $ (as above) we see that $ U_1 \cap O_2(G) \cong C_2$ (since $ O_2(G) \cong C_2 \times C_2 \times C_2 $) and the contradiction $ U_1 O_2(G) / O_2(G) \cong C_6 $ arises. Thus $ O_2(G) $ cannot be isomorphic to $ C_2 \times C_2 \times C_2 $ and we conclude that $ O_2(G) \cong C_2 \times C_2 $.

Now $ U_3 \cong A_4 $ so $ U_3^{'}$ centralises $O_2(G)$. If $ U_3^{'} \cap O_2(G) = 1 $ then $ O_2(G) U_3^{'} = O_2(G) \times U_3^{'} \cong C_2 \times C_2 \times C_2 \times C_2 $ and the Sylow 2-subgroups of $G$ are elementary abelian. But then $C_{12}$ cannot be embedded in $G$, which is a contradiction. Hence $ U_3^{'} \cap O_2(G) \neq 1 $ so, by minimal normality, we have $ U_3^{'} = O_2(G) $. Since $F(G) = O_3(G) \times O_2(G) \cong C_3 \times C_2 \times C_2 $ we have 
\[ \bmaut( F(G) ) \cong \bmaut (C_3 ) \times \bmaut( C_2 \times C_2 ) \cong C_2 \times S_3 .\]
Now $F(G)$ is abelian so $ C_G(F(G)) = F(G) $. Then, by comparison of orders, we have
\[ G / F(G) \cong C_2 \times S_3 . \]
Since $ G / F(G) $ thus has a quotient group isomorphic to $ C_2 \times C_2 $ we see that $ | O^{3^{'}} (G) | \leq 36 $. On the other hand $ O_3(G) U_3 \leqslant O^{3^{'}} (G) $ and $ | O_3(G) U_3 | = 36 $, so it follows that $ O_3(G) U_3 = O^{3^{'}} (G) \unlhd G $. Furthermore, since $ O_3(G) \cong C_3 $ and $ U_3 \cong A_4 $, we see that $U_3$ centralises $O_3(G)$ so $ O^{3^{'}} (G) = O_3(G) \times U_3$ ($ \cong C_3 \times A_4 $).

Now $ O^{3^{'}} (G) / O_2(G) \cong C_3 \times C_3 $ and $G / O^{3^{'}} (G) $ is a 2-group so we apply Maschke's Theorem to see that $ O^{3^{'}} (G) / O_2(G) = O_3(G) O_2(G) / O_2(G) \times K / O_2(G) $, where $ K / O_2(G) \cong C_3 $ and $ K \unlhd G$. We have $ O_3(G) \cap K \leqslant O_3(G) \cap O_2(G) = 1 $ and $ O^{3^{'}} (G) = O_3(G) K $. Hence $ O^{3^{'}} (G) = O_3(G) \times K $ and $ K \cong O^{3^{'}} (G) / O_3(G) \cong A_4 $. Thus we may assume, without loss of generality, that $ U_3 = K \unlhd G$. We let $ W = C_G( O_3(G) ) $. Since $ G / F(G) \cong C_2 \times S_3 $ we have $ O_3(G) \not\leqslant Z(G) $, and hence $ G / W \cong C_2 $ and $ |W| = 72 $. Now $ U_3 \leqslant W $ and $ U_3 \cong A_4 $. Thus $ C_W(U_3) \cap U_3 = 1 $ and $ W / C_W( U_3 ) $ is isomorphic either to $A_4$ or $S_4$. Since $|W| = 72$ we must then have $ |C_W( U_3 )| = 6 $ or $ |C_W( U_3 )| = 3 $.

If $|C_W( U_3 )| = 6 $ then, since $ O_3(G) $ centralises $U_3$, we have $ C_W(U_3) \cong C_6 $ and we may let $ C_W( U_3 ) = O_3(G) \times \langle x \rangle $, where $ o(x) = 2 $. Then
\[ W = C_W( U_3 ) \times U_3 = O_3(G) \times \langle x \rangle \times U_3 , \]
so $ \langle x \rangle \leqslant O_2(W) \leqslant O_2(G) = U_3^{'} $, which is a contradiction. Hence $ |C_W( U_3) | = 3 $ so, in particular, $ C_W( U_3 ) = O_3(G) $.

Since $ | W : O_3(G) \times U_3 | = 2 $ there exists a 2-element $ y \in W \backslash ( O_3(G) \times U_3 ) $ such that $ W = ( O_3 (G) \times U_3 ) \langle y \rangle $. Now $y$ centralises $O_3( G )$ and $ y^2 \in O_3(G) \times U_3 $. But $y^2$ is a 2-element, so $ y^2 \in U_3^{'} $. Thus $ U_3 \langle y \rangle $ is a group of order 24 and 
\[ W =  O_3 (G) \times U_3 \langle y \rangle. \]
We further have $ U_3 \langle y \rangle \cong W / O_3(G) = W / C_W( U_3 ) \cong S_4 $. In addition we see that $ U_3 \langle y \rangle = O^{2^{'}} (W) \unlhd G $. We let $ C = C_G( U_3 \langle y \rangle ) $. Then $ C \cap U_3 \langle y \rangle = 1 $ and, since $S_4$ is complete, we have $ G = C \times U_3 \langle y \rangle $. If follows that $ |C| = 6 $. If $ C \cong C_6 $ then $ C_2 \cong O_2(C) \leqslant O_2(G) = U_3^{'} $, which is again a contradiction. Thus $ C \cong S_3 $ and so $ G  = C \times U_3 \langle y \rangle  \cong S_3 \times S_4 $, as desired.
\end{proof}

Bearing Lagrange's Theorem in mind, it is of interest to consider whether the minimal order of a group in which a given collection of groups can be embedded will always be a divisor of the order of any other group in which the same collection of groups can be embedded. The following example shows that this will not always be the case.

\begin{example}
\end{example}

We let
\[ G = \langle x \rangle \times \langle y \rangle \times S_4 , \]
where $ \langle x \rangle \cong C_2 $ and $ \langle y \rangle \cong C_4$. The following isomorphisms can be confirmed: 
\begin{eqnarray*}
C_{12} & \cong & \langle \ y, \ (123) \ \rangle  ;\\
C_2 \times C_2 \times C_3  & \cong & \langle \ x, \ y^2, \ (123) \ \rangle;\\
 D_6  & \cong & \langle x\rangle \times S_3 ;\\
 Q_3 & \cong & \langle \ y(12), \ (123) \ \rangle .
\end{eqnarray*}
As $A_4$ is a subgroup of $S_4$, we see that all groups of order 12 can be embedded in $G$. We have $ |G| = 192 $ and hence 144, which is the minimal order of a group in which all groups of order 12 can be embedded,  is not a divisor of $|G|$.

\section{Minimal embeddings of groups of order $n$ for $ n \leq 15 $}\label{ordernleq15}

Having dealt with the cases $n = 8$ and $n = 12$, we present the groups of minimal order in which all groups of order $n$ can be embedded for $ n \leq 15 $. For reference, we recall that the groups of order 15 or less are as follows:

\begin{table}[H]
\begin{center}
\begin{tabular}{cccc}
\hline
$ n $&Groups of order $ n $& $ n $&Groups of order $ n $\\
\hline
\rule{0pt}{3ex}
$1$&$ C_{1}$&$ 9 $&$ C_{9}, C_{3} \times C_{3} $\\
$ 2 $&$ C_{2} $&$ 10 $&$ C_{10}, D_{5} $\\
$ 3 $&$ C_{3} $&$ 11 $&$ C_{11} $ \\
$ 4 $&$ C_{4}, C_{2} \times C_{2} $&$ 12 $&$ C_{12}, C_{2} \times C_{2} \times C_{3}, D_{6}, Q_{3}, A_{4} $\\
$ 5 $&$ C_{5} $&$ 13 $&$ C_{13} $\\
$ 6 $&$ C_{6}, D_{3} $&$ 14 $&$ C_{14}, D_{7} $\\
$ 7 $&$ C_{7} $&$ 15 $&$ C_{15} $\\
$ 8 $&$ C_{8},   C_{2} \times C_{4}, C_{2} \times C_{2}\times C_{2} , D_{4}, Q_{2} $\\
\hline
\end{tabular}
\caption{Groups of order 15 or less}\label{|G|leq15}
\end{center}
\end{table}

In order to provide a description of the minimal groups for $ n = 9 $ we let $H_3$ denote the non-abelian group of order 27 and exponent 9, that is $ H_{3} = \langle \ a, \ b \mid a^{9} = 1 = b^{3}, \  b^{-1}ab = a^{4} \ \rangle $. Taking the results from Sections \ref{order8} and \ref{order12}, and noting that the remaining cases are quite straightforward, our next proposition gives the groups of least order in which all groups of order $n$ can be embedded, for $ 1 \leq n \leq 15$.

\begin{theorem}\label{embednleq15}

Let  $ 1 \leq n \leq 15$ and let $G$ be a group of minimal order in which all groups or order $n$ can be embedded. Then the isomorphism class of $G$ is as in the following table:

\def\TTT{ \begin{tabular}{@{}c@{}}Group(s) of\\[-3pt]minimal order\\ \end{tabular}}

\begin{table}[H]
\begin{center}
\begin{tabular}{cccccccc}
\hline
$n$&\TTT&& $n$&\TTT&& $n$&\TTT\\
\hline
\rule{0pt}{3ex}
$1$ & $C_{1}$ && $6$ & $D_{6}$ && $11$ & $C_{11}$ \\ 
$2$ & $C_{2}$ && $7$ & $C_{7}$ && $12$ & $ S_3 \times S_{4}$\\
$3$ & $C_{3}$ && $8$ & $ C_{2} \times H_1, \ H_2  $ && $13$ & $C_{13}$\\
$4$ & $C_{2} \times C_{4}, \ D_{4}$ && $9$ & $C_{3} \times C_{9}, \ H_{3}  $ && $14$ & $D_{14} $\\
$5$ & $C_{5}$ &&  $10$ & $D_{10}$ && $15$ & $C_{15}$\\
\hline
\end{tabular}
\end{center}
\caption{Group(s) of least order containing all groups of order $n$, for $ 1 \le n \le 15 $ }\label{MinGroups1}
\end{table}

\end{theorem}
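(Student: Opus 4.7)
The plan is to handle the cases one by one, since the groups of prime order give trivial answers and the two complicated cases ($n=8$ and $n=12$) are already settled. I will organise the cases by the structure of the collection of groups of order $n$.

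\textbf{Trivial (prime) cases.} For $n\in\{1,2,3,5,7,11,13\}$ there is a unique group of order $n$ (namely $C_n$), which must embed in itself and hence is the unique minimal group. These require no argument beyond quoting Table \ref{|G|leq15}. Similarly $n=15$ is settled because there is only one group, $C_{15}$.

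\textbf{Prime-power cases ($n=4,9$).} For $n=4$ the two groups $C_4$ and $C_2\times C_2$ force, by Lemma \ref{pbound}, $|G|\geq 2^{3}=8$. Among the five groups of order $8$ listed in Table \ref{|G|leq15}, the cyclic group $C_8$ and the quaternion group $Q_2$ each have a unique subgroup of order $2$, so cannot contain $C_2\times C_2$; while $C_2^3$ has exponent $2$, so cannot contain $C_4$. This leaves exactly $C_2\times C_4$ and $D_4$, both of which visibly contain both $C_4$ and $C_2\times C_2$. For $n=9$ the bound becomes $|G|\geq 27$, and one runs through the five groups of order $27$: $C_{27}$ and $C_3^3$ are eliminated (no $C_3\times C_3$ in the former, no element of order $9$ in the latter two of exponent $3$), leaving $C_3\times C_9$ and $H_3$. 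For each of these I would exhibit the requisite copies of $C_9$ and $C_3\times C_3$ explicitly (in $H_3$, take $\langle a\rangle$ and $\langle a^3,b\rangle$, noting that $b$ centralises $a^3$ because $4\equiv 1 \pmod 3$).

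\textbf{Cases $n=6,10,14$ (dihedral pattern).} These have exactly two groups of order $n$, namely $C_n$ and $D_{n/2}$. Since $C_n \leq G$ forces $n\mid |G|$, and the two groups of order $n$ are non-isomorphic, $|G|>n$; hence $|G|\geq 2n$. The group $D_n$ (of order $2n$) works, because $D_n\cong C_2\times D_{n/2}$ when $n/2$ is odd (giving the $C_n$ and $D_{n/2}$ subgroups directly). To conclude minimality I would verify that no other group of order $2n$ works: for $n=6$, ruling out $C_{12}$, $C_2\times C_6$ (abelian, no $S_3$), $A_4$ (no element of order $6$), and $Q_3$ (unique involution, so no $D_3$); the analogous short check disposes of $n=10$ and $n=14$. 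The main obstacle here is just checking uniqueness, which is routine given the small number of groups of each order.

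\textbf{Cases $n=8,12$.} These are quoted from Sections \ref{order8} and \ref{order12}, which establish the isomorphism types listed and their minimality directly. Assembling the above observations into Table \ref{MinGroups1} completes the proof. The only step with any real content is $n=9$, where one must verify that $H_3$ (the non-abelian group of exponent $9$) actually contains an elementary abelian subgroup $C_3\times C_3$; I expect this to be the one place where a short computation is unavoidable, but the defining relation $b^{-1}ab=a^4$ immediately gives the required centraliser since $4\equiv 1 \pmod 3$ when applied to $a^3$.
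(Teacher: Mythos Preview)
Your proposal is correct and matches the paper's approach: the paper itself gives no proof beyond ``Taking the results from Sections \ref{order8} and \ref{order12}, and noting that the remaining cases are quite straightforward,'' and you have filled in those straightforward cases accurately. One small point of care in the ``analogous short check'' for $n=10$: among the five groups of order $20$ there is the Frobenius group $C_5\rtimes C_4$, which does contain $D_5$ (via the index-$2$ subgroup $C_5\rtimes C_2$) but contains no element of order $10$, so it is ruled out by the $C_{10}$ requirement rather than the $D_5$ one; your framework handles this, but the elimination is not quite parallel to the $n=6$ list you wrote out.
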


\section{Embeddings of groups of order $p^3$, $p$ odd}\label{p-odd}

Theorem \ref{embednleq15} tells us in particular that the lower bound given by Lemma \ref{pbound} is attained for $n = $ 4, 8 and 9.                           The following result shows that this bound is not attainable for $n = p^k $, where $p$ is an odd prime and $ k \geq 3$.

\begin{lemma}\label{p^knotembed}

Let $p$ be an odd prime and let $k \geq 3 $. If $G$ is a group of order $ p^{2k-1}$ then not all groups of order $p^k$ can be embedded in $G$.

\end{lemma}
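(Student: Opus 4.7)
The plan is to suppose, toward a contradiction, that every group of order $p^k$ embeds in some group $G$ of order $p^{2k-1}$, and to expose so much rigidity in $G$ that a specific non-abelian group of order $p^k$ cannot fit. From the hypothesis, $G$ contains both a cyclic subgroup $P_1 = \langle x \rangle \cong C_{p^k}$ and an elementary abelian subgroup $P_2 \cong (C_p)^k$. The elementary order estimate preceding Lemma~\ref{pbound} then forces $|P_1 \cap P_2| = p$ and $P_1 P_2 = G$; in particular $P_1 \cap P_2 = \langle x^{p^{k-1}} \rangle$.

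The crucial structural step is to identify $\Omega_1(G)$, the subgroup of $G$ generated by elements of order dividing $p$. Since $P_2$ has exponent $p$ we have $P_2 \leq \Omega_1(G)$, whence $|\Omega_1(G)| \geq p^k$. For the reverse bound, $P_1$ is cyclic of order $p^k$, so $\Omega_1(G) \cap P_1 = \Omega_1(P_1) = \langle x^{p^{k-1}} \rangle$ has order $p$; applying the product formula to the subgroup $\Omega_1(G) P_1$ of $G$ yields
\[ |\Omega_1(G)| \leq \frac{|G| \cdot |\Omega_1(G) \cap P_1|}{|P_1|} = \frac{p^{2k-1} \cdot p}{p^k} = p^k. \]
Therefore $\Omega_1(G) = P_2$, and in particular every subgroup of $G$ of exponent $p$ is abelian.

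To finish I would exhibit a group of order $p^k$ that is non-abelian and of exponent $p$. For $p$ odd and $k=3$, the Heisenberg (extra-special) group $\mathrm{He}_p$ of order $p^3$ and exponent $p$ has the required properties, with the exponent computation depending on $p$ being odd so that $p \mid \binom{p}{2}$. For $k>3$, the direct product $\mathrm{He}_p \times (C_p)^{k-3}$ remains non-abelian of order $p^k$ and exponent $p$. If such a group embedded in $G$, it would sit inside $\Omega_1(G) = P_2$, contradicting the fact that $P_2$ is abelian, and the lemma follows.

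The main obstacle is the $\Omega_1$ computation: it is short, but it depends crucially on the precise intersection $\Omega_1(P_1)$ and on $G$ being expressible as $P_1 P_2$ via the tight bound in Lemma~\ref{pbound}. Once $\Omega_1(G) = P_2$ is established the contradiction is immediate, provided one remembers that the hypothesis $p$ odd is essential for the existence of a non-abelian group of order $p^k$ of exponent $p$.
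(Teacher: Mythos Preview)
There is a genuine gap in your argument at the step where you claim $\Omega_1(G)\cap P_1=\Omega_1(P_1)$. Recall that $\Omega_1(G)$ is the subgroup \emph{generated} by the elements of order $p$; it is not, in general, equal to the set of such elements, and it may well contain elements of order greater than $p$. Thus an element of $P_1$ lying in $\Omega_1(G)$ need not have order dividing $p$, and the inclusion $\Omega_1(G)\cap P_1\subseteq\Omega_1(P_1)$ is precisely what is at issue. (For instance, in the dihedral group $D_4$ one has $\Omega_1(D_4)=D_4$, so its intersection with the cyclic subgroup of order $4$ is that whole subgroup, not just its socle; analogous odd-prime examples exist.) Consequently the product-formula bound $|\Omega_1(G)|\le p^k$ is not established, and the identification $\Omega_1(G)=P_2$ remains unproven.

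What is really needed is to show that every element of order $p$ in $G$ already lies in $P_2$ (or at least in some abelian subgroup). This requires structural input that your argument does not supply: one must first know that $P_2\unlhd G$. The paper obtains this from an external result (\cite{McCann}, Lemma~2.5) and then proceeds quite differently from you. For $k\ge 4$ it does not use an exponent-$p$ group at all, but observes that $G/P_2$ is cyclic and that a copy of $C_{p^2}\times C_{p^{k-2}}$ would force a non-cyclic image in $G/P_2$. For $k=3$ it exploits the fact that a Sylow $p$-subgroup of $GL(3,p)$ has exponent $p$ to show that $x^p$ centralises $P_2$, whence every element of order $p$ lies in the abelian group $\langle x^p\rangle P_2$; this is close in spirit to your intended conclusion, but the route to it is the missing part of your proof. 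Your uniform choice of test group (the Heisenberg group times an elementary abelian factor) is attractive, and the argument can be completed along these lines once normality of $P_2$ and centrality of $x^{p^{k-2}}$ on $P_2$ are established, but as written the key $\Omega_1$ computation does not stand on its own.
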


\begin{proof}

We suppose that $ C_{p^k} $ and the elementary abelian group of rank $k$ can be embedded in $G$ and let $A $ and $B$ be subgroups of $G$ such that $ A = \langle x \rangle \cong C_{p^k} $ and $B$ is elementary abelian of rank $k$.                   Since $A$ is cyclic and $ |G| = p^{ 2k - 1 } $ we must have $ | A \cap B | = p $ and $ G = AB $.                    We can then apply \cite{McCann} Lemma 2.5  to see that $ B =  \Omega_1(A) B \unlhd G $. In particular we have $ G / B \cong A / ( A \cap B ) \cong C_{p^{k-1}} $. If $ k \geq 4 $ and $U$ is a subgroup of $G$ such that $ U \cong C_{p^{2}} \times C_{p^{k-2}} $, then $ U \cap B $ is elementary abelian, so $ UB / B \cong U / ( U \cap B ) $ has a subgroup isomorphic to $ C_p \times C_p $. But $ G / B $ is cyclic and a contradiction ensues. Hence not all groups of order $ p^k $ can be embedded in $G$.

If $ k = 3 $ then $ B \cong C_p \times C_p \times C_p $ and the Sylow p-subgroups of $ \bmaut(B) $ are isomorphic to those of $ GL( 3, p ) $. Since $p$ is odd, we see that 
$ P = \{ \ \left( \begin{array}{lll}
1 & 0 & 0\\
a & 1 & 0\\
b & c & 0\\
\end{array} \right) \ \mid \ a, \ b, \ c \in GF( p) \ \} $
is a Sylow $p$-subgroup of $ GL( 3, p )$. $P$ is non-abelian, of order $p^3$, and has exponent $p$. Hence $ G / C_G(B) $ has exponent $p$. Since $B$ is abelian and $ G / B \cong C_{p^2} $ we then have $ | C_G(B) : B | \ge p $, so $ \langle x^p \rangle \leqslant C_G(B) $. Now $ \langle x^p \rangle B / B $ is the unique subgroup of order $p$ in $ G / B  $, so all elements of order $p$ in $G$ are contained in $ \langle x^p \rangle B $. But $ \langle x^p \rangle B $ is abelian, since $x^p$ centralises B. Hence the non-abelian group of order $p^3$ and exponent $p$ cannot be embedded in $G$.

\end{proof}

Lemma \ref{p^knotembed} shows that if $p$ is a odd prime and $G$ is a group of minimal order in which all groups of order $p^3$ can be embedded, then $ |G| \geq p^6 $. Our next example shows that this latter bound can be attained.

\begin{example}
\end{example}

Let $p$ be an odd prime and let $ A = \langle x \rangle $ be cyclic of order $p^3$. Let $B$ be the non-abelian group of order $p^3$ and exponent $p$ given by
\[ B = \langle \ a, \ b, \ z_1 \mid a^p = b^p = z_1^p = 1 , \ [ a, z_1 ] = [ b, z_1 ] =1, \ b^{-1}ab = az_1 \ \rangle \]
We form the direct product $ W = A \times B $ and let $ z_2 = x^{p^2} $.
We then let
\[ W_1 = W / \langle \ z_1^{-1}z_2 \ \rangle . \]
$ W_1 $ is thus the central product of $ A  \langle \ z_1^{-1}z_2 \ \rangle / \langle \ z_1^{-1}z_2 \ \rangle $ and $ B \langle \ z_1^{-1}z_2 \ \rangle / \langle \ z_1^{-1}z_2 \ \rangle $, in which the elements $z_1$ and $ z_2$ have been identified with each other. We let $ z = z_1 \langle \ z_1^{-1}z_2 \ \rangle$  ($= z_2 \langle \ z_1^{-1}z_2 \ \rangle $). For notational convenience, we write $ \langle \  z_1^{-1}z_2 \ \rangle = 1 $. Then $W$ is the central product of $A$ and $B$, with $ A \cap B = \langle z \rangle $. In particular we have $ |W| = \frac{|A| |B| }{ | A \cap B | }= \frac{p^3p^3}{p} = p^5 $. We can express $W$ in terms of generators and relations as follows:
\[ W = \langle \ a, \ b, \ x, \ z \mid a^p = b^p = z^p = x^{p^3} = 1, \ [ a, z ] = [ b, z ] = 1, \ b^{-1}ab = az, \ [ a, x ] = [ b, x ] = 1 , \  x^{p^2} = z \ \rangle .\]

By construction $  A \cong C_{p^3} $, and $ B $ is isomorphic to the non-abelian group of order $ p^3$ and exponent $p$. We can confirm that $ \langle \ a, \ x^p \ \rangle \cong C_p \times C_{p^2}  $, and that $ \langle \ ax^p, \ b \ \rangle $ is isomorphic to the non-abelian group of order $p^3$ and exponent $p^2$. Thus, apart from $ C_p \times C_p \times C_p $, all groups of order $p^3$ can be embedded in $W$. We now let
\[ G = W \times C_p .\]
Then, since $ \langle \ a, \ x^{p^2} \ \rangle \cong C_p \times C_p $ is a subgroup of $W$, we see that $G$ is a group of order $p^6$ in which all groups of order $p^3$ can be embedded.

\section{Embeddings of groups of order 15 or less: the soluble case}\label{soluble}

We now turn to the question of determining the  least order of a group, $G$,  in which all groups of order $n$ or less can be embedded for $ n \leq 15$. We first consider the case where $G$ is soluble. If $ n \geq 12 $ then all groups of orders 8, 9 and 12 can be embedded in the Hall $\{ 2, 3 \}$-subgroups of $G$. Lemma \ref{nbound} then shows that the order of a Hall \{2,3\}-subgroup of $G$ is a multiple of $ 2^5.3^3 = 864$. Here we show that the bound given by Lemma \ref{nbound} is not attainable by soluble groups for $ 12 \leq n \leq 15 $. We first derive some properties of groups of order $ 2^5.3^k$. In this section all groups considered will be soluble.

\begin{lemma}\label{2^53^kO3G=1}

Let $G$ be a group of order $ 2^5.3^k $ such that $ O_3(G) =1 $. Then $ k \leq 2 $ and, where $ k = 2 $, the Sylow 2-subgroups of $G$ have exponent at most 4.

\end{lemma}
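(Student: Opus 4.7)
The plan is first to use the structure of the Fitting subgroup in soluble groups to reduce the lemma to an arithmetic question about $3$-subgroups of the automorphism group of a $2$-group. By Burnside's $p^aq^b$ theorem $G$ is soluble (as the section heading in any case stipulates), so $F(G) = O_2(G) \times O_3(G) = O_2(G)$, and $C_G(O_2(G)) \leqslant O_2(G)$ by the standard property of the Fitting subgroup in soluble groups. Writing $P = O_2(G)$, it follows that $G/P$ embeds into $\bmaut(P)$; and since $P$ is a $2$-group, the full $3$-part $3^k$ of $|G|$ injects into $\bmaut(P)$.

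I would then invoke the standard fact that the kernel of the natural restriction $\bmaut(P) \to \bmaut(P/\Phi(P)) \cong GL(d(P),2)$ is a $2$-group (the usual stability group argument applied to the Frattini series of the $2$-group $P$). Consequently $3^k$ divides $|GL(d(P),2)|_3$. A direct computation gives $|GL(n,2)|_3 = 1,\,3,\,3,\,9,\,9$ for $n = 1, 2, 3, 4, 5$ respectively; since $|P| \leq 2^5$ forces $d(P) \leq 5$, we conclude $3^k \leq 9$ and hence $k \leq 2$, giving the first assertion.

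For the second assertion, assume $k = 2$. Then $9 \mid |GL(d(P),2)|_3$ forces $d(P) \geq 4$, so $|\Phi(P)| = |P|/2^{d(P)} \leq 32/16 = 2$. In particular $|P| \geq 16$, and for every $x \in P$ we have $x^2 \in \Phi(P)$ and hence $x^4 = 1$, so $P$ itself has exponent at most $4$. Finally, let $S$ be a Sylow $2$-subgroup of $G$; then $|S| = 32$ and $P \leqslant S$. If $|P| = 32$ then $S = P$ has exponent at most $4$. If $|P| = 16$ then $\Phi(P) = 1$, so $P$ is elementary abelian; since $[S : P] = 2$, every $s \in S$ satisfies $s^2 \in P$, which is an involution or trivial, forcing $s^4 = 1$.

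The main point requiring care is the stability-group bound $|\bmaut(P)|_3 \leq |GL(d(P),2)|_3$ for the $2$-group $P$; once this is in hand, the rest reduces to an elementary Frattini-quotient argument, together with the observation that an index-$2$ extension of an elementary abelian $2$-group has exponent at most $4$.
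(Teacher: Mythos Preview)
Your proof is correct and follows essentially the same route as the paper: reduce to the faithful action of a Sylow $3$-subgroup on $F(G)=O_2(G)$ via $C_G(F(G))\leqslant F(G)$, pass to the Frattini quotient to bound $3^k$ by $|GL(d,2)|_3\leq 9$, and then use $d\geq 4$ when $k=2$ to force $|\Phi(O_2(G))|\leq 2$ and hence exponent at most $4$ in the Sylow $2$-subgroup. The paper's version is terser---it simply asserts the exponent bound once $|F(G)/\Phi(F(G))|\geq 16$---whereas you spell out the two cases $|O_2(G)|=32$ and $|O_2(G)|=16$ explicitly, which is a welcome clarification rather than a different argument.
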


\begin{proof}

Since $ O_3(G) =1 $ we see that $ F(G) $ is a non-trivial 2-group. Letting $P$ be a Sylow 3-subgroup of $G$, we have
\[ C_P( F(G) ) \leqslant C_G( F(G) ) \leqslant F(G). \]
Since $C_P( F(G) )$ is a 3-group we have $ C_P( F(G) ) = 1 $, so $P$ acts faithfully on $F(G)$ and hence on the elementary abelian 2-group $ F(G) / \Phi ( F(G) ) $.

Now $ F(G) / \Phi ( F(G) ) $ has rank at most 5, and since $ 3^2 \top | GL( 5, 2 ) | $, we see that $ |P| \leq 9 $. If $ |P| = 9 $  then $ | F(G) / \Phi ( F(G) ) | \geq 16 $. Since $ F(G) / \Phi ( F(G) ) $ is elementary abelian, the Sylow 2-subgroups of $G$ can have exponent at most 4 in this case. 

\end{proof}

\begin{corollary}\label{2^53^nC8}

Let $G$ be a group of order $ 2^5.3^k $. If $C_8$ can be embedded in $G$ then $ | O_3(G) | \geq 3 ^{k-1} $.

\end{corollary}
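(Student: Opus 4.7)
The plan is to reduce to Lemma~\ref{2^53^kO3G=1} by passing to the quotient $\overline{G} = G/O_3(G)$, which automatically satisfies $O_3(\overline{G}) = 1$.

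First I would write $|O_3(G)| = 3^m$ and set $\ell = k - m$, so that $|\overline{G}| = 2^5 \cdot 3^\ell$, with the goal of showing $\ell \leq 1$. Next I would check that the hypothesis $C_8 \hookrightarrow G$ is transmitted to $\overline{G}$: if $S$ is a Sylow $2$-subgroup of $G$, then since $O_3(G)$ is a $3$-group we have $S \cap O_3(G) = 1$, so the natural map $S \to \overline{G}$ is injective and its image is a Sylow $2$-subgroup of $\overline{G}$. In particular, $\overline{G}$ contains a copy of $C_8$, and every Sylow $2$-subgroup of $\overline{G}$ has exponent at least $8$.

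Then I would apply Lemma~\ref{2^53^kO3G=1} to $\overline{G}$. That lemma forces $\ell \leq 2$, and in the borderline case $\ell = 2$ the Sylow $2$-subgroups of $\overline{G}$ must have exponent at most $4$. The latter contradicts the presence of an element of order $8$ just established, so $\ell = 2$ is ruled out. Therefore $\ell \leq 1$, i.e.\ $m \geq k-1$, giving $|O_3(G)| \geq 3^{k-1}$, as claimed.

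I do not anticipate a real obstacle here; the essential content of the proof is the observation that Sylow $2$-subgroups are preserved under quotienting by a normal $3$-subgroup, together with the standard fact that $O_3(G/O_3(G)) = 1$. Both are routine, and once one has them the result is an immediate corollary of Lemma~\ref{2^53^kO3G=1}.
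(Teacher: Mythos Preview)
Your proposal is correct and follows essentially the same route as the paper: pass to $G/O_3(G)$, note that $O_3$ of the quotient is trivial and that the Sylow $2$-subgroups (hence their exponent) are unchanged, then invoke Lemma~\ref{2^53^kO3G=1} to rule out $\ell=2$ and conclude $\ell\le 1$. The paper's version is just a slightly terser rendering of the same argument.
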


\begin{proof}

Let  $ W = O_3(G) $. Then $ O_3( G/W ) = 1_{G/W} $. Since $C_8$ can be embedded in $G$ we see that the exponent of the Sylow 2-subgroups of $G$, and hence of those of $G/W$, is at least 8. By Lemma \ref{2^53^kO3G=1} we see that $ |G/W| $ is at most $ 2^5.3 $, so it follows that $ | W | = | O_3(G) | \geq 3^{k-1} $.

\end{proof}

Our main result in this section is Lemma \ref{2^53A4}, which deals with embeddings in groups of order 96.

\begin{lemma}\label{2^53A4}

Let $G$ be a group of order  96 (= $2^5.3$) in which $A_4$ can be embedded. Then not all groups of order 8 can be embedded in $G$.

\end{lemma}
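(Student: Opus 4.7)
My plan is to argue by contradiction, assuming $A_4 \leq G$ and that all five groups of order $8$ embed in $G$. A Sylow $2$-subgroup $S$ of $G$ then has order $32$ and contains every group of order $8$, so by the theorem of Section \ref{order8} either $S \cong C_2 \times H_1$ or $S \cong H_2$. I would first establish that $|\bmaut(S)|$ is coprime to $3$ in both cases: any order-$3$ automorphism $\phi$ must act trivially on $\Phi(S) = \langle y^2 \rangle$ (as $|\bmaut(C_4)|=2$), and then the constraint $\phi(y)^2 = y^2$ together with $\phi^3 = \mathrm{id}$ forces $\phi(y) = y$ (since the only other candidate $\phi(y) = y^5$ yields $\phi^3(y) = y^5 \neq y$); the analogous constraints on the conjugation actions of $x_1$ and $x_2$ on $y$ then force $\phi$ to fix $x_1$ and $x_2$ as well. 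Consequently, if $S \unlhd G$ then $G \cong S \times C_3$, which cannot contain $A_4$ because any Klein four-subgroup of $S$ would be centralised by the $C_3$-factor. Hence $S$ is not normal in $G$; the number of Sylow $2$-subgroups is $3$, and the kernel $K := O_2(G) = \bigcap_{g \in G} S^g$ of the $G$-action on these Sylow subgroups gives $G/K \cong S_3$ and $|K| = 16$.

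Since $G$ is soluble (by Burnside's $p^aq^b$ theorem), and since $|O_3(G)| = 3$ would give a subgroup $A_4 \cdot O_3(G)$ of order $36 \nmid 96$, we have $O_3(G) = 1$, so $F(G) = O_2(G) = K$. Fitting's theorem then gives $C_G(K) = Z(K)$, whence $G/Z(K) \hookrightarrow \bmaut(K)$; since $|Z(K)|$ is a power of $2$, the index $|G/Z(K)|$ is always divisible by $3$, and therefore $3 \mid |\bmaut(K)|$. Of the $14$ groups of order $16$, only $C_4 \times C_4$, $C_4 \times C_2 \times C_2$, $C_2^4$, $C_2 \times Q_2$ and the Pauli group $C_4 \circ D_4$ have automorphism group of order divisible by $3$. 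Lemma \ref{Habex4} rules out the three abelian candidates (neither $C_2 \times H_1$ nor $H_2$ contains an abelian subgroup of order $16$ and exponent at most $4$), and a centraliser computation in $S$ --- the critical point being that $C_{H_2}(Q_2) \cong C_4$ whereas $C_{C_2 \times H_1}(Q_2) \cong V_4$ --- shows that $C_2 \times Q_2$ embeds only in $C_2 \times H_1$ and the Pauli group only in $H_2$, so the isomorphism type of $K$ is forced in each case.

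Finally, I would exploit the fact that in both $C_2 \times Q_2$ and $C_4 \circ D_4$ the derived subgroup $[K,K]$ has order $2$, so its unique non-identity element $z$ is characteristic in $K$. A direct check shows that every Klein four-subgroup of $K$ contains $z$: in $C_2 \times Q_2$ because $K$ has exactly three involutions, all lying in $Z(K)$; in the Pauli group because its three Klein four-subgroups are precisely those obtained by adjoining $z$ to one of the central pairs $\{\pm X\}, \{\pm Y\}, \{\pm Z\}$. On the other hand $V := A_4 \cap K$ is a Klein four-group (it is normal in $A_4$, not trivial since otherwise $A_4 \hookrightarrow G/K \cong S_3$, and not all of $A_4$ since $K$ is a $2$-group), and an element $t \in A_4$ of order $3$ cyclically permutes the three involutions of $V$. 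But since $K \unlhd G$, $t$ acts on $K$ as an automorphism and must fix $z \in V$, contradicting the cyclic $3$-action on $V$. The most delicate step in the plan is the identification of $K$ up to isomorphism; once the two centraliser computations are in hand, everything else reduces to standard manipulations.
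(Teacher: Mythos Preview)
Your approach is correct and genuinely different from the paper's. The paper never invokes Theorem~1 (the classification of the two order-$32$ groups containing all groups of order $8$); instead it works directly with $O^{3'}(G)$, sets $W = O_2(O^{3'}(G))$, and through a case analysis on $|W/\Phi(W)|$ and $|\Phi(W)|$ eventually produces an abelian subgroup of order $16$ and exponent at most $4$, whereupon Lemma~\ref{Habex4} finishes every branch. Your argument instead pins down the Sylow $2$-subgroup $S$ via Theorem~1, forces $K = O_2(G)$ to have order $16$ and to admit an automorphism of order $3$, and then exploits the fact that in both surviving candidates $C_2 \times Q_2$ and $C_4 \circ D_4$ every Klein four-subgroup contains the unique non-identity element of $K'$; this yields a crisp contradiction with the transitive $A_4$-action on the involutions of $V = A_4 \cap K$. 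The paper's route is more self-contained (it does not need Theorem~1 or automorphism data for the fourteen groups of order $16$), while yours is more structural and gives a cleaner endgame once the bookkeeping is done. Note also that your step~7 (the centraliser computations forcing a single isomorphism type for $K$ depending on $S$) is correct but unnecessary: the contradiction in your final step works uniformly for both candidates, so you only need $K \in \{C_2 \times Q_2,\, C_4 \circ D_4\}$.

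One small imprecision worth fixing: in arguing $3 \nmid |\bmaut(S)|$ you write ``the only other candidate $\phi(y) = y^5$''. In fact there are four elements of $S$ whose square is $y^2$ (in $C_2 \times H_1$ they are $y,\, y^5,\, x_1 y,\, x_1 y^5$; in $H_2$ they are $y,\, y^5,\, x_1 y^3,\, x_1 y^7$). The conclusion $\phi(y) = y$ still holds --- for each of the four one checks directly that $\phi^2(y) = y$, so an order-$3$ automorphism must fix $y$ --- but the stated reason is incomplete. A tidier route: in each of $C_2 \times H_1$ and $H_2$ there is a \emph{unique} maximal subgroup isomorphic to $C_2 \times C_8$ (respectively $M_{16}$), which is therefore characteristic; since its automorphism group has $2$-power order, any order-$3$ automorphism of $S$ fixes it pointwise, giving $\phi(y) = y$ and $\phi(x_1) = x_1$ immediately.
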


\begin{proof}

We let $ U \leqslant G$ be such that $ U \cong A_4 $. We suppose first that $ U = O^{3^{'}} (G) $. Then $ U \unlhd G $ and, letting $ C = C_G(U) $, we see that $ C \unlhd G $ and $ C \cap U = 1$. Since $ \bmaut(A_4) \cong S_4 $ and $ |G| = 2^5.3 $ we further see that $ |C| = 4 $ or $ |C| = 8 $.

If $ |C| = 4 $ then $ U^{'} \times C $ is an abelian subgroup of $G$ of order 16 and exponent at most 16, so we may apply Lemma \ref{Habex4} to see that not all groups of order 8 can be embedded in $G$. If $ |C| = 8 $ then $C$ has an abelian subgroup or order 4, $C_1$ say. Then $ U^{'} \times C_1 $ is an abelian subgroup of order 16 and exponent at most 4. We again apply Lemma \ref{Habex4} to see that not all groups of order 8 can be embedded in $G$. Thus we may assume that $U$ is a proper subgroup of $ O^{3^{'}} ( G ) $.

We let $ W = O_2 ( O^{3^{'}}( G ) ) $ and let $ \langle y \rangle $ be a Sylow 3-subgroup of $U$. We see that $ O^{3^{'}} ( G ) = W \langle y \rangle $ with $ [ \langle y \rangle, W ] = W $. By Maschke's Theorem, we have either $ W / \Phi ( W ) \cong C_2 \times C_2 $ or $ W / \Phi (W) \cong C_2 \times C_2 \times C_2 \times C_2 $. In the latter case the Sylow 2-subgroups of $G$ can have exponent at most 4, so $C_8$ cannot be embedded in $G$. Hence we may further assume that $ W / \Phi (W) \cong C_2 \times C_2 $.

By the minimal normality of $ U ^{'} $ in $U$ we see that if $ U^{'} \not\leqslant \Phi (W) $ then $ U^{'} \cap \Phi (W) = 1 $. Then by comparison of orders $ W / \Phi (W) = U^{'} \Phi (W) / \Phi (W) $, so $ W = U^{'} \Phi(W) = U^{'} $. But then $ U = \langle y \rangle U^{'} = \langle y \rangle W = O^{3^{'}} (G) $ and a contradiction ensues. Therefore $ U^{'} \leqslant \Phi (W) $ so, in particular, $ | \Phi (W) | \geq 4 $ and $ |W| \geq 16 $.

If $ | \Phi (W) | = 8 $ then, by comparison of orders, $W$ is a Sylow 2-subgroup of $G$. In addition since $ C_2 \times C_2 \cong U^{'} \leqslant \Phi (W) $ we see that either $ \Phi ( \Phi (W) ) = 1 $ or $ \Phi ( \Phi (W) ) \cong C_2 $. If $ \Phi ( \Phi (W) ) = 1 $ then $ \Phi (W) $ is elementary abelian and $W$ has exponent at most 4, so $C_8$ cannot be embedded in $G$. If $ \Phi ( \Phi (W) ) \cong C_2 $ then, by minimal normality,  $ \Phi ( \Phi (W) ) \cap U^{'} = 1 $ so $ \Phi (W) = U^{'} \times \Phi ( \Phi (W) ) \cong C_2 \times C_2 \times C_2 $ and the contradiction $ \Phi ( \Phi (W) ) = 1 $ ensues.

We may thus assume that $ | \Phi (W) | = 4 $, so $ \Phi (W) = U^{'} $ and $ |W| = 16 $. Since $ \bmaut ( U^{'} ) \cong S_3 $ and $ O^{3^{'}} ( G ) $ has no proper $ 3^{'} $-factor groups we see that $ W \leqslant C_G(U^{'} ) $, so $ U^{'} \leqslant Z(W) $. Furthermore, since $ W / U^{'} = W / \Phi (W) $ is abelian, it follows that $W$ is either abelian or else has class 2. Since $ W / \Phi( W) $ has rank 2 we may let $ W = \langle x, y \rangle $ for suitable elements $x$ and $y$. Then $ [ x, y ] \in \Phi (W)  = U^{'} \leqslant Z(W) $, so if $W$ is non-abelian we see that $ W^{'} = \langle [ x, y ] \rangle \cong C_2 $. By the minimal normality of $ U^{'} $ in $U$, we than have the contradiction $ U^{'} = W^{'} \cong C_2 $. It follows that $W$ is abelian. Since $ W / \Phi (W) \cong C_2 \times C_ 2 \cong \Phi (W) = U^{'} $ we see that $W$ has exponent at most 4. Hence $W$ is an abelian subgroup of order 16 and exponent at most 4 in $G$, so we may apply Lemma \ref{Habex4} to see that not all groups of order 8 can be embedded in $G$.

\end{proof}

\begin{corollary}\label{2^53^nA4}

Let $G$ be a group of order $ 2^5.3^k$ in which $A_4$ can be embedded. Then not all groups of order 8 can be embedded in $G$.

\end{corollary}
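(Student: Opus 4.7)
The plan is to reduce the general case to Lemma \ref{2^53A4} by factoring out the normal 3-core. Suppose for contradiction that all groups of order 8, as well as $A_4$, can be embedded in $G$. Since in particular $C_8$ can be embedded in $G$, Corollary \ref{2^53^nC8} immediately gives $|O_3(G)| \geq 3^{k-1}$. Writing $N = O_3(G)$, we therefore have $|G/N| \in \{2^5, 2^5 \cdot 3\} = \{32, 96\}$.

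Next I would show that both $A_4$ and every group of order 8 still embed in the quotient $G/N$. For any 2-subgroup $P \leqslant G$ we have $P \cap N = 1$ by coprimality, so $P$ maps isomorphically onto $PN/N \leqslant G/N$; thus all five groups of order 8 embed in $G/N$. For a subgroup $U \leqslant G$ with $U \cong A_4$, the intersection $U \cap N$ is a normal 3-subgroup of $U$, and since $A_4$ has no nontrivial normal 3-subgroup we get $U \cap N = 1$, so $U$ embeds in $G/N$ as well.

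It remains to dispose of the two possibilities for $|G/N|$. If $|G/N| = 32$ then $G/N$ is a 2-group and cannot contain a copy of $A_4$, a contradiction. If $|G/N| = 96$ then $G/N$ satisfies the hypotheses of Lemma \ref{2^53A4}, and we conclude that not all groups of order 8 can be embedded in $G/N$, again contradicting the previous paragraph. Either way we reach a contradiction, completing the proof.

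There isn't really a hard step here: the work was done in Corollary \ref{2^53^nC8} and Lemma \ref{2^53A4}, and the corollary is a clean reduction. The only point requiring any care is verifying that the two relevant classes of subgroups survive the quotient by $N$, which is straightforward from the coprimality of orders and the normal-subgroup structure of $A_4$.
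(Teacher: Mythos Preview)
Your proof is correct and follows essentially the same route as the paper: both use Corollary \ref{2^53^nC8} to bound $|O_3(G)|$, observe that $A_4$ and the groups of order 8 survive passage to $G/O_3(G)$ (by coprimality and $O_3(A_4)=1$), and then invoke Lemma \ref{2^53A4}. The only cosmetic difference is that the paper uses the embedding of $A_4$ in $G/O_3(G)$ to conclude directly that $|G/O_3(G)|=96$, whereas you treat the case $|G/O_3(G)|=32$ separately before discarding it.
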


\begin{proof}

Let $U \leqslant G $ be such that $ U \cong A_4 $. By Corollary \ref{2^53^nC8} if $C_8$ can be embedded in $G$ then $ | O_3(G) | \geq 3^{k-1} $, so $ | G / O_3 (G) | $ is a divisor of $ 2^5.3 = 96 $. But $ U \cap O_3 (G) \leqslant O_3(U) =1 $, so $ 3 \mid | G / O_3 (G) | $. Hence $ | G / O_3 (G) | =  2 ^5.3 $ and $ A_4 \cong UO_3(G) / O_3(G) $. Since the Sylow 2-subgroups of $ G / O_3 (G) $ are isomorphic to those of $G$ we may now apply Lemma \ref{2^53A4} to see that not all groups of order 8 can be embedded in $G$.

\end{proof}

As a consequence of Lemma \ref{nbound} and Corollary \ref{2^53^nA4} we have the following result that establishes a lower bound for the order of a soluble group in which all  groups order $n$ or less can be embedded for $ 12 \leq n \leq 15 $. It will be shown in Section \ref{leqnleq15} that the bound given below can be attained in these cases.

\begin{corollary}\label{boundnleq15}

\begin{enumerate}[(a)]

\item 
Let $G$ be a finite soluble group in which all groups of order 12 or less can be embedded. Then $|G|$ is a multiple of $ 2^6.3^3.5.7.11 = 665  280 $.
\item
Let $G$ be a finite soluble group in which all groups of order $n$ or less can be embedded for $ n =$ 13, 14 or 15. Then $|G|$ is a multiple of $ 2^6.3^3.5.7.11.13 = 8648640 $.
\end{enumerate}

\end{corollary}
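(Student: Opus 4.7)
The plan is to upgrade the elementary Lemma \ref{nbound} bound by a factor of $2$ in the Sylow-$2$ part, using Corollary \ref{2^53^nA4} applied to a Hall $\{2,3\}$-subgroup. Nothing else in the prime factorization needs changing.

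For part (a), first invoke Lemma \ref{nbound} with $n=12$: the primes at most $12$ are $2,3,5,7,11$, with maximal prime powers $2^3$, $3^2$, $5$, $7$, $11$ respectively, yielding the divisibility $2^5\cdot 3^3\cdot 5\cdot 7\cdot 11 \mid |G|$. To sharpen the $2$-part from $2^5$ to $2^6$, I would argue by contradiction: assume the Sylow $2$-subgroup of $G$ has order exactly $2^5$. Since $G$ is soluble, Hall's theorem provides a Hall $\{2,3\}$-subgroup $H$, of order $2^5\cdot 3^k$ for some $k\geq 3$. Both $A_4$ and every group of order $8$ are $\{2,3\}$-groups, and since all $\{2,3\}$-subgroups of $G$ lie in some conjugate of $H$ (again by Hall's theorem), replacing them by appropriate conjugates we may take them all to sit inside the same $H$. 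But then $A_4\hookrightarrow H$ with $|H|=2^5\cdot 3^k$, so Corollary \ref{2^53^nA4} says not all groups of order $8$ embed in $H$, contradicting the fact that each of $C_8$, $C_4\times C_2$, $(C_2)^3$, $D_4$, $Q_2$ must embed into some Hall $\{2,3\}$-subgroup, and hence into $H$.

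Consequently the $2$-part of $|G|$ is at least $2^6$. Combined with the Lemma \ref{nbound} bound on the other prime powers, which do not involve $2$, this gives $2^6\cdot 3^3\cdot 5\cdot 7\cdot 11 \mid |G|$, proving (a).

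For part (b), the hypothesis includes all groups of order at most $12$, so part (a) already yields $2^6\cdot 3^3\cdot 5\cdot 7\cdot 11 \mid |G|$. For $n\in\{13,14,15\}$ the only additional prime that appears among $\{p\leq n\}$ is $13$ (since $14=2\cdot 7$ and $15=3\cdot 5$ introduce no new primes), and the maximal prime powers $\leq n$ for $2,3,5,7,11$ are unchanged. Since $C_{13}$ must embed in $G$, Lagrange's theorem forces $13\mid |G|$, and coprimality to $2^6\cdot 3^3\cdot 5\cdot 7\cdot 11$ gives $2^6\cdot 3^3\cdot 5\cdot 7\cdot 11\cdot 13 \mid |G|$.

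The only delicate point is the application of Hall's theorem to pass from embeddings in $G$ to embeddings in a single Hall $\{2,3\}$-subgroup $H$, but this is standard: each embedded $\{2,3\}$-subgroup of $G$ is contained in some Hall $\{2,3\}$-subgroup, and all such are conjugate, so conjugating the embedding places its image in $H$. Everything else is a direct appeal to the lemmas already proved.
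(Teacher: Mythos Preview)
Your proof is correct and follows essentially the same approach the paper intends: the paper states the corollary ``as a consequence of Lemma \ref{nbound} and Corollary \ref{2^53^nA4}'' and, in the preamble to Section \ref{soluble}, already observes that in the soluble case all groups of orders 8, 9 and 12 embed in a Hall $\{2,3\}$-subgroup. Your argument simply makes this explicit---passing to a Hall $\{2,3\}$-subgroup of order $2^5\cdot 3^k$ under the assumption $2^5\top |G|$ and invoking Corollary \ref{2^53^nA4} to reach a contradiction---which is exactly the intended deduction.
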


\section{Embeddings of groups of order 15 or less: the non-soluble case}\label{non-soluble}

Turning to embeddings in non-soluble groups, we prove a result analogous to Corollary \ref{boundnleq15} for the case where $G$ is non-soluble.  Due to the presence of non-abelian chief factors, the non-soluble case turns out to be considerably more complicated than the soluble case. We begin with some elementary results that show how normal quasisimple subgroups and central products may arise under certain circumstances.

\begin{lemma}\label{B'CA}

Let  $A$  and  $B$   be subgroups of the group $G$ such that
\begin{enumerate}[(i)]
\item
$ B  \leqslant  C_{G}( [ A, B] ) ;$
\item
$ [A,B]$  is abelian.
\end{enumerate}
Then $ B^{'}  \leqslant  C_{G}(A)$.
  
\end{lemma}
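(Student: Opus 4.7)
The plan is, for each fixed $ a \in A $, to show that the map $ \phi_a : B \to [A,B] $ defined by $ \phi_a(b) = [b, a] $ is a group homomorphism. Once this is established, the fact that its codomain is forced to be abelian by hypothesis (ii) will immediately give $ B' \leq \ker \phi_a $, i.e.\ $ [b', a] = 1 $ for every $ b' \in B' $. Letting $ a $ range over all of $ A $ then yields $ B' \leq C_G(A) $, which is the conclusion.

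To verify that $ \phi_a $ is a homomorphism I would invoke the standard commutator identity $ [b_1 b_2, a] = [b_1, a]^{b_2} [b_2, a] $, valid for any $ b_1, b_2, a $ in a group. Now $ [b_1, a] \in [A,B] $, and hypothesis (i) tells us that $ b_2 \in B \leq C_G([A,B]) $, so the conjugation by $ b_2 $ acts trivially: $ [b_1, a]^{b_2} = [b_1, a] $. The identity therefore collapses to $ \phi_a(b_1 b_2) = [b_1, a][b_2, a] = \phi_a(b_1) \phi_a(b_2) $, as required.

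Hypothesis (ii) now supplies the final step. Since $ [A,B] $ is abelian, the subgroup $ \phi_a(B) $ is an abelian homomorphic image of $ B $, so $ \phi_a $ factors through $ B / B' $. In particular $ \phi_a(B') = \{1\} $, i.e.\ $ [b', a] = 1 $ for all $ b' \in B' $, giving $ B' \leq C_G(a) $. Since this holds for every $ a \in A $, we conclude $ B' \leq C_G(A) $.

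There is no real obstacle: the entire argument rests on the single observation that hypothesis (i) linearises the commutator identity $ [b_1 b_2, a] = [b_1, a]^{b_2} [b_2, a] $ in the first slot, after which hypothesis (ii) immediately forces the derived subgroup into the kernel. One could route the same argument through the three subgroups lemma applied with $ H = L = B $ and $ K = A $, since (i) delivers both $ [A,B,B] = 1 $ and (using $[A,B] = [B,A]$) $ [B,A,B] = 1 $, whence $ [B,B,A] = [B', A] = 1 $; but the homomorphism approach above seems the most direct.
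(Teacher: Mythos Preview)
Your proof is correct and is essentially the same argument as the paper's, just packaged more conceptually: the paper writes out the chain $a^{b_1b_2} = (a[a,b_1])^{b_2} = a[a,b_2][a,b_1] = a[a,b_1][a,b_2] = a^{b_2b_1}$, which is precisely the unwound version of your observation that $\phi_a$ is a homomorphism (using (i) to drop the conjugation) into an abelian group (using (ii) to swap the factors). Your closing remark about the three subgroups lemma is a genuinely alternative route not taken in the paper, and is arguably the slickest of the three.
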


\begin{proof}

Let  $a$  be an element of $A$  and let $b_{1}$  and  $b_{2}$ be elements of $B$. Then
\begin{eqnarray*}
a^{b_{1}b_{2}} & = & (a^{b_{1}})^{b_{2}} \\
& = & (a[a, b_{1}] )^{b_{2}} \\
& = & a^{b_{2}}[a, b_{1}]\\
& = & a[a,b_{2}] [a, b_{1}] \\
& = & a[a, b_{1}] [a,b_{2}] \\
& = & a^{b_{1}}[a, b_{2}] \\
& = & (a [a,b_{2}])^{b_{1}} \\
& = &( a^{b_{2}})^{b_{1}} \\
& = & a^{b_{2}b_{1}}.
\end{eqnarray*}
Thus $ a^{b_{1}b_{2}} = a^{b_{2}b_{1}}$, so $ a^{b_{1}b_{2}b_{2}^{-1}b_{1}^{-1}} = a^{b_{2}b_{1}b_{2}^{-1}b_{1}^{-1}} $ and we have 
$ a  =  a ^{ [b_{2}^{-1}, b_{1}^{-1}]} $.
Since this holds for all $ b_{1}, b_{2}  \in B$, we conclude that $ B^{'}  \leqslant C_{G}(A) $.

\end{proof}

\begin{corollary}\label{CGN}

Let  $N  \unlhd G$  be such that $N  =  N^{'}$. Then  $ C_{G}(N/Z(N))  =  C_{G}(N) $.

\end{corollary}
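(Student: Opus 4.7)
The plan is to prove the two inclusions $C_G(N) \leqslant C_G(N/Z(N))$ and $C_G(N/Z(N)) \leqslant C_G(N)$ separately, with the second (non-trivial) inclusion obtained as a direct application of Lemma \ref{B'CA}. The hypothesis $N = N'$ will be used only at the very last step.

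The inclusion $C_G(N) \leqslant C_G(N/Z(N))$ is immediate: if $g \in G$ centralises every element of $N$, then for each $n \in N$ we have $(nZ(N))^g = n^g Z(N) = nZ(N)$, so $g$ centralises $N/Z(N)$. Next I would take an arbitrary element $g \in C_G(N/Z(N))$ and translate the condition into commutator form. Since $g$ centralises every coset $nZ(N)$ we get $[g,n] \in Z(N)$ for every $n \in N$, and so, setting $A = \langle g \rangle$ and $B = N$, we have
\[
[A,B] \leqslant Z(N).
\]

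Now I would verify the two hypotheses of Lemma \ref{B'CA} for this choice of $A$ and $B$. Hypothesis (ii) is clear, since $Z(N)$ is abelian and hence so is any subgroup of it. For hypothesis (i), note that $[A,B] \leqslant Z(N) \leqslant Z(B)$, so trivially $B \leqslant C_G([A,B])$. Lemma \ref{B'CA} then gives $B' \leqslant C_G(A)$, that is, $N' \leqslant C_G(g)$.

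Finally I would invoke the hypothesis $N = N'$ to conclude $N \leqslant C_G(g)$, which is equivalent to $g \in C_G(N)$. Since $g$ was an arbitrary element of $C_G(N/Z(N))$, this yields $C_G(N/Z(N)) \leqslant C_G(N)$ and completes the argument. There is no real obstacle here; the only point worth checking carefully is that all the required containments of commutator subgroups in $Z(N)$ really do allow us to apply Lemma \ref{B'CA} directly, which they do.
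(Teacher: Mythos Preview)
Your proof is correct and follows essentially the same route as the paper's. The only cosmetic difference is that the paper applies Lemma \ref{B'CA} once with $A = C_G(N/Z(N))$ and $B = N$, whereas you apply it elementwise with $A = \langle g \rangle$ for arbitrary $g \in C_G(N/Z(N))$; the verification of the hypotheses and the use of $N = N'$ at the end are identical.
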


\begin{proof}

Let $ C/Z(N) = C_{G/Z(N)}(N/Z(N))$. Then $ C = C_{G}(N/Z(N))$ and $ [ C, N] \leqslant Z(N) $. Hence, in particular, $ N \leqslant C_{G}( [ C, N ]) $. By Lemma \ref{B'CA} we have $  N^{'}  = N \leqslant C_{G}(C) $. Thus $ C \leqslant C_{G}(N) $. Since the reverse inclusion is evident we conclude that
$ C_{G}(N/Z(N))  =  C_{G}(N). $

\end{proof}

\begin{lemma}\label{NIW}

Let $G$ be a finite group and let  $ 1 = G_{0} \unlhd G_{1} \unlhd \dots \unlhd G_{n} = G $ be a chief series of $G$. Suppose that $ G_{k}/G_{k-1} $ is a chief factor of $G$ such that
\begin{enumerate}[(i)]
\item
$G_{k}/G_{k-1 } \cong W$, where $W$ is a non-abelian finite simple group;
\item
for $ i = 1, \dots , k-1 $ there do not exist subgroups $K$ and $H$ in $ \bmaut(G_{i}/G_{i-1})$ with $ K \unlhd H $ and $ H/K \cong W$.
\end{enumerate}
Then there exists a normal subgroup $ N \unlhd G $ such that $ N^{'} = N $ and $ N/Z(N) \cong W $.

\end{lemma}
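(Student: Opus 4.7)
The plan is to induct on $k$, the position of the distinguished chief factor. For $k = 1$ one simply takes $N := G_1 \cong W$, which is non-abelian simple and hence perfect with trivial centre. For $k \geq 2$ I would pass to $\tilde{G} := G/G_1$: the induced chief series has length $n-1$ with the distinguished factor at position $k-1$, and hypothesis (ii) transfers directly since the lower chief factors $G_i/G_{i-1}$ for $2 \leq i \leq k-1$ are unchanged. The inductive hypothesis then produces $\tilde{N} \unlhd \tilde{G}$ perfect with $\tilde{N}/Z(\tilde{N}) \cong W$, in particular quasisimple. Lifting $\tilde{N}$ to its preimage $N^* \unlhd G$ and setting $N_0$ to be the terminal member of the derived series of $N^*$, one obtains $N_0$ perfect, characteristic in $N^*$, and hence normal in $G$; since $N^*/N_0$ is solvable while $N^*/G_1 \cong \tilde{N}$ is perfect, $N^*/N_0 G_1$ is both, hence trivial, giving $N_0 G_1 = N^*$ and $N_0/(N_0 \cap G_1) \cong \tilde{N}$. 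The intersection $T := N_0 \cap G_1$ is normal in $G$, so by minimality of $G_1$ either $T = 1$ (in which case $N_0 \cong \tilde{N}$ already works) or $T = G_1$.

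The remaining task is to treat $G_1 \leqslant N_0$. Let $Z^*$ be the preimage in $N_0$ of $Z(\tilde{N})$, so $N_0/Z^* \cong W$, and let $C := C_{N_0}(G_1) \unlhd G$, so that $N_0/C$ embeds in $\bmaut(G_1)$. Since $CZ^*/Z^*$ is normal in $N_0/Z^* \cong W$ and $W$ is simple, either $C \leqslant Z^*$ --- in which case $W$ is a quotient of $N_0/C \leqslant \bmaut(G_1)$, contradicting (ii) at $i = 1$ --- or $CZ^* = N_0$. When $G_1$ is elementary abelian, $Z^*$ is solvable (an abelian extension of $Z(\tilde{N})$ by $G_1$), so the perfect quotient $N_0/C$ of $Z^*$ is trivial; this gives $G_1 \leqslant Z(N_0)$, and since $[N_0, Z^*] \leqslant G_1 \leqslant Z(N_0)$ the commutator map $N_0 \times Z^* \to Z(N_0)$ is a bihomomorphism factoring through $(N_0)^{\mathrm{ab}} = 1$, forcing $Z^* \leqslant Z(N_0)$ and $N_0/Z(N_0) \cong W$. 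When instead $G_1 \cong S^m$ with $S$ non-abelian simple, first note $S \not\cong W$ (else $W$ would embed in $\bminn(G_1) \leqslant \bmaut(G_1)$, violating (ii)); since $\tilde{N}$ is quasisimple, the normal subgroup $CG_1/G_1 \unlhd \tilde{N}$ either lies in $Z(\tilde{N})$ (reducing to the earlier contradiction) or equals $\tilde{N}$. The latter yields $CG_1 = N_0$, and combined with $C \cap G_1 \leqslant Z(G_1) = 1$ gives the direct decomposition $N_0 = C \times G_1$; then $C \cong \tilde{N}$ is perfect with $C/Z(C) \cong W$, and $N := C$ is the required subgroup.

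The hard part is this final subcase: when $G_1 \leqslant N_0$ and $G_1$ is a power of a non-abelian simple group, $N_0$ itself is no longer quasisimple (it carries $G_1$ as an extraneous direct factor), and the correct $N$ must be chosen as the centralizer $C_{N_0}(G_1)$ rather than $N_0$. The quasisimplicity of $\tilde{N}$ forces the normal subgroup $CG_1/G_1$ of $\tilde{N}$ to be either central or all of $\tilde{N}$, and hypothesis (ii) excludes the central alternative, producing the direct splitting on which the construction hinges.
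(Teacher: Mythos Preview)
Your argument is correct and follows essentially the same route as the paper: induct by passing to $G/G_1$, lift the quasisimple subgroup, and analyse the centraliser $C$ of $G_1$ inside the lift, using the simplicity of $W$ to force either $C\leqslant Z^*$ (contradicting (ii)) or $C$ large enough to split off the desired $N$. The only cosmetic differences are that you pass immediately to the perfect core $N_0=(N^*)^{(\infty)}$ rather than working with the full preimage and then taking derived subgroups as needed, and that your bihomomorphism argument in the abelian case is exactly the content of the paper's Lemma~\ref{B'CA} in slightly different packaging. (Your observation that $S\not\cong W$ in the non-abelian case is true but not actually used in your argument; the dichotomy for $CG_1/G_1$ in the quasisimple $\tilde N$ already does the work.)
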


\begin{proof}

We use induction on $ | G | $. If $ G_{1} = G_{k}$ then the result is trivial. If $ G_{1} \not= G_{k} $ then
\[ 1_{G/G_{1}} = G_{1}/G_{1} \unlhd G_{2}/G_{1} \unlhd \dots \unlhd G_{n}/G_{1} = G/G_{1}  \]
is a chief series of $ G/G_{1} $ that satisfies the above hypotheses. By induction there exists a subgroup $ N_{1} \unlhd G $ with $ G_{1} \leqslant N_{1} $ and $ (N_{1}/G_{1})^{'} = N_{1}/G_{1} $, and, letting $ Z_{1}/G_{1} = Z(N_{1}/G_{1}) $,  such that $ N_{1}/Z_{1} \cong (N_{1}/G_{1})/(Z_{1}/G_{1}) \cong W $.

We let $ C_{1} = C_{N_1}(G_{1})  \unlhd G $. If $ C_{1} \leqslant Z_{1}$ then, letting $ H = N_{1}/C_{1} $ and $ K = Z_{1}/C_{1}  $, we see that $H$ and $K$ can be identified with subgroups of $ \bmaut(G_{1}) $ such that $ H/K \cong W $.  This is excluded by hypothesis, so  $ C_{1} \not\leqslant Z_{1} $. Then $ 1 \not= C_{1}Z_{1}/Z_{1} \unlhd N_{1}/Z_{1} $ so, by the simplicity of $ N_{1}/Z_{1} $, we have $ C_{1}Z_{1}/Z_{1} = N_{1}/Z_{1} $ and, in particular, $ N_{1} = C_{1}Z_{1} $. It follows that
\begin{eqnarray*}
N_{1}/G_{1} & = & (N_{1}/G_{1})' \\
& = & (C_{1}Z_{1}/G_{1})^{'} \\
& = & (C_{1}^{'}G_{1}/G_{1})(Z_{1}^{'}G_{1}/G_{1})[ C_{1}, Z_{1} ]G_{1}/G_{1} \\
& = & (C_{1}^{'}G_{1}/G_{1})(1_{G/G_{1}})[ C_{1}, Z_{1} ]G_{1}/G_{1} \\
& \leqslant &  C_{1}G_{1}/G_{1}.
\end{eqnarray*}
Hence $ N_{1} = C_{1}G_{1} $.

Since $ G_{1 }$ is a chief factor of $G$ we see that either $ G_{1} =  U_{1} \times \dots \times U_{s}$, with $ U_{j} \cong U $, $ j= 1, \dots , s $, where $U$ is a non-abelian finite simple group, or $G_{1}$ is an elementary abelian p-group for some prime $p$. In the first case we have  $ Z(G_{1}) = 1 $. Thus $ N_{1} = G_{1} \times C_{1} $, whence $ C_{1} \cong C_{1}G_{1}/G_{1} \cong N_{1}/G_{1} $. We then see that  $ N = C_{1} $ is a normal subgroup of $G$ with the required properties.

In the second case $G_{1}$ is abelian, so $ N_{1} = C_{1}G_{1} = C_{1} $ and $ G_{1} \leqslant Z(N_{1}) $. Since $G_{1}$ is minimal normal in $G$ we have either $ G_{1} \cap N_{1}^{'} = 1 $ or $ G_{1} \leqslant N_{1}^{'}$. If $ G_{1} \cap N_{1}^{'} = 1 $ then $ N_{1}/G_{1} = ( N_{1}/G_{1})^{'} =  N_{1}^{'}G_{1}/G_{1} \cong N_{1}^{'}/(N_{1}^{'} \cap G_{1}) \cong N_{1}^{'} $. Hence we see that $ N = N_{1}^{'} $ is a normal subgroup with the required properties. If $ G_{1} \leqslant N_{1}^{'} $ then $ N_{1} = N_{1}^{'}G_{1} = N_{1}^{'} $. Now $ Z_{1}/G_{1} = Z(N_{1}/G_{1}) $ so $ [ N_{1}, Z_1 ] \leqslant G_{1} \leqslant Z(N_{1}) $. Thus $N_{1}$ centralises $ [ N_{1}, Z_{1} ] $ and $ [ N_{1}, Z_{1} ] $ is abelian. It follows from Lemma \ref{B'CA} that $ N_{1}^{'} = N_{1} $ centralises $Z_{1}$, so $ Z_{1} = Z(N_{1}) $. Taking $ N =N_{1} $, we then have $ N = N^{'} $ and $ N/Z(N) \cong W $, as desired.

\end{proof}

\begin{corollary}\label{N1N2}

Let  $G$ be a finite group and let $ 1 = G_{0} \unlhd G_{1} \unlhd \dots \unlhd G_{n} = G $ be a chief series of $G$. Suppose that $ G_{k_{1}}/G_{k_{1}-1} $ and $ G_{k_{2}}/G_{k_{2}-1} $ are chief factors of $G$ with $ k_{1} \not= k_{2} $ for which the following is satisfied for $ i = 1,2 $: 
\begin{enumerate}[(i)]

\item
$G_{k_{i}}/G_{k_{i}-1 } \cong W_{i} $, where $W_{i}$ is a non-abelian (finite) simple group;
\item
for $ j = 1, \dots k_{i}-1 $ there do not exist subgroups $K$ and $H$ in $ \bmaut(G_{j}/G_{j-1})$ with $ K \unlhd H $ and $ H/K \cong W_{i}$.
\end{enumerate}
Then there exist  normal subgroups $ N_{1}$ and $N_{2}$ in $G$ such that $ [ N_{1}, N_{2}] = 1 $  and such that $ N_{i}^{'} = N_{i} $ and $ N_{i}/Z(N_{i}) \cong W_{i} $ ($ i = 1,2 $).

\end{corollary}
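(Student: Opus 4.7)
The plan is to apply Lemma~\ref{NIW} separately to each of the two chief factors, producing normal subgroups $N_1, N_2 \unlhd G$ with $N_i' = N_i$ and $N_i/Z(N_i) \cong W_i$. The remaining task is to show $[N_1, N_2] = 1$. Without loss of generality, assume $k_1 < k_2$.

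The first key step, and the main obstacle, is to establish that $W_1 \not\cong W_2$ (so in particular $N_1 \neq N_2$); without this, the final commutator argument collapses, since $[N_1, N_1] = N_1' = N_1 \neq 1$. Since $W_1$ is a non-abelian simple group, $Z(W_1) = 1$, and so $W_1 \cong \bminn(W_1) \leqslant \bmaut(W_1) = \bmaut(G_{k_1}/G_{k_1 - 1})$. If $W_2 \cong W_1$, one could take $H = \bminn(W_1)$ and $K = 1$ to exhibit subgroups $K \unlhd H \leqslant \bmaut(G_{k_1}/G_{k_1 - 1})$ with $H/K \cong W_2$, contradicting hypothesis~(ii) for $i = 2$ at the index $j = k_1 \in \{1, \dots, k_2 - 1\}$.

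With $N_1 \neq N_2$ in hand, set $D = N_1 \cap N_2 \unlhd G$. Because a quasisimple group has no normal subgroups other than those contained in its centre and the whole group, the inclusion $D \unlhd N_i$ forces either $D \leqslant Z(N_i)$ or $D = N_i$. The equality $D = N_1$ would give $N_1 \leqslant N_2$, and applying the same dichotomy to $N_1 \unlhd N_2$ would yield either $N_1 \leqslant Z(N_2)$ (impossible, as $N_1$ is non-abelian) or $N_1 = N_2$ (already ruled out). Hence $D \leqslant Z(N_1) \cap Z(N_2)$, so $[N_1, N_2] \leqslant D \leqslant Z(N_1)$. This gives $[[N_1, N_2], N_1] = 1 = [[N_2, N_1], N_1]$, and the Three Subgroups Lemma applied to $(N_1, N_1, N_2)$ then yields $[N_1', N_2] = 1$; since $N_1' = N_1$, we conclude $[N_1, N_2] = 1$, as required.
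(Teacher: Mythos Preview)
Your proof is correct and follows the same overall strategy as the paper: apply Lemma~\ref{NIW} twice to produce $N_1$ and $N_2$, use hypothesis~(ii) to force $W_1 \not\cong W_2$, and then argue that the two normal quasisimple subgroups commute. The paper, however, dispatches the last step by observing that $N_1$ and $N_2$ are distinct components of $G$ and invoking \cite{Isaacs}, Theorem~9.4 (distinct components commute). You instead give a self-contained argument: from $N_1 \cap N_2 \leqslant Z(N_1)$ you deduce $[N_1, N_2, N_1] = 1 = [N_2, N_1, N_1]$ and apply the Three Subgroups Lemma to obtain $[N_1', N_2] = 1$, hence $[N_1, N_2] = 1$. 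This is exactly the standard proof of the component-commuting theorem specialised to the present situation, so your route is more elementary and avoids the external reference, at the cost of a few extra lines. You also spell out explicitly why $W_1 \not\cong W_2$ (via $\bminn(W_1) \leqslant \bmaut(G_{k_1}/G_{k_1-1})$), which the paper leaves implicit.
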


\begin{proof}

By Lemma \ref{NIW}, there exist normal subgroups, $N_{1}$  and $N_{2}$ in $G$ such that $ N_{i}^{'} = N_{i}$ and $ N_{i}/Z(N_{i}) \cong W_{i}$ ( i = 1,2 ). Now $N_1$ and $N_2$ are normal quasisimple subgroups of $G$ and by condition $(ii)$ we see that $ N_1 / Z(N_1) \cong W_1 $ $ \not\cong W_2 \cong N_2 / Z( N_2 ) $. Thus $N_1$ and $N_2$ are distinct components of $G$. Hence by \cite{Isaacs}, Theorem 9.4 we see that $ [ N_1, N_2 ] = 1$.

\end{proof}

\begin{remark}\label{remarkN1N2}

We note that $N_{1}$ and $N_{2}$ , as in Lemma \ref{N1N2} above, have the following properties:
\begin{enumerate}[(i)]
\item
$ N_{1}N_{2}$ is the central product of $N_{1}$ and $N_{2}$.
\item
Letting  $ Z = Z(N_{1}N_{2}) $ we have, by the simplicity of $ N_{i}/Z(N_{i}) $, $ N_{i} \cap Z = Z(N_{i}) $ (i = 1, 2). Hence
\[  N_{i}Z/Z \cong N_{i}/Z(N_{i}) \cong W_{i}  \mbox{ (i = 1, 2).} \]
\item
If $  N_{1}Z/Z \cap N_{2}Z/Z \not= 1_{G/Z} $ then, by simplicity, we have $ N_{1}Z/Z \leqslant  N_{1}Z/Z \cap  N_{2}Z/Z $ and $ N_{2}Z/Z \leqslant  N_{1}Z/Z \cap  N_{2}Z/Z $ and so $ N_{1}Z/Z =  N_{2}Z/Z $, which is a contradiction (as $ W_{1} \not\cong W_{2} $). Thus
\[  N_{1}Z/Z \cap N_{2}Z/Z = 1_{G/Z} \] 
and hence
\begin{eqnarray*}
N_{1}N_{2}/Z & = & (N_{1}Z/Z)(N_{2}Z/Z) \\
& = & N_{1}Z/Z \times N_{2}Z/Z \\
& \cong & W_{1} \times W_{2 }.
\end{eqnarray*}
\end{enumerate}
We note that the above can be extended to the case where there are more than two chief factors that satisfy hypotheses (i) and (ii) of Lemma \ref{N1N2}.

\end{remark}

The remainder of this section is devoted to showing that not all groups of order $12$ or less can be embedded in a non-soluble group whose order is a divisor of $ 2^{5}.3^{3}.5.7.11.13$. We will refer to the following table, abstracted from \cite{Atlas} and \cite{AtlasV3}, which lists the non-abelian simple groups whose orders divide $ 2^{5}.3^{3}.5.7.11.13$. The outer automorphism group $ \bmaut(G)/\bminn(G) $ and the Schur multiplier $M$ of each group is also listed.

\begin{table}[H]

\begin{center}

\begin{tabular}{ccccc}
\hline
Order & Factorised Order&Group & $M$ & $\bmaut(G)/\bminn(G)$ \\
\hline
\rule{0pt}{3ex}

$60$&$2^{2}.3.5$&$A_{5}$&$C_{2}$&$C_{2}$\\
$168$&$2^{3}.3.7$& $ PSL(3, 2) \cong GL(3, 2) $&$C_{2}$&$C_{2}$\\
$360$&$2^{3}.3^{2}.5$&$A_{6}$&$C_{6}$&$C_{2} \times C_{2}$\\
$504$&$2^{3}.3^{2}.7$&$PSL( 2, 8)$&1&$C_{3}$\\
$660$&$2^{2}.3.5.11$&$PSL(2, 11)$&$C_{2}$&$C_{2}$\\
$1092$&$2^{2}.3.7.13$&$PSL(2, 13)$&$C_{2}$&$C_{2}$\\
$2520$&$2^{3}.3^{2}.5.7$&$A_{7}$&$C_{6}$&$C_{2}$\\
$5616$&$2^{4}.3^{3}.13$&$PSL(3, 3)$&$1$&$C_{2}$\\
$6048$&$2^{5}.3^{3}.7$&$U_{3}(3)$&$1$&$C_{2}$\\
$7920$&$2^{4}.3^{2}.5.11$&$M_{11}$&$1$&$1$\\
$9828$&$2^{2}.3^{3}.7.13$&$PSL(2, 27)$&$C_{2}$&$C_{6}$\\
\hline
\end{tabular}

\end{center}

\caption{Non-abelian finite simple groups whose order is a divisor of $ 2^{5}.3^{3}.5.7.11.13$}\label{simplegroups}

\end{table}

Lemmas \ref{2CF}-\ref{C9A6} and Corollary \ref{GDxy} now deal with specific cases of groups in which not all groups of order 8 or 9 can be embedded.

\begin{lemma}\label{2CF}

Let $G$ be a finite group whose order is a divisor of $ 2^{5}.3^{3}.5.7.11.13$. Suppose that $ 1 = G_{0} \unlhd G_{1} \unlhd \dots \unlhd G_{n} = G $ is a chief series of $G$ for which at least two distinct chief factors are not soluble. Then not all groups of order $8$ can be embedded in $G$.

\end{lemma}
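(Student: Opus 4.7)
The plan is to assume for contradiction that all five groups of order 8 embed in $G$ and to extract enough structural information to rule this out. First, Lemma \ref{pbound} together with the divisibility $|G| \mid 2^5 \cdot 3^3 \cdot 5 \cdot 7 \cdot 11 \cdot 13$ forces $|G|_2 = 32$. The preceding theorem characterising the two groups of order 32 in which all groups of order 8 embed then implies that every Sylow 2-subgroup $S$ of $G$ is isomorphic either to $C_2 \times H_1$ or to $H_2$; both of these have 2-rank exactly 3 and contain an element of order 8, so any 2-subgroup of $G$ has rank at most 3 and any subgroup of $G$ of order 32 and exponent at most 4 cannot be a Sylow 2-subgroup.

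Next I would identify the two non-soluble chief factors. Each is of the form $T^m$ for a non-abelian simple group $T$ drawn from Table \ref{simplegroups}. The constraints $|G|_p \leq p$ for $p \in \{5, 7, 11, 13\}$ force $m = 1$; the same constraints show that the two factors $W_1, W_2$ must be non-isomorphic and restrict the allowed unordered pairs to a short explicit list, splitting into a $(4,4)$-family (both $|W_i|_2 = 4$, such as $(A_5, PSL(2,13))$ and $(PSL(2,11), PSL(2,13))$) and a $(4,8)$-family (one $|W_i|_2 = 4$ and the other $= 8$, such as $(A_5, PSL(3,2))$, $(A_5, PSL(2,8))$, $(PSL(3,2), PSL(2,11))$, $(A_6, PSL(2,13))$ and $(PSL(2,8), PSL(2,11))$).

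To access the $W_i$ as concrete subgroups, I would choose a chief series of $G$ running through the soluble radical $R(G)$, with the two non-soluble factors placed immediately above $R(G)$. The prime-power bounds on $|G|$ force $|R(G)|_p \leq p$ for $p \in \{2, 3\}$ and $|R(G)|_p = 1$ for $p \in \{5, 7, 11, 13\}$, so every chief factor of $R(G)$ is cyclic of prime order and has cyclic (hence soluble) automorphism group; hypothesis (ii) of Corollary \ref{N1N2} is therefore satisfied for both $W_1$ and $W_2$. The corollary yields normal subgroups $N_1, N_2 \unlhd G$ with $[N_1, N_2] = 1$, $N_i = N_i'$ and $N_i/Z(N_i) \cong W_i$. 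Combining the Schur multiplier data in Table \ref{simplegroups} with the prime-power bounds on $|G|$ restricts each $N_i$ to being either $W_i$ itself or its unique double cover.

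The contradiction is then obtained by a finite case analysis. For each admissible pair $(W_1, W_2)$ and each admissible combination of central extensions, compute a Sylow 2-subgroup $P_{12}$ of $N_1 N_2$ using $N_1 \cap N_2 \leq Z(N_1) \cap Z(N_2)$; $P_{12}$ embeds in $S$. In every case one of three obstructions arises: either $|P_{12}| > 32$ (impossible, as $|G|_2 = 32$); or $P_{12}$ contains $C_2 \times C_2 \times C_2 \times C_2$, forcing $S$ to have 2-rank at least 4; or $P_{12}$ has order 32 and exponent at most 4, so $P_{12} = S$ and the presence of $C_8$ in $S$ is contradicted. The exponent checks are elementary: $Q_2 \times (C_2 \times C_2)$, $D_4 \times (C_2 \times C_2)$ and the central product $Q_2 \ast Q_2$ (extraspecial of order 32) all have exponent 4, while $C_2 \times C_2 \times C_2 \times C_2 \times C_2$ has exponent 2. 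The main obstacle is this finite but somewhat lengthy case analysis; a secondary delicate point is the preparatory bound on $R(G)$ needed to invoke Corollary \ref{N1N2}.
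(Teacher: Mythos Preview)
Your approach is correct in outline, but it takes a noticeably longer route than the paper's. The paper never enumerates the admissible pairs $(W_1,W_2)$ and never invokes the classification of the two order-32 groups that host all groups of order~8. Instead it argues uniformly: once one has normal quasisimple $N_1,N_2$ with $[N_1,N_2]=1$, take Sylow 2-subgroups $S_1,S_2$ and use only the facts $4\mid |S_i/(S_1\cap S_2)|$ and $|S_1S_2|\le 32$ to force $|S_1\cap S_2|\le 2$. A short case split on $|S_1\cap S_2|\in\{1,2\}$ then produces either an abelian subgroup of order~16 and exponent at most~4 (so Lemma~\ref{Habex4} applies), or a Sylow 2-subgroup of exponent at most~4, or an abelian Sylow 2-subgroup; no Schur-cover bookkeeping or pair-by-pair check is needed. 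Your plan reaches the same contradictions but via a finite table of seven pairs and their covers, and relies on the heavier Theorem of Section~\ref{order8}; the paper's argument buys brevity and avoids that dependency.

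Two small corrections to your write-up. First, the assertion that $|R(G)|_p=1$ for $p\in\{5,7,11,13\}$ is false in general (for instance if $\{W_1,W_2\}=\{A_5,PSL(2,13)\}$ nothing forbids $11\mid |R(G)|$); what is true, and sufficient, is that every chief factor other than $W_1,W_2$ has order dividing $2\cdot 3\cdot r_1\cdot r_2$ and is therefore cyclic of prime order. This is exactly how the paper verifies hypothesis~(ii) of Corollary~\ref{N1N2}, and it applies to the given chief series directly without rearranging through $R(G)$. Second, to place both non-soluble factors immediately above $R(G)$ you would need both $W_i$ to lie in the socle of $G/R(G)$, which requires a word of justification; the paper sidesteps this by working with the original chief series and checking that the cyclic automorphism groups of the remaining factors, together with $p_1\nmid|\bmaut(W_2)|$ and $p_2\nmid|\bmaut(W_1)|$, already give condition~(ii).
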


\begin{proof}

We let $ G_{k_{1}}/G_{k_{1}-1}$ and $ G_{k_{2}}/G_{k_{2}-1}$ for $ k_{1} \not= k_{2}$ be two distinct non-soluble chief factors of the above chief series. For $ i = 1, 2 $ we have
\[  G_{k_{i}}/G_{k_{i}-1} \cong W_{i_{1}} \times \dots \times W_{i_{n_{i} }}, \]
where $ W_{i_{j}} \cong W_{i}$ for $ j = 1, \dots, n_{i}$ and $W_{i}$ is a non-abelian simple group ($ i = 1, 2 $). As all $\{2, 3\}$-groups are soluble by Burnside's Theorem, there must be a prime $ p_{i} \in \{5, 7, 11, 13\} $ such that $p_{i}$ is a divisor of $|W_{i}|$ ($ i= 1, 2 $). If $ n_{i} \geq 2 $ then $ p_i^2$ is a divisor of $ |G_{k_{i}}/G_{k_{i}-1}| $ and hence of $ |G|$. As this is excluded by hypothesis, we have $ n_{i} = 1 $ (for $ i = 1, 2 $), so $ G_{k_{i}}/G_{k_{i}-1} \cong W_{i} $ is a non-abelian simple group for $i = 1, 2$. We see that $ |W_{1}| |W_{2}| = |G_{k_{1}}/G_{k_{1}-1}||G_{k_{2}}/G_{k_{2}-1}|$ is a divisor of $|G|$. In particular $ p_{1}p_{2} $ is a divisor of $|G|$, so $ p_{1} \not= p_{2}$ and $W_{1}$  and $W_{2}$ are not isomorphic.

From Table \ref{simplegroups} we see that $2^2.3.p_{i}$ is a divisor of $|W_{i}|$ (i=1, 2). As $ |W_{1}| |W_{2}|$ is a divisor of $|G|$ it follows that the remaining chief factors have orders dividing  $ \dfrac{|G|}{2^{4}.3^{2}.p_{1}.p_{2}} = 2.3.r_{1}.r_{2}$, where $ \{  r_{1}, r_{2} \} = \{ 5, 7, 11, 13 \} \backslash \{p_{1}, p_{2} \} $. But there are no groups in Table \ref{simplegroups} whose orders are factors of $ 2.3.r_{1}.r_{2} $. Thus the remaining chief factors, if any, must be of order 2, 3, 5, 7, 11 or 13. As these are all primes we see, in particular, that the automorphism groups of the remaining chief factors are cyclic.

We further see from Table \ref{simplegroups} that $ p_{1} \nmid  | \bmaut(W_{2})|  $ and $ p_{2} \nmid  | \bmaut(W_{1})|  $. Therefore the conditions for Corollary \ref{N1N2} are satisfied, so there exist normal subgroups $N_{1}$ and $N_{2}$ in $G$ such that $ [N_{1}, N_{2}] = 1 $ and, for i = 1, 2, $ N_{i}^{'} = N_{i} $ and $ N_{i}/Z(N_{i}) \cong W_{i} $. By Remark \ref{remarkN1N2} we have
\begin{eqnarray*}
N_{1}N_{2}/Z(N_{1}N_{2}) & = & N_{1}Z(N_{1}N_{2})/Z(N_{1}N_{2}) \times N_{2}Z(N_{1}N_{2})/Z(N_{1}N_{2})\\
& \cong & W_{1} \times W_{2}.
\end{eqnarray*}
We further note that, since $ 4 \mid |W_{i}|$  ( for $ i =1 ,2 $), we have $ 16 \mid  |N_{1}N_{2}/Z(N_{1}N_{2})| $.

We let $S_{i}$ be a Sylow 2-subgroup of $N_{i}$ (for $ i = 1, 2 $). Since $ [N_{1}, N_{2}] = 1 $ we have $ [S_{1}, S_{2}] = 1 $ so $ S_{1}S_{2} $ is a group, and thus a Sylow 2-subgroup of $ N_{1}N_{2} $. Since $ N_{1}N_{2} $ is a central product we have
\[  S_{1} \cap S_{2} \leqslant Z(N_{1}N_{2}) \cap S_{1}S_{2}.\]
Now $ S_{i}Z(N_{1}N_{2})/ $ $Z(N_{1}N_{2}) $ ($ \cong S_{i}/(S_{i} \cap Z(N_{1}N_{2})) $) is a Sylow 2-subgroup of $ N_{i}Z(N_{1}N_{2})/ $$Z(N_{1}N_{2}) $ $ (\cong W_{i})$, for $  i = 1, 2$. Thus $4$ is a factor of $ |S_{i}/(S_{i} \cap Z(N_{1}N_{2}))| $ and, since $ S_{1} \cap S_{2} \leqslant N_{1} \cap N_{2} $ $ \leqslant Z(N_{1}N_{2}) $, we see that $4$ is also a factor of $ | S_{i}/(S_{1} \cap S_{2})| $. Therefore $ |S_{i}| \geq 4|S_{1} \cap S_{2}|$ and 
\begin{eqnarray*}
|S_{1}S_{2}| & = & \dfrac{ |S_{1}||S_{2}| }{ |S_{1} \cap S_{2}|}\\
& \geq & \dfrac{4| S_{1} \cap S_{2} |.4| S_{1} \cap S_{2}|}{ | S_{1} \cap S_{2}| }\\
& = & 16|  S_{1} \cap S_{2} |.
\end{eqnarray*}
Furthermore since $ 2^{5} \top |G| $, we have $ |S_{1}S_{2}| \leq 32 $, and so $ | S_{1} \cap S_{2} | \leq 2 $.

If $ | S_{1} \cap S_{2} | = 2 $ then $ | S_{1}S_{2} | = 32 $ and $|S_{i}| = 8 $ ($ i = 1, 2 $). In particular $ S_{1}S_{2} $ is a Sylow 2-subgroup of $G$. If both $S_{1}$ and $S_{2}$ have exponent at most $4$ then the central product $S_{1}S_{2}$ has exponent at most $4$. In this case $C_{8}$ cannot be embedded in $G$. If, say, $S_{1}$ has exponent $8$ then $ S_{1} \cong C_{8} $ and, in particular, $ S_{1} \leqslant Z(S_{1}S_{2}) $. If $ C_{2} \times C_{2} \times C_{2} $ can be embedded in $G$ then $ S_{1}S_{2}$ will contain a subgroup $ V \cong C_{2} \times C_{2} \times C_{2} $. By comparison of orders,  $ S_{1}S_{2} $ is then the product of the central subgroup $S_{1}$ and the abelian subgroup $V$, so $ S_{1}S_{2} $ is abelian. Hence the non-abelian groups of order $8$ cannot be embedded in $G$.

If $ |S_{1} \cap S_{2}| = 1$ then $ S_{1}S_{2} =  S_{1} \times S_{2} $ and $ |S_{1}S_{2}| =  |S_{1}||S_{2}| $. If, say, $ |S_{1}| = 8 $ then, since $ |S_{i}| \geq 4$ (i = 1, 2), we have $ |S_{2}| = 4 $, so $S_{2}$ is abelian. Since $ |S_1| = 8 $ there exists a subgroup $ K \leqslant S_{1} $ such that $ |K| = 4 $. Then $K$ is abelian and $KS_{2} = K \times S_2 $ is an abelian subgroup of order $16$ and exponent at most $4$ in $G$. We can thus apply Lemma \ref{Habex4} to see that not all groups of order $8$ can be embedded in $G$.

Finally, if $ |S_{1} \cap S_{2}| = 1$ and $|S_{1}| = |S_{2}| = 4 $, then $ S_1S_2$ is abelian of order 16 and exponent at most $4$. We may again apply Lemma \ref{Habex4} to see that not all groups of order $8$ can be embedded in $G$.

\end{proof}

\begin{lemma}\label{HDx}

Let $H$ be a group of order 16 such that $H = D\langle y \rangle $ where
\begin{enumerate}[(i)]
\item
$ D \cong D_4 $ and $ \langle y \rangle \cong C_2 $;
\item
$ D \cap \langle y \rangle = 1 $;
\item
$ C_H(D) = Z(D)$.
\end{enumerate}
Then $H$ has a unique cyclic subgroup of order $4$. In particular, the quaternion group of order $8$ cannot be embedded in $H$.

\end{lemma}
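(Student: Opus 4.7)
The plan is to analyse the conjugation action of $y$ on $D$ and show that the hypothesis $C_H(D) = Z(D)$ forces this action to be a specific type of outer involution, after which a direct calculation identifies all cyclic subgroups of order $4$ in $H$.

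I would begin by noting that $|H : D| = 2$, so $D \unlhd H$. Writing $D = \langle a, b \mid a^4 = b^2 = 1, \ bab = a^{-1} \rangle$, the subgroup $\langle a \rangle$ is the unique cyclic subgroup of order $4$ in $D$, hence characteristic in $D$ and normal in $H$. Since $D \cap \langle y \rangle = 1$ we have $y^2 = 1$, so conjugation by $y$ induces an automorphism $\phi \in \bmaut(D)$ with $\phi^2 = \mathrm{id}$. Moreover $\phi$ is non-trivial, since otherwise $y \in C_H(D) \setminus Z(D)$, contradicting the hypothesis.

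The key step is to show that $\phi$ is an \emph{outer} automorphism of $D$. If $\phi$ were the inner automorphism induced by some $d_0 \in D$, then $d_0^{-1} y$ would centralise $D$; but $d_0^{-1} y \notin D$ (as $y \notin D$), so $d_0^{-1} y \in C_H(D) \setminus Z(D)$, contradicting the hypothesis. The outer involutions of $D_4$ are then easy to enumerate: since $\langle a \rangle$ is characteristic, $\phi$ must send $a$ to $a^{-1}$, and since an outer automorphism must swap the two conjugacy classes of non-central involutions of $D$, we must have $\phi(b) = a^j b$ with $j \in \{1, 3\}$, in particular $j$ odd.

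Having pinned down $\phi$, I would compute the squares of the eight elements of $H \setminus D$ using the identity $(dy)^2 = d\,\phi(d)$, which holds because $y^2 = 1$. For $d = a^i$ this gives $(a^i y)^2 = a^i \cdot a^{-i} = 1$, so every $a^i y$ has order at most $2$. For $d = a^i b$ a short calculation using $\phi(a^i b) = a^{j-i} b$ yields $(a^i b y)^2 = a^{2i - j}$, which (since $j$ is odd) is an element of order $4$ lying in $\langle a \rangle$; hence each $a^i b y$ has order $8$. Combined with the fact that every element of $D \setminus \langle a \rangle$ is an involution, this shows that the only elements of $H$ of order $4$ are $a$ and $a^3$, both in $\langle a \rangle$. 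Thus $\langle a \rangle$ is the unique cyclic subgroup of order $4$ in $H$, and since $Q_2$ contains three distinct cyclic subgroups of order $4$, it cannot be embedded in $H$.

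The main obstacle is pinning down $\phi$; the argument that $\phi$ must be outer, via the observation that an inner $\phi$ would yield an element of $C_H(D) \setminus D$ centralising $D$, is the conceptual core of the proof, while the subsequent computations of squares are routine.
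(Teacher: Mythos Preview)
Your proof is correct and takes a cleaner, more conceptual route than the paper's. One small point deserves another sentence of justification: the clause ``since $\langle a \rangle$ is characteristic, $\phi$ must send $a$ to $a^{-1}$'' is not quite complete as stated, since characteristicity alone only gives $\phi(a)\in\{a,a^{-1}\}$. The conclusion $\phi(a)=a^{-1}$ follows once you have $\phi(b)=a^{j}b$ with $j$ odd, because $\phi^{2}(b)=\phi(a)^{j}a^{j}b$ and $\phi^{2}=\mathrm{id}$ force $\phi(a)^{j}=a^{-j}$, hence $\phi(a)=a^{-1}$ (as $j$ is coprime to $4$). With that filled in, the argument is watertight.

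By contrast, the paper's proof is more computational. It first uses $|\bmaut(D_4)|=8$ and $C_H(D)=Z(D)$ to deduce $H/Z(D)\cong D_4$ by order count, then shows $\Phi(H)$ equals the cyclic subgroup of order $4$, pins down the action of $y$ via commutator calculations (deriving $b^{y}=b^{-1}$ and $a^{y}\in\{ab,ab^{-1}\}$ in its convention), and finally runs a case analysis on $h=a^{\alpha}b^{\beta}y^{\gamma}$ to show $h^{2}$ never equals the central involution unless $h$ lies in the cyclic subgroup. Your single observation---that if $\phi$ were inner by $d_0$ then $d_0^{-1}y$ would lie in $C_H(D)\setminus Z(D)$---replaces the Frattini and commutator analysis entirely, and the uniform identity $(dy)^{2}=d\,\phi(d)$ replaces the case split. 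As a side benefit, your computation shows that the elements $a^{i}by$ actually have order $8$ (so in fact $H\cong D_8$), whereas the paper only verifies that no element outside $\langle a\rangle$ has order $4$.
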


\begin{proof}
Since $ |H : D| = 2 $ we see that $ D \unlhd H $. We let $D$ be as follows:
\[ D = \langle \ a, \ b \mid a^2 = b^4 = 1, \ aba = b^{-1} \ \rangle. \]
Since $ \bmaut(D_4) \cong D_4 $ and $ C_H(D) = Z(D)$ ($\cong C_2$) we have (by comparison of orders):
\[ H/C_H(D) = H/Z(D) = H/\langle b^2 \rangle \cong D_4. \]
Now $ \langle b \rangle $ is characteristic in $D$ and so is normal in $H$. Both $y$ and $a$ have order $2$ and $ y \not\in \langle a, b \rangle $, so $ y \langle b \rangle $ and $ a \langle b \rangle $ are distinct elements of order 2 in $ H/\langle b\rangle $. In addition $ |H/\langle b \rangle| = 4 $, so $H/\langle b \rangle $ is abelian and hence elementary abelian. Thus $ \Phi(H) \leqslant \langle b \rangle $.
In addition we have $ \langle b^2 \rangle = \Phi( \langle b \rangle ) \leqslant \Phi(H) $, so, if $ \Phi(H) \neq \langle b \rangle $ we must have $ \Phi(H) = \langle b^2 \rangle $. Then $ H/\langle b^2 \rangle $ is elementary abelian, in contradiction to  $ H/\langle b^2 \rangle \cong D_4 $. Hence
\[ \Phi(H) = \langle b \rangle. \]
Since $ H = \langle \ a, \ b, \ y \ \rangle $ we then have $ H = \langle \ a, \ y \ \rangle $ and, since $ \Phi(H) $ is cyclic, we see that $ H^{'} = \langle [a, y] \rangle $. If $ [a, y] \in \langle b^2 \rangle $ then $ H / \langle b^2 \rangle $ is abelian, which contradicts $ H/ \langle b^2 \rangle \cong D_4 $. Therefore $ [a, y] \in \langle b \rangle \backslash \langle b^2 \rangle $, so either $ [a, y] = b $ or $ [a, y] = b^{-1} $. If $ [a, y] = b $ then $ a^y = ab $. If $ [a, y] = b^{-1} $ we likewise have $ a^y = ab^{-1} $.

Since $ \langle b \rangle \unlhd H $ and $ \langle b \rangle \cong C_4 $ we have either $ b^y = b $ or $ b^y = b^{-1} $. If $ b^y = b $
 and $ a^y = ab $ then
\[  a = a^{y^2} =(a^y)^y = (ab)^y = a^yb^y = abb = ab^2, \]
which is a contradiction since $ b^2 \neq 1 $. Similarly if $ b^y = b $ and $ a^y = ab^{-1} $ we have
\[  a = a^{y^2} = ab^{-2}, \]
and once more a contradiction ensues. We thus conclude
\[  b^y = b^{-1}. \]

We now determine the elements of order $4$ in $H$. If $ h \in H $  and $ o(h) = 4 $, then $ h^2 $ is an element of order $2$ in $ \Phi(H) = \langle b \rangle $. Since $ \langle b^2 \rangle $ is the unique element of order $2$ in $ \langle b \rangle $ we see that $ o(h) = 4 $ if and only if  $ h^2 = b^2 $. We let
\[  h = a^{\alpha}b^{\beta}y^{\gamma},\]
 for suitable $ \alpha, \ \beta \ \mbox{and} \ \gamma $. If $ \gamma \equiv 0 $ (mod 2) then $ h = a^{\alpha}b^{\beta} \in \langle \ a, \ b \ \rangle \cong D_4 $, so $ h \in \langle b \rangle $. If $ \gamma \equiv 1 $ (mod 2) and $ \alpha \equiv 0 $ (mod 2) then $ h = b^{\beta}y $. In this case $ o(h) = 4 $ if and only if 
\begin{eqnarray*}
b^2 & = & h^2\\
& = & (b^{\beta}y)^2\\
& = & (b^{\beta}y)(b^{\beta}y)\\
& = & b^{\beta}(b^{\beta})^y\\
& = & b^{\beta}b^{-\beta} = 1,
\end{eqnarray*}
which is a contradiction.
If $ \gamma \equiv 1 $ (mod 2) and $ \alpha \equiv 1 $ (mod 2) then $ h = ab^{\beta}y $, so $ o(h) = 4 $ if and only if
\begin{eqnarray*}
b^2 &  = & (ab^{\beta}y)^2\\
& = & ab^{\beta}yab^{\beta}y\\
& = & ab^{\beta}a^y(b^{\beta})^y\\
& = & ab^{\beta}a^yb^{-\beta}.
\end{eqnarray*}
If, in addition,  $ a^y = ab $ then
\begin{eqnarray*}
b^2 & = & ab^{\beta}abb^{-\beta}\\
& = & b^{-\beta}bb^{-\beta}\\
& = & bb^{-2\beta}
\end{eqnarray*}
and the contradiction $ b = b^2b^{2\beta} \in \langle b^2 \rangle $ ensues. Finally if $ \gamma \equiv  \alpha \equiv 1 $ (mod 2) and $ a^y =ab^{-1} $, then $ o(h) = 4 $ if and only if
\begin{eqnarray*}
b^2 & = & ab^{\beta}ab^{-1}b^{-\beta}\\
& = & b^{-\beta}b^{-1}b^{-\beta}\\
& = & b^{-1}b^{-2\beta},
\end{eqnarray*}
so $ b^{-1} = b^2b^{2\beta} \in \langle b^2 \rangle $, which is again a contradiction.

Thus the only elements of order 4 in $H$ are $b$ and $b^{-1}$, so we conclude that $ \langle b\rangle $ is the unique cyclic subgroup of order $4$ in $H$.

\end{proof}

\begin{corollary}\label{GDxy}

Let $G$ be a group of order 32 such that $ G = ( D \times \langle x \rangle ) \langle y \rangle $, where
\begin{enumerate}[(i)]

\item
$ \langle x \rangle \unlhd G $;

\item
$ \langle x \rangle \cong C_2 $;

\item
$D \cong D_4 $;

\item
$ y^2 \in \langle x \rangle $.
\end{enumerate}
If $Q_2$ can be embedded in $G$ then $G$ has exponent 4 and, in particular, $C_8 $ cannot be embedded in $G$.

\end{corollary}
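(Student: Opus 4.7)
The plan is to pass to the quotient $\bar G := G/\langle x \rangle$, apply Lemma \ref{HDx}, and then convert the resulting structural information into an exponent computation in $G$. Write $E = D \times \langle x \rangle$ (of order 16 and exponent 4) and fix the presentation $D = \langle \ a, \ b \mid a^{2} = b^{4} = 1, \ aba = b^{-1} \ \rangle$.

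First I would show that any $Q \leqslant G$ with $Q \cong Q_{2}$ yields an embedding $Q_{2} \leqslant \bar G$. A direct count exhibits only four elements of order $4$ in $E \cong D_{4} \times C_{2}$, namely $b^{\pm 1}$ and $b^{\pm 1}x$, each squaring to $b^{2}$; since $Q_{2}$ has six elements of order $4$, $Q \not\leqslant E$. The bound $| Q \cap E | \geq |Q||E|/|G| = 4$ then forces $Q \cap E$ to be one of the cyclic subgroups $\langle b \rangle$ or $\langle bx \rangle$, both with unique involution $b^{2}$. Hence $Z(Q) = \langle b^{2} \rangle \neq \langle x \rangle$, so $Q \cap \langle x \rangle = 1$ and $Q_{2} \cong Q\langle x \rangle/\langle x \rangle$ embeds in $\bar G$.

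Next I would apply Lemma \ref{HDx} to $\bar G = \bar D \langle \bar y \rangle$. The conditions $\bar D \cong D_{4}$, $\langle \bar y \rangle \cong C_{2}$ and $\bar D \cap \langle \bar y \rangle = 1$ follow from $y^{2} \in \langle x \rangle$ and $y \notin E$; if one also had $C_{\bar G}(\bar D) = Z(\bar D)$, the embedding of $Q_{2}$ just constructed would contradict Lemma \ref{HDx}. Therefore $C_{\bar G}(\bar D) \supsetneq Z(\bar D)$, and as $| C_{\bar G}(\bar D) \bar D | = 4| C_{\bar G}(\bar D) |$ cannot exceed $|\bar G| = 16$, we must have $| C_{\bar G}(\bar D) | = 4$. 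Thus there exists $\bar y' \in \bar G \setminus \bar D$ centralising $\bar D$ with $\bar y'^{2} \in Z(\bar D) = \langle \overline{b^{2}} \rangle$. Any lift $y' \in G$ then lies in $G \setminus E$, so that $G = E\langle y' \rangle$, and satisfies $y' e y'^{-1} \in e\langle x \rangle$ for every $e \in E$ together with $y'^{2} \in \langle b^{2}, x \rangle = Z(E)$.

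Since $E$ has exponent $4$, only the coset $Ey'$ needs checking. Using that squares in $D_{4} \times C_{2}$ all lie in $\langle b^{2} \rangle$, for $e \in E$ we have
\[ (ey')^{2} = e \cdot ( y' e y'^{-1} ) \cdot y'^{2} \in e^{2} \langle x \rangle \cdot \langle b^{2}, x \rangle \subseteq \langle b^{2}, x \rangle . \]
As $\langle b^{2}, x \rangle$ is elementary abelian, $(ey')^{4} = 1$. Hence $G$ has exponent $4$ and $C_{8}$ cannot be embedded in $G$. The principal obstacle is the transition $y \leadsto y'$: the hypothesis only prescribes $y^{2} \in \langle x \rangle$, but the argument needs an element of $G \setminus E$ whose conjugation action on $E$ is trivial modulo $\langle x \rangle$ \emph{and} whose square lies in the slightly larger central subgroup $Z(E)$; that both properties can be realised simultaneously is precisely the structural content of $C_{\bar G}(\bar D) \setminus Z(\bar D) \neq \emptyset$.
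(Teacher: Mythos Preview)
Your proof is correct and follows essentially the same route as the paper: pass to $\bar G = G/\langle x\rangle$, show $Q_2$ embeds there, invoke Lemma~\ref{HDx} contrapositively to force $C_{\bar G}(\bar D)$ to have order $4$, and then deduce that $G$ has exponent $4$. The only difference is in the final step: the paper observes that $\bar G$ is the central product of $\bar D$ and $C_{\bar G}(\bar D)$, hence has exponent $4$, and then derives a contradiction from $g^4 \in D' \cap \langle x\rangle = 1$ for any hypothetical $g$ of order $8$; you instead choose a lift $y'$ of a centralising element and compute $(ey')^2 \in \langle b^2, x\rangle$ directly. Both executions are clean and of comparable length.
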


\begin{proof}

We let $ Q \leqslant G $ be such that $ Q \cong Q_2 $. Then $ | Q \cap ( D \times \langle x \rangle ) | \geq 4 $, as otherwise $ |G| \geq | Q( D \times \langle x \rangle ) | \geq 64$. Now $ D \times \langle x \rangle \cong D_4 \times C_2 $ has exactly four elements of order 4 and $Q_2$ has six elements of order 4. Hence $ Q \not\leqslant D \times \langle x \rangle $, so $ | Q \cap ( D \times \langle x \rangle ) | = 4 $. Since every proper subgroup of $Q_2$ is cyclic we let $ Q \cap ( D \times \langle x \rangle ) = \langle x_1 \rangle $, where $ o(x_1) = 4$. As the square of every element in $ D_4 \times C_2 $ is contained in $ \Phi ( D_4 \times C_2 ) = D_4^{'} $, we have $ \langle x_1^2 \rangle = D^{'} \cong C_2 $. It follows that $ Q \cap \langle x \rangle = 1 $. Thus $ Q \langle x \rangle / \langle x \rangle \cong Q $, so $Q_2$ can be embedded in $ G/ \langle x \rangle $.

Now $ D\langle x \rangle/\langle x \rangle \cong D_4 $ and $ \langle \ y, \ x \ \rangle / \langle x \rangle \cong C_2 $. Hence, since $ | G / \langle x \rangle | = 16 $, we see, by comparison of orders, that $ D \langle x \rangle / \langle x \rangle \cap \langle \ y, \ x \ \rangle / \langle x \rangle = 1_{ G / \langle x \rangle } $. Since $Q_2$ can be embedded in $ G/ \langle x \rangle $, we see by Lemma \ref{HDx} that $ Z(D \langle x \rangle / \langle x \rangle ) $ is a proper subgroup of $ C_{ G / \langle x \rangle }( D \langle x \rangle / \langle x \rangle ) $. Now $ | D \langle x \rangle / \langle x \rangle \cap C_{ G / \langle x \rangle }( D \langle x \rangle / \langle x \rangle ) | = | Z( D \langle x \rangle / \langle x \rangle )| = 2 $ so, by comparison of orders, we have $ | C_{ G / \langle x \rangle }( D \langle x \rangle / \langle x \rangle ) |  = 4 $ and see that $ G / \langle x \rangle $ is the central product of $ D \langle x \rangle / \langle x \rangle $  and $ C_{ G / \langle x \rangle }( D \langle x \rangle / \langle x \rangle ) $. But $ D \langle x \rangle / \langle x \rangle $ has exponent 4 and $ C_{ G / \langle x \rangle }( D \langle x \rangle / \langle x \rangle ) $ has exponent 2 or 4. Hence we conclude that $ G / \langle x \rangle $ has exponent 4.

If $G$ does not have exponent 4, then we let $ g \in G $ be such that $ o(g) = 8 $. We have $ g^2 \in D \times \langle x \rangle $ (since $ | G : D \times \langle x \rangle | = 2 $) so, as above, $ \langle g^4 \rangle = \Phi( D \times \langle x \rangle) = D^{'}$. But, since $ G/ \langle x \rangle $ has exponent 4, we also have $ g^4 \in \langle x \rangle $ and the contradiction $ 1 \neq  g^4 \in D^{'} \cap \langle x \rangle = 1 $ arises.

\end{proof}

\begin{lemma}\label{GA4N}

Let $G$ be a finite group such that
\begin{enumerate}[(i)]

\item
$ 2^5 \top |G| $;

\item
$G$ has a subgroup, $U$, with $ U \cong A_4 $;

\item
$G$ has a normal subgroup, $N$, with the following properties:

\begin{enumerate}

\item
$ N = N^{'}  $;

\item
$ Z(N/Z(N)) = 1 $;

\item
$ 4 \mid |N/Z(N)| $;

\item
either $ 2 \nd |Z(N)| $ or $ 2 \top |Z(N)| $;

\item
$ 16 \nmid |\bmaut(N/Z(N))| $;

\item
$ 3 \nd |\bmaut(N/Z(N))/\bminn(N/Z(N))| $.

\end{enumerate}

\end{enumerate}
Then not all groups of order $8$ can be embedded in $G$.

\end{lemma}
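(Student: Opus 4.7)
My plan is to use the structural hypotheses on $N$ to pin down the shape of a Sylow 2-subgroup $S$ of $G$ of order $32$, and then to invoke either Lemma \ref{Habex4} or Corollary \ref{GDxy} to rule out the embedding of all groups of order 8.

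First set $C = C_G(N) \unlhd G$. By Corollary \ref{CGN} we have $C = C_G(N/Z(N))$, so $G/C$ embeds in $\bmaut(N/Z(N))$ with $NC/C$ identified with $\bminn(N/Z(N)) \cong N/Z(N)$ (using (iii)(b)). Hypothesis (iii)(f) then forces every 3-element of $G$ to lie in $NC$, and since $U \cong A_4$ is generated by its elements of order 3, we obtain $U \leqslant NC$ and in particular $V := U' \leqslant NC$. Fix a Sylow 2-subgroup $S$ of $G$ containing $V$, and put $T = S \cap N$, $S_C = S \cap C$. Both are normal in $S$, and $T \cap S_C = S \cap Z(N)$ has order at most $2$ by (iii)(d). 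Moreover $TS_C = S \cap NC$ contains $V$ and has index at most $2$ in $S$.

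Second, extract the numerical constraints. Since $\bminn(N/Z(N)) \leqslant \bmaut(N/Z(N))$, hypothesis (iii)(e) gives $|N/Z(N)|_2 \leqslant 8$, and combined with (iii)(c) this forces $|T| \in \{4,8,16\}$. The hypotheses (iii)(a)--(iii)(b) ensure that $N$ is perfect with $N/Z(N)$ having trivial center, so $T$ is either abelian of exponent at most $4$ or isomorphic to the quaternion group $Q_{2}$ (the latter occurring only when $|Z(N)|_2 = 2$ and $N$ is a perfect double cover).

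Third, split the argument on the structure of $T$. When $T$ is abelian of order $16$ and exponent at most $4$, Lemma \ref{Habex4} applies to $T$ directly and yields the conclusion. When $T$ is abelian of smaller order, the product $TS_C$, possibly extended by a further involution of $S \setminus TS_C$ that centralises a large enough abelian piece, still yields an abelian subgroup of $S$ of order $16$ and exponent at most $4$, again invoking Lemma \ref{Habex4}. When $T \cong Q_{2}$, one chooses $x \in S_C \setminus Z(N)$ and $y \in S \setminus TS_C$ to realise a suitable subgroup of $S$ in the form $(D \times \langle x \rangle)\langle y \rangle$ with $D \cong D_4$ sitting inside $T\langle x \rangle$ and with $y^2 \in \langle x \rangle$; then Corollary \ref{GDxy} forces $S$ to have exponent at most $4$ whenever $Q_{2}$ embeds in $G$, so $C_8 \not\leqslant G$.

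The principal obstacle will be the case $T \cong Q_{2}$: one must verify that the explicit decomposition required by Corollary \ref{GDxy} is always available, i.e.\ that a suitable pair $(x,y)$ with $y^2 \in \langle x \rangle$ can be chosen, and that the $D_4$-factor $D$ genuinely embeds rather than appearing only as a twisted central product inside $T\langle x\rangle$. This depends on a fine analysis of how $S/TS_C$ acts on $T$ under the constraints imposed by (iii)(e) and (iii)(f), and it is the technical heart of the argument.
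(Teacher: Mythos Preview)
Your proposal has two genuine gaps.

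First, the assertion that $T = S \cap N$ must be ``either abelian of exponent at most $4$ or isomorphic to $Q_2$'' does not follow from hypotheses (iii)(a)--(b). Those conditions say only that $N$ is perfect and $N/Z(N)$ is centreless; together with the order bounds from (iii)(c)--(e) they give $|T|\in\{4,8,16\}$, but they impose no finer structural restriction on a Sylow $2$-subgroup of $N$. The lemma is stated and proved in the paper abstractly, without any classification of $N/Z(N)$, so you cannot appeal to one.

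Second, your plan for the case $T \cong Q_2$ fails outright. You propose to locate $D \cong D_4$ inside $T\langle x\rangle$ with $x \in S_C \setminus Z(N)$; but $x$ centralises $N$, so $T\langle x\rangle \cong Q_2 \times C_2$. This group has exactly three involutions, whereas $D_4$ has five, so $D_4$ does not embed in $T\langle x\rangle$ and the decomposition required by Corollary~\ref{GDxy} is unavailable. Your remark that this ``depends on a fine analysis'' understates the problem: the obstruction is not technical but structural.

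The paper avoids both issues by never attempting to classify $T$. Its pivot is the Klein four-group $U' \leqslant U \cong A_4$. After showing $U \leqslant NC$, it splits according to whether $2^2 \top |C|$ or $2^3 \top |C|$ and whether $Z(N)$ is trivial; in each sub-case minimal normality of $U'$ in $U$ forces $U' \leqslant N$ or $U' \leqslant C$ (or reduces the remaining configuration directly), and then $U'Z(N)$ together with a subgroup of order $4$ from the other factor of the central product $S_1S_2$ yields an abelian subgroup of order $16$ and exponent at most $4$. Lemma~\ref{Habex4} then finishes every case uniformly; Corollary~\ref{GDxy} is never invoked here.
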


\begin{proof}

We let $ Z(N) = A \times B $, where $A$ is a 2-group and $B$ is a $2^{'}$-group. Then $ B \unlhd G $ and $ U \cap B \leqslant O_{2^{'}}( U ) = 1 $. In addition the Sylow 2-subgroups of $ G/B $ are isomorphic to those of $G$ and, since $ Z( N / Z(N) ) = 1 $, we see that $ Z(N / B) = Z(N) / B $ and it follows that $ Z( (N / B ) / Z( N / B ) ) = Z( (N / B) / ( Z(N) / B ) ) \cong Z( N / Z(N) ) = 1 $. Hence we may work "modulo $B$" and assume, without loss of generality, that $ B = 1 $.

We let $ C/Z(N) = C_{G/Z(N)}(N/Z(N)) $. We have $ C \unlhd G $ and, by $(iii) (b)$, $ C \cap N = Z(N) $. By Corollary \ref{CGN} we have $ C = C_G(N) $. We let $S_1$ be a Sylow 2-subgroup of $N$ and let $S_2$ be a Sylow 2-subgroup of $C$. Then $ S_1S_2 $ is a central product, with $ S_1 \cap S_2 = Z(N) $. By $(iii) (b) $ we further have
\[ N/Z(N) = N/(N \cap C) \cong NC/C \cong \bminn(N/Z(N)). \]
Hence $ G/NC \cong (G/C)/(NC/C) $ is isomorphic to a subgroup of $ \bmaut(N/Z(N))/$$\bminn(N/Z(N)) $. Since $ 3 \nmid | \bmaut(N/Z(N))/\bminn(N/Z(N)| $ we have $ 3 \nmid | G/NC| $ so $ O^{3^{'}}(G) \leqslant NC $. In particular $ U \leqslant NC$. Furthermore, since $ 2^5 \top |G| $ and $ 16 \nmid |\bmaut(N/Z(N))| $, we have $ 4 \mid |C| $.  In addition, since $ 4 \mid |N/Z(N)| $, we see that either $ 2^2 \top |C| $ or $ 2^3 \top |C| $.

We first suppose that $ 2^2 \top |C| $. Then $ |S_2| = 4$. If $ Z(N) = 1 $ then $ S_1S_2 = S_1 \times S_2 $. Since $ 4 \mid |N/Z(N)| $ we see that $S_1$ has a subgroup of order $4$, $H$ say. Then $ H  \times S_1 $ has order $2^4$ and exponent at most $4$. As $H$ and $S_1$ are both abelian, we apply Lemma \ref{Habex4} to see that not all groups of order $8$ can be embedded in $G$. If $ Z(N) \not= 1 $, then by ($iii$)($d$) we have $ Z(N) \cong C_2 $, and so $ 2 \top |C/Z(N)| $. If $ U^{'} \cap N = 1 $ then
\begin{eqnarray*}
U^{'} & \cong & U^{'}N/N\\
& \cong & ( U^{'}N/Z(N))/(N/Z(N))\\
& \leqslant & (NC/Z(N))/(N/Z(N))\\
& \cong & NC/N\\
& \cong & C/(N \cap C) = C/Z(N).
\end{eqnarray*}
But $ U^{'} \cong C_2 \times C_2 $ and $ 2 \top |C/Z(N)| $, so this cannot be. Thus $ U^{'} \cap N \not= 1 $. But $ U^{'} \cap N \unlhd U \cong A_4 $ so, by minimal normality, $ U^{'} \cap N = U^{'} $. It follows that $ U^{'}Z(N) \leqslant N$. Since $ U^{'} \cap Z(N) \unlhd U $ and $ Z(N) \cong C_2 $, it again follows by the minimal normality of $U^{' }$ in $U$, that $ U^{'} \cap Z(N) = 1 $. Hence $ U^{'}Z(N) = U^{'} \times Z(N) \cong C_2 \times C_2 \times C_2 $ . Now $ |S_2| = 4 $, so $S_2$ is abelian. Since $ S_2 \cap (U^{'} Z(N)) \leqslant S_1 \cap S_2 = Z(N) \cong C_2 $, we see that $ S_2(U^{'}Z(N)) $ is a central product of abelian groups, has order 16, and has exponent at most 4. By Lemma \ref{Habex4} we see that not all groups of order $8$ can be embedded in $G$.

We may now assume that $ 2^3\top |C| $. Then $ |S_2| = 8 $. If $ Z(N) = 1 $ then $ S_1S_2 = S_1 \times S_2 $. From ($iii$)($c$) we have $ |S_1| \geq 4 $, so $ |S_1| = 4 $ and we see that $ S_1 \times S_2 $ is a Sylow 2-subgroup of $G$. If $C_8$ can be embedded in $G$ then $S_2$ must have exponent $8$, so $ S_2 \cong C_8 $. Then $ S = S_1 \times S_2 $ is abelian, so the non-abelian groups of order $8$ cannot be emebdded in $G$. If $Z(N) \neq 1 $, then $ Z(N) \cong C_2 $. As in the preceding paragraph, if $ U^{'} \cap N = 1 $ and $ U^{'} \cap C = 1$, then $ C_2 \times C_2 $ can be embedded in $ C/Z(N) $ and, similarly, in $ N/Z(N) $. By comparison of orders, we have
\begin{eqnarray*}
S_1S_2/Z(N) & = & S_1/Z(N) \times S_2/Z(N)\\
& \cong & C_2 \times C_2 \times C_2 \times C_2.
\end{eqnarray*}
Since $Z(N) \cong C_2 $ we then see that the Sylow 2-subgroups of $G$ have exponent at most $4$, so $C_8$ cannot be embedded in $G$.

Thus we may assume that either $ U^{'} \cap C \neq 1 $ or $ U^{'} \cap N \neq 1 $. Suppose that $ U^{'} \cap C \neq 1 $. Then, by minimal normality, we have $ U^{'} \leqslant C $ and, as above, $ U^{'}Z(N) = U^{'} \times Z(N) \cong C_2 \times  C_2 \times  C_2 $. Hence $ S_2 = U^{'}Z(N) $. As above, letting $ H \leqslant S_1 $ be such that $ |H| = 4 $, we see that $ HS_2 $ is an abelian subgroup of order $16$ and exponent at most $4$. Again we apply Lemma \ref{Habex4} to see that not all groups of order $8$ can be embedded in $G$. Finally, if $ U^{'} \cap N \neq 1 $ we likewise see that $ U^{'}Z(N) \leqslant N $ and $ U^{'}Z(N) \cong C_2 \times C_2 \times C_2 $. Taking the (central) product of $ U^{'}Z(N) $ with a subgroup of order $4$ in $S_2$, we once more apply Lemma \ref{Habex4} to conclude that not all groups of order $8$ can be embedded in $G$.

\end{proof}

\begin{lemma}\label{C9A6}

Let $G$ be a finite group such that $ 3^3 \top |G| $ and let $H$ be a subgroup of $G$ such that 
\begin{enumerate}[(i)]
\item
$ 3 \mid | Z(H) | $; 
\item
$ H/Z(H) $ has a subgroup isomorphic to $A_6$.
\end{enumerate}
Then $C_9$ cannot be embedded in $G$.
\end{lemma}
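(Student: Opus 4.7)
The strategy is to argue by contradiction: suppose $C_9$ embeds in $G$, pin down the isomorphism type of a common Sylow $3$-subgroup $P$ of $G$, $H$, and of a suitable subgroup $K\leqslant H$ with $K/Z(H)\cong A_6$, and then derive a contradiction from an irreducibility property of the action of $N_K(P)$ on $P/(P\cap Z(H))$.

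First, some Sylow bookkeeping: choose $K$ with $Z(H)\leqslant K\leqslant H$ and $K/Z(H)\cong A_6$. Since the $3$-part of $|A_6|$ is $9$ and $3$ divides $|Z(H)|$, the $3$-part of $|K|$ is at least $27$, and since $3^3\top|G|$ we deduce that the $3$-part of $|Z(H)|$ is exactly $3$ and that $3^3\top|K|$ and $3^3\top|H|$. Consequently every Sylow $3$-subgroup $P$ of $K$ is simultaneously Sylow in $H$ and in $G$; let $Z_3=P\cap Z(H)\cong C_3$, so $Z_3\leqslant Z(K)$, and $P/Z_3$ is a Sylow $3$-subgroup of $K/Z(H)\cong A_6$, giving $P/Z_3\cong C_3\times C_3$. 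The hypothesis $C_9\hookrightarrow G$ forces $P$ (of order $27$) to have an element of order $9$, which, combined with the central quotient structure, pins $P$ down to either $C_9\times C_3$ or $H_3$.

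Next, I would verify that in both of the remaining cases the set $\Omega_1(P)=\{x\in P : x^3=1\}$ is a characteristic subgroup of $P$ isomorphic to $C_3\times C_3$ and containing $Z_3$. In the non-abelian case $P\cong H_3=\langle\ a,\ b\mid a^9=b^3=1,\ b^{-1}ab=a^4\ \rangle$ this uses the short identity $(a^ib^j)^3=a^{3i}$ obtained from the defining relation. Consequently $\Omega_1(P)/Z_3$ is a proper, non-trivial, $1$-dimensional $GF(3)$-subspace of the $2$-dimensional space $P/Z_3$.

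Finally, I would examine $N_K(P)/Z(H)$. This is isomorphic to $N_{A_6}(C_3\times C_3)$, which has order $36$ and is the semi-direct product of a normal $C_3\times C_3$ by $C_4$, with the $C_4$ acting faithfully on $C_3\times C_3$. Since $P'\leqslant Z_3$ (in fact $P'=Z_3$ in the non-abelian case) and $Z(H)$ centralises $P$, the conjugation action of $N_K(P)$ on $P/Z_3$ factors through the faithful embedding $C_4\hookrightarrow GL(2,3)$. Every order-$4$ element of $GL(2,3)$ has minimal polynomial $x^2+1$, which is irreducible over $GF(3)$, so this $C_4$-action on $P/Z_3$ is irreducible. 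But $\Omega_1(P)/Z_3$ is $N_K(P)$-invariant (as $\Omega_1(P)$ is characteristic in $P$) and is a proper non-zero subspace of $P/Z_3$, contradicting irreducibility. Hence $C_9$ cannot be embedded in $G$. The main obstacle is verifying that $\Omega_1(P)$ is a characteristic subgroup of order exactly $9$ in the non-abelian case $P\cong H_3$; the rest reduces to routine Sylow arithmetic, the standard description of $N_{A_6}(C_3\times C_3)$, and a one-line minimal-polynomial calculation in $GL(2,3)$.
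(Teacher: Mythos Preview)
Your argument is correct, but it is genuinely different from the paper's. The paper does not classify the Sylow $3$-subgroup $P$ at all; instead it works directly with a copy $U\cong C_9$ inside $K$. After observing that $Z(H)\leqslant U$ (so $\Phi(U)=Z(H)$), the paper notes that $U/Z(H)$ is a $C_3$ in $A_6$ and, by a short case split on the two cycle types of $3$-elements in $A_6$, exhibits an explicit involution of $A_6$ inverting $U/Z(H)$. Lifting this to a $2$-element $x\in K$ (which can be taken of order $2$ once one reduces to $Z(H)\cong C_3$), one sees that conjugation by $x$ is an order-$2$ automorphism of $U\cong C_9$; but the unique such automorphism is inversion, so $x$ inverts $\Phi(U)=Z(H)$, contradicting centrality.

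Your route replaces this hands-on involution argument with a representation-theoretic one: you pin $P$ down to $C_9\times C_3$ or $H_3$, identify the characteristic subgroup $\Omega_1(P)\cong C_3\times C_3$, and then use that $N_K(P)/PZ(H)\cong N_{A_6}(C_3\times C_3)/(C_3\times C_3)\cong C_4$ acts on $P/Z_3$ through an order-$4$ element of $GL(2,3)$, hence irreducibly. The payoff is that you avoid the case analysis on conjugacy classes of $3$-elements in $A_6$, and the contradiction (an invariant $1$-dimensional subspace $\Omega_1(P)/Z_3$ versus irreducibility) is clean. The cost is the extra classification of $P$ and the verification of $\Omega_1(H_3)$ via the class-$2$ identity $(a^ib^j)^3=a^{3i}$. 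The paper's proof is shorter and more elementary; yours is more structural and arguably more conceptual.
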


\begin{proof}

We let $ K \leqslant H $ be such that $ Z(H) \leqslant K$ and $ K/Z(H) \cong A_6 $. Since $ 9 \mid |A_6| $ we see that $27$ is a divisor of $ |K| $. Thus the Sylow 3-subgroups of $G$ are isomorphic to those of $K$. We suppose that $C_9$ can be embedded in $G$ and let $ U \leqslant K $ be such that $ U \cong C_9 $. 

We identify $ K/Z(H) $ with $A_6$ and assume, without loss of generality, that $ Z(H) \cong C_3 $. As the Sylow 3-subgroups of $A_6$ are isomorphic to $ C_3 \times C_3 $ we see that $ Z(H) \leqslant U $. Consideration of the possible cycle types of order three in $A_6$ shows that we may identify $ U/Z(H) $ with either $ \langle \ (123) \ \rangle $ or $ \langle \ (123)(456) \ \rangle $. Now $ \langle \ (123), \ (12)(45) \ \rangle $ and $ \langle \ (123)(456), \ (12)(45) \ \rangle $ are subgroups of $A_6$, both of which are isomorphic to $S_3$. Hence there exists an element $ x \in K $ such that $ \langle \ x, \ U \ \rangle / Z(H) \cong S_3  $. Without loss of generality, we may assume that $ o(x) = 2 $. It follows that conjugation by $x$ induces an automorphism of order $2$ on $ U/Z(H) = U / \Phi(U) $. Then conjugation by $x$ induces an automorphism of order $2$ on $U$, so we may assume that conjugation by $x$ inverts every element of $U$. It follows that $x$ does not centralize $ \Phi(U) = Z(H) $, which is a contradiction. We conclude that $C_9$ cannot be embedded in $G$.

\end{proof}

Our next lemma provides some information that will be useful in the proof of Proposition \ref{12embed}, the main result of this section.

\begin{lemma}\label{Inny}

There exists an element $ y \in \bmaut(GL(3, 2)) $ with $ o(y) = 2 $, and such that $ \bmaut(GL(3, 2)) = \bminn(GL(3, 2))\langle y \rangle $.

\end{lemma}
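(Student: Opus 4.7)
The plan is to exhibit an explicit involution in $\bmaut(GL(3,2))$ lying outside $\bminn(GL(3,2))$. From Table \ref{simplegroups}, $GL(3,2)$ is a non-abelian simple group of order 168 with outer automorphism group of order 2. In particular $\bminn(GL(3,2)) \cong GL(3,2)$ has order 168 and $|\bmaut(GL(3,2))| = 336$, so it suffices to produce a single element $y$ of order 2 in $\bmaut(GL(3,2))$ that is not inner, since then $\bminn(GL(3,2))\langle y \rangle$ is a subgroup of index 1 in $\bmaut(GL(3,2))$.

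First I would define the inverse-transpose map $\phi : GL(3,2) \to GL(3,2)$ by $\phi(g) = (g^T)^{-1}$. A direct calculation gives
\[ \phi(gh) = ((gh)^T)^{-1} = (h^Tg^T)^{-1} = (g^T)^{-1}(h^T)^{-1} = \phi(g)\phi(h), \]
so $\phi$ is a homomorphism, and $\phi^2(g) = ((g^T)^{-1})^T)^{-1} = g$, so $\phi$ is an automorphism of order at most $2$.

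The main step is to verify that $\phi$ is not inner (so that $\phi$ has order exactly $2$ and represents the non-trivial coset of $\bminn(GL(3,2))$ in $\bmaut(GL(3,2))$). Since any inner automorphism preserves conjugacy classes, it suffices to exhibit an element $g$ whose conjugacy class is not invariant under $\phi$. For this I would use an element of order $7$: viewing $GL(3,2)$ as acting on $GF(8)$ regarded as a $3$-dimensional $GF(2)$-space, multiplication by a generator $\alpha$ of $GF(8)^{\ast}$ gives such an element, with eigenvalues $\{\alpha,\alpha^{2},\alpha^{4}\}$ in $GF(8)$. The inverse $g^{-1}$ has eigenvalues $\{\alpha^{6},\alpha^{5},\alpha^{3}\}$, which is a disjoint set, so $g$ and $g^{-1}$ lie in distinct conjugacy classes of $GL(3,2)$. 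Since $g^T$ has the same characteristic polynomial as $g$ and is therefore conjugate to $g$, the element $\phi(g) = (g^T)^{-1}$ is conjugate to $g^{-1}$ but not to $g$. Hence $\phi$ does not preserve the conjugacy class of $g$, and so is outer.

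Setting $y = \phi$ then yields $y \in \bmaut(GL(3,2))$ with $o(y) = 2$ and $y \notin \bminn(GL(3,2))$, whence $\bmaut(GL(3,2)) = \bminn(GL(3,2))\langle y \rangle$ by comparison of orders. The only non-routine step is the verification that $\phi$ is outer; the eigenvalue argument above handles this cleanly, though an equally valid alternative would be simply to quote the well-known structure of $\bmaut(GL(n,q))$ for $n \geq 3$.
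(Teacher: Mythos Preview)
Your proof is correct and takes a genuinely different route from the paper's. You construct the outer involution explicitly as the inverse-transpose (graph) automorphism $\phi(g) = (g^T)^{-1}$, and verify outerness by a clean eigenvalue computation showing that $\phi$ swaps the two conjugacy classes of $7$-elements. The paper, by contrast, works entirely inside the abstract group: it first determines that $N_W(P) \cong S_3$ for a Sylow $3$-subgroup $P$ of $W = GL(3,2)$, then uses the Frattini argument to get $\bmaut(W) = W\,N_{\bmaut(W)}(P)$ and computes $|N_{\bmaut(W)}(P)| = 12$; the desired $y$ is then extracted as the generator of $C_S(P)$ for $S$ a Sylow $2$-subgroup of $N_{\bmaut(W)}(P)$. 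Your approach is shorter and more conceptual, leaning on standard linear-algebraic facts (a matrix is conjugate to its transpose; conjugacy of semisimple elements is detected by eigenvalue sets). The paper's approach is longer but stays within the finite-group-theoretic idiom of the surrounding section and incidentally yields a $y$ centralising a Sylow $3$-subgroup --- information that is not, however, used in the subsequent Corollary, so nothing is lost by your method.
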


\begin{proof}

We let $ W = GL(3, 2) $ and identify $W$ with $ \bminn(GL(3, 2)) $. From Table \ref{simplegroups} we see that $ |\bmaut(W) : W| = 2 $. We let $R$ be a Sylow 7-subgroup of W. Since $ GL(3, 2) $ is simple we see that $R$ is not normal in $W$. Hence, by Sylow's Theorems, we have $ |W : N_W(R)| = 8 $, whence $ |N_W(R)| = 21 $. In particular there is a non-trivial homomorphism from $W$ into $S_8$. But $W$ is simple, so we see that $W$ can be embedded in $S_8$.

We let $P$ be a Sylow 3-subgroup of $N_W(R)$. Then $P$ is also a Sylow 3-subgroup of $W$, and we have $ P \cong C_3 $ and $ N_W(R) = PR $. Now $N_W(R)$ cannot be abelian, as otherwise  $ N_W(R) \cong C_{21 } $ and $W$ would possess an element of order $21$, in contradiction to the fact that $W$ can be embedded in $S_8$ (which has no elements of order $21$). Thus $P$ normalises, but does not centralise, $R$. It follows that $P$ has precisely $7$ conjugates in $ N_W(R) $.

Then, by Sylow's Theorems, $P$ has either $7$ or $28$ conjugates in $W$. If $P$ has $7$ conjugates in $W$ then $ O^{3^{'}}(W) = \langle \ P^w \mid w \in W \ \rangle = \langle \  P^r \mid r \in R \ \rangle = N_W(R) $. But then $ N_W(R) = O^{3^{'}}(W) \unlhd W $, which contradicts the simplicity of $W$. Thus $P$ has $28$ conjugates in $W$, so $ |W : N_W(P)| = 28 $ and $ |N_W(P)| = 6 $.

Now 
$ S_3 \cong \langle \  \left(  \begin{array}{rrr}
0 &1 & 0 \\
-1 & -1 & 0 \\
0 & 0 & 1 \end{array} \right) , \  
\left(  \begin{array}{rrr}
0 &1 & 0 \\
1 & 0 & 0 \\
0 & 0 & 1 \end{array} \right) \ \rangle 
\leqslant GL(3, 2) $.
Thus the normalisor of the Sylow 3-subgroup 
$ \langle \ \left( \begin{array}{rrr}
0 & 1 & 0 \\
-1 & -1 & 0 \\
0 & 0 & 1 \end{array} \right)  \ \rangle $ in $ GL(3, 2)$ is isomorphic to $S_3$, and it follows that $ N_W(P) \cong S_3$. We let $ \langle x \rangle $ be a Sylow 2-subgroup of $N_W(P)$ and have $ \langle x \rangle \cong C_2 $ and $ N_W(P) = \langle x \rangle P $. 

Since $W$ is identified with $ \bminn(W) $, the Frattini argument shows that $ \bmaut(W) = WN_{\bmaut(W)}(P)$. Hence
\begin{eqnarray*}
|N_{\bmaut(W)}(P) : N_W(P)| & = & |N_{\bmaut(W)}(P) : W \cap N_{\bmaut(W)}(P)|\\
& = &|WN_{\bmaut(W)}(P) : W|\\
& = & |\bmaut(W) : W| = 2 .
\end{eqnarray*}
Thus $N_W(P)$ has index $2$ in $N_{\bmaut(W)}(P)$, so $ |N_{\bmaut(W)}(P) | = 12 $. We let $S$ be a Sylow 2-subgroup of $N_{\bmaut(W)}(P) $ such that $ \langle x \rangle \leqslant S $. Then $ |S| = 4 $. Since $ \langle x \rangle P \cong S_3 $ we see that $x$ does not centralise $P$, so $ |C_S(P | = 2 $. Thus $ S = \langle x \rangle \times C_S(P) $. Now, $ C_S(P) \cap  W \leqslant S \cap C_W(P) = S \cap P = 1$. Letting $ C_S(P) = \langle y \rangle $, we thus have $ o(y) = 2 $ and $  y \notin W $. Hence we conclude that $ \bmaut(W) = W\langle y \rangle $, with $ y^2 = 1$, as desired.

\end{proof}

\begin{corollary}\label{SInny}

Let $S$ be a Sylow 2-subgroup of $ \bmaut(GL(3, 2))$. Then there exists an element $ y \in \bmaut(GL(3, 2)) $ such that $ o(y) = 2 $ and 
\[ S = (S \cap \bminn(GL(3 ,2)))\langle y \rangle , \]
where $ S \cap \bminn(GL(3, 2)) \cong D_4 $ and $ (S \cap \bminn(GL(3 ,2))) \cap \langle y \rangle = 1 $.

\end{corollary}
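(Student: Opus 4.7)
The plan is to derive this as an essentially immediate consequence of Lemma \ref{Inny} combined with Sylow's Theorems. Write $W = GL(3,2)$, identified with $\bminn(W)$. From Table \ref{simplegroups} we have $|W| = 168 = 2^3 \cdot 3 \cdot 7$ and $|\bmaut(W):W| = 2$, so $|\bmaut(W)| = 336$ and hence $|S| = 16$. Since $W \unlhd \bmaut(W)$ with index $2$, the subgroup $S \cap W$ is a $2$-subgroup of $W$ whose order is at least $|S|/2 = 8$ and at most $8$ (the latter being the order of a Sylow $2$-subgroup of $W$); thus $S \cap W$ is a Sylow $2$-subgroup of $W$. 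The Sylow $2$-subgroups of $GL(3,2) \cong PSL(2,7)$ are dihedral of order $8$ (a fact that can be exhibited directly via explicit $3 \times 3$ matrices, as in the proof of Lemma \ref{Inny}), so $S \cap W \cong D_4$.

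Next, Lemma \ref{Inny} supplies an element $y_0 \in \bmaut(W)$ with $o(y_0) = 2$ and $\bmaut(W) = W\langle y_0 \rangle$; in particular $y_0 \notin W$, for otherwise $\bmaut(W) = W$, contradicting $|\bmaut(W):W| = 2$. Choose a Sylow $2$-subgroup $S_0$ of $\bmaut(W)$ containing $y_0$. By Sylow's Theorems, $S$ is conjugate to $S_0$, say $S = S_0^g$ for some $g \in \bmaut(W)$. Set $y = y_0^g$. Then $o(y) = 2$ and $y \in S$. Since $W$ is normal in $\bmaut(W)$, the fact that $y_0 \notin W$ forces $y \notin W$, so in particular $y \notin S \cap W$.

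To finish, observe that $(S \cap W)\langle y \rangle$ is a subgroup of $S$ whose order strictly exceeds $|S \cap W| = 8$ and divides $|S| = 16$, so it must equal $S$. Moreover $\langle y \rangle$ has exactly one non-identity element, namely $y$, which lies outside $W$; hence $\langle y \rangle \cap W = 1$ and therefore $(S \cap W) \cap \langle y \rangle = 1$, as required. There is no real obstacle here: the substantive work has already been carried out in Lemma \ref{Inny}, and the corollary merely transfers the conclusion to an arbitrary Sylow $2$-subgroup of $\bmaut(W)$ by Sylow conjugacy.
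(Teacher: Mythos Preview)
Your proof is correct and follows essentially the same approach as the paper: both invoke Lemma \ref{Inny} to obtain an involution outside $\bminn(GL(3,2))$, identify $S \cap \bminn(GL(3,2))$ as a Sylow $2$-subgroup of $GL(3,2)$ isomorphic to $D_4$, and conclude by order comparison. The only difference is that the paper simply chooses $S$ to contain the element $y$ from Lemma \ref{Inny}, whereas you explicitly conjugate to handle an arbitrary Sylow $2$-subgroup; your version is slightly more careful but not materially different.
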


\begin{proof}

Let $ \langle y \rangle $ be as in Lemma \ref{Inny} above and let $S$ be a Sylow 2-subgroup of $\bmaut(GL(3,2 ))$ with $ y \in S $. Then $ S \cap \bminn(GL(3, 2)) $ is a Sylow 2-subgroup of $ \bminn( GL(3, 2))$. Now $ H = \{ \ \left( 
\begin{array}{rrr} 
1 & 0 & 0\\
a & 1 & 0\\
b & c & 1 \end{array} \right) \ \mid \ a, \ b, \ c \in GF(2) \  \} 
$ is a Sylow 2-subgroup of $ GL (3, 2) $ and $ H \cong D_4 $. Thus $ S \cap \bminn(GL(3, 2)) \cong D_4 $ . Since $ o(y) = 2 $  and $ y\notin \bminn (GL(3, 2)) $, it follows, by comparison of orders, that $ S = (S \cap \bminn(GL(3, 2)))\langle y \rangle $ and $ (S \cap \bminn(GL(3 ,2))) \cap \langle y \rangle = 1 $.

\end{proof}

\begin{proposition}\label{12embed}

Let $G$ be a finite group whose order is a divisor of $ 2^5.3^3.5.7.11.13 $. Suppose that $ 1 = G_0 \unlhd G_1 \unlhd \dots \unlhd G_n = G $ is a chief series of $G$ for which exactly one chief factor, $ G_k/G_{k-1} $, is not soluble. Then not all groups of order $12$ or less can be embedded in $G$.

\end{proposition}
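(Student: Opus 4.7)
My strategy is exhaustive case analysis on the non-soluble chief factor of $G$. By Burnside's Theorem $G$ being non-soluble forces some prime $p \in \{5, 7, 11, 13\}$ to divide $|G|$; since each such $p$ divides $|G|$ only to the first power, the unique non-soluble chief factor $G_k/G_{k-1}$ cannot be a direct product of two or more isomorphic simple groups and so is a single copy of a simple group $W$ from Table \ref{simplegroups}. Lemma \ref{NIW} then produces a normal subgroup $N \unlhd G$ with $N = N'$ and $N/Z(N) \cong W$, where $|Z(N)|$ divides the Schur multiplier of $W$. The hypothesis that every group of order at most $12$ embeds in $G$, combined with Lemma \ref{pbound} applied to $C_8$, $C_2 \times C_2 \times C_2$, $C_9$ and $C_3 \times C_3$, pins down $|G|_2 = 32$ and $|G|_3 = 27$, and gives a subgroup $U \cong A_4$ of $G$. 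The plan is then to show that for each of the eleven candidates $W$, at least one of $C_8$, $Q_2$, $C_2 \times C_2 \times C_2$, $C_9$ or $A_4$ actually fails to embed in $G$.

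For $W \in \{A_5,\, PSL(2, 11),\, PSL(2, 13)\}$ the six hypotheses of Lemma \ref{GA4N} are immediate to verify ($M \cong C_2$, $16 \nmid |\bmaut(W)|$, $\bmaut(W)/\bminn(W) \cong C_2$), and the lemma concludes directly that not all groups of order $8$ embed. For $W \in \{A_6,\, A_7\}$ I would invoke Lemma \ref{C9A6}: if $3 \mid |Z(N)|$ take $H = N$ directly; otherwise, since $\bmaut(W)/\bminn(W)$ has trivial $3$-part but $|G|_3 = 27$ exceeds $|N|_3 = 9$, the missing factor of $3$ must sit inside $C_G(N)$, so picking a $3$-element $x \in C_G(N)$ and setting $H = \langle x \rangle N$ produces a subgroup satisfying $3 \mid |Z(H)|$ and $H/Z(H) \supseteq A_6$, whereupon $C_9$ is forbidden. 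The case $W = M_{11}$ is handled analogously, using $A_6 \leqslant M_{10} \leqslant M_{11}$. For $W \in \{PSL(2, 27),\, PSL(3, 3),\, U_3(3)\}$ the Sylow $3$-subgroup of $N$ (and hence of $G$, as $|G|_3 = |N|_3 = 27$) has exponent $3$, so $C_9$ cannot embed.

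The remaining cases $W \in \{GL(3, 2),\, PSL(2, 8)\}$ require more delicate bookkeeping of the Sylow $2$- and $3$-structures of $G$, $N$ and $C_G(N)$. For $W = GL(3, 2)$ with $|Z(N)| = 2$, the Sylow $2$-subgroup of $N$ is the generalised quaternion group $Q_{16}$, whose unique involution together with $|G/N|_2 \leq 2$ forbids $C_2 \times C_2 \times C_2$ from embedding. For $|Z(N)| = 1$ I would apply Corollary \ref{SInny} to choose $y \in G \setminus NC_G(N)$ whose image in $\bmaut(N)$ is the specific involution identified there; a Sylow $2$-subgroup of $G$ then takes the exact form $(D \times \langle x \rangle)\langle y \rangle$ needed by Corollary \ref{GDxy}, forcing exponent $4$ and ruling out $C_8$. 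For $W = PSL(2, 8)$, $A_4$ is not a subgroup of $N$ (whose maximal subgroups have orders $14$, $18$ and $56$), so the copy $U \cong A_4$ must project non-trivially into $G/N$; combining the constraints $G/NC_G(N) \leqslant \bmaut(W)/\bminn(W) \cong C_3$, the cyclicity of the Sylow $3$-subgroup $C_9$ of $N$ (which forces $C_3 \times C_3 \cap N$ to have order at most $3$ and $C_3 \times C_3$ to project non-trivially as well), together with the constraints from the embeddings of $Q_3$ and $C_{12}$ on the $2$-part of $G/N$, yields a contradiction. The principal obstacle is the $W = GL(3, 2)$ case, because $16 \mid |\bmaut(GL(3, 2))|$ blocks direct use of Lemma \ref{GA4N} and only the explicit involution identified in Corollary \ref{SInny} allows Corollary \ref{GDxy} to be brought to bear.
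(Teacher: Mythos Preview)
Your overall architecture matches the paper's: reduce to a single simple chief factor $W$, invoke Lemma~\ref{NIW} to produce $N \unlhd G$ with $N/Z(N) \cong W$, then eliminate each of the eleven candidates in Table~\ref{simplegroups}. Several of your case treatments are correct and in fact cleaner than the paper's: the $Q_{16}$ argument for $W \cong GL(3,2)$ with $|Z(N)| = 2$ (unique involution forces $|C_2^3 \cap N| \le 2$, contradicting $|G/N|_2 = 2$) is considerably shorter than the paper's lengthy analysis of that subcase; your uniform use of Lemma~\ref{C9A6} for $A_6$, $A_7$, $M_{11}$ by adjoining a $3$-element of $C_G(N)$ when $3 \nmid |Z(N)|$ is valid and avoids the paper's separate elementary-abelian-Sylow argument; and your exponent-$3$ treatment of $U_3(3)$ is correct (there are no elements of order $9$ in $U_3(3)$) and simpler than the paper's $D_5$ argument.

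However, two cases have genuine gaps. For $W \cong GL(3,2)$ with $Z(N) = 1$ you presuppose an element $y \in G \setminus NC_G(N)$, but $|G : NC_G(N)|$ divides $|\bmaut(W)/\bminn(W)| = 2$ and may equal $1$; equivalently you have not separated the subcases $2 \top |C_G(N)|$ and $2^2 \top |C_G(N)|$. In the latter the Sylow $2$-subgroup of $G$ is already $D_4 \times S_2$ with $|S_2| = 4$, and the paper disposes of it via Lemma~\ref{Habex4}, not Corollary~\ref{GDxy}. For $W \cong PSL(2,8)$ your sketch (project $A_4$ and $C_3 \times C_3$ into $G/N$, then invoke unspecified ``constraints from $Q_3$ and $C_{12}$'') is not a proof; the paper's route here is direct and purely $2$-local: since $M = 1$ and $\bmaut(W)/\bminn(W) \cong C_3$, a Sylow $2$-subgroup of $G$ is $S_1 \times S_2$ with $|S_1| = 8$, $|S_2| = 4$, and any order-$4$ subgroup $K \le S_1$ gives an abelian subgroup $K \times S_2$ of order $16$ and exponent at most $4$, so Lemma~\ref{Habex4} finishes. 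You should also record, as the paper does, the verification that Lemma~\ref{NIW}'s hypothesis (ii) holds: the abelian chief factors below $G_k$ have ranks bounded by $3$ (for $p=2$) and $2$ (for $p=3$), and no $W$ from Table~\ref{simplegroups} is a section of $GL(3,2)$ or $GL(2,3)$ by order considerations.
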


\begin{proof}

As in the proof of Lemma \ref{2CF} we see that $ G_k/G_{k-1} \cong W $, where $W$ is a non-abelian finite simple group. By Lemma \ref{2CF} we may also assume that the remaining chief factors are elementary abelian. Consideration of Table \ref{simplegroups} shows that $ 2^2 \mid |W| $ and that, where $ 2^2 \top |W| $, then either  5, 11  or 13  is also a divisor of $|W|$. Now, where $ 2^2 \top |W| $, the maximum possible rank of a 2-chief factor is $ r = 3 $. Thus if $ G_i/G_{i-1} $ is a 2-chief factor of rank $3$ then $|W|$ is not a divisor of $ |\bmaut(G_i/G_{i-1})|$  ($ = |GL(3, 2)| $). In addition $3$ is also a divisor of $|W|$, so the maximum possible rank of a 3-chief factor is $ r = 2 $, and we note that $ \bmaut(C_3 \times C_3) \cong GL(2, 3) $ is soluble.

Thus the conditions for Lemma \ref{NIW} are satisfied, so there exists a normal subgroup $ N \unlhd G $ such that $ N = N^{'} $ and $ N/Z(N) \cong W $. We let $ C/ Z(N) = C_{G/Z(N)} (N/Z(N)) $. By Corollary \ref{CGN}, $ C = C_G(N) $, so $NC$ is the central product of $N$ and $C$. In particular $ C \cap N = Z(N) $. Letting $S_1$, $P_1$, $S_2$ and $P_2$ be Sylow 2- and 3-subgroups of $N$ and $C$ respectively, we see that the central products $S_1S_2$ and $P_1P_2$ are Sylow 2- and 3-subgroups respectively of $NC$.

We proceed to deal with the possible isomorphism types for $W$ in ascending order of $|W|$. We let $M$ denote the Schur multiplier of $W$ and will make extensive use of the fact (as noted in \cite{Atlas}, p.xvii) that $Z(N)$ will always be isomorphic to a factor group of $M$.

\begin{case}

$ |W| = 60 = 2^2.3.5 ; \ W \cong A_5  ; \  M \cong C_2 ; \  \bmaut(W)/\bminn(W) \cong C_2 $.

\end{case}

In this case Z(N) is isomorphic to a factor group of $ M \cong C_2 $. Thus either $ 2 \nd |Z(N)| $ or $ 2 \top |Z(N)| $. In addition we see that $ 16 \nmid |\bmaut(N/Z(N))| $ and $ 3 \nmid |\bmaut(N/Z(N))/\bminn(N/Z(N))| $. Hence we may apply Lemma \ref{GA4N} to see that if $A_4$ can be embedded in $G$, then not all groups of order $8$ can be embedded in $G$.

\begin{case}\label{GL(3,2)}

$ |W| = 168 = 2^3.3.7 ; \ W  \cong PSL(3, 2) \cong GL(3, 2) ; \ M \cong C_2 ; \ \bmaut(W)/\bminn(W) \cong C_2 $.

\end{case}

Since $ M \cong C_2 $ we have either $Z(N) = 1 $ or $ Z(N) \cong C_2 $. In addition we see that $ Z(N) = S_1 \cap S_2 $. Since $Z(N)$ is a 2-group, we have $ |P_1| = 3 $. Now $G/C$ is isomorphic to a subgroup of $\bmaut(W)$ and $ 3 \top |\bmaut(W)|$, so $ 3^2 \top |C| $ and $ |P_2| = 9$. We further have $ P_1 \cap P_2 \leqslant N \cap C = Z(N) $. But $ |Z(N)| \leq 2 $, so $ P_1 \cap P_2 = 1 $ and $ |P_1P_2| = |P_1| |P_2| = 27 $. Hence $ P_1P_2 = P_1 \times P_2 $ is a Sylow 3-subgroup of $G$. In particular $NC$ is a normal subgroup of $G$ that contains a Sylow 3-subgroup of $G$ and it follows that $ O^{3^{'}}(G) \leqslant NC $.

We suppose that $A_4$ can be embedded in $G$ and let $ U \leqslant G $ be such that $ U \cong A_4 $. Then $ U \leqslant O^{3^{'}}( G ) \leqslant NC $. If $ U \cap N = 1 $ then
\begin{eqnarray*}
UN/N & \leqslant & CN/N\\
& \cong & C/(C \cap N)\\
& = & C/Z(N)
\end{eqnarray*}
Thus $C/Z(N) $ has a subgroup isomorphic to $U$ and, in particular, has a subgroup isomorphic to $ U^{'  } \cong C_2 \times C_2 $. 

Now $ S_1/Z(N) $ ($ = S_1 / (S_1 \cap S_2 ) $) is isomorphic to a Sylow 2-subgroup of $ W \cong GL(3, 2) $, so $S_1/Z(N) \cong D_4 $. In addition $U^{'} \cong C_2 \times C_2$ can be embedded in $S_2/Z(N)$, so by comparison of orders, $  S_1 \cap S_2 =$ $Z(N) = 1 $  and the Sylow 2-subgroups of $G$ are isomorphic to $ S_1 \times S_2 \cong D_4 \times C_2 \times C_2 $. But then the Sylow 2-subgroups of $G$ have exponent 4, so $C_8$ cannot be embedded in $G$.

We may now assume that $ U \cap N \neq 1 $ so, by minimal normality, $ U^{'} \leqslant N $. Now there exists an element $ x \in U $ such that $ o(x) = 3 $ and $ [x, U^{'}] = U^{'}$. Without loss of generality we assume that $ x \in P_1 \times  P_2 $ and let $ x = x_1x_2 $, with $ x_1 \in P_1 $ and $ x_2 \in P_2 $. Since $P_1$ and $P_2$ commute elementwise we have $ o(x_i) = 1 $ or $ o(x_i) = 3$ $ (i = 1, 2) $. If $ o(x_1) = 1 $ then $ x = x_2 $ so $ U^{'} = [x_2, U^{'}] \leqslant C $. Then $ U^{'} \leqslant C \cap N = Z(N) $, which is a contradiction since $|Z(N)| \leqslant 2$.  Hence $ o(x_1) = 3 $.

Now $ x_2 \in C $ so $x_2$ centralises $N$ and, in particular, $x_2$ centralises $U^{'}$. But $x_2$ also centralises $ x = x_1x_2 $. Thus $x_2$ centralises $ U = \langle \ x_1x_2, \ U^{'} \rangle $. In addition we have $ [ x_1, U^{'} ] = [ x_1x_2, U^{'} ] = U^{'} $, so $x_1$ normalises $U^{'}$. Furthermore, since $x_2$ is a 3-element and $ \langle x_2 \rangle \cap U^{'}\langle x_1 \rangle \leqslant C \cap N = Z(N)$ (which is a 2-group), we have $ \langle x_2 \rangle \cap U^{'}\langle x_1 \rangle = 1 $. Hence
\begin{eqnarray*}
U \langle x_2 \rangle & = & U^{'} \langle \ x_1x_2, \ x_2 \ \rangle\\
& = & U^{'} \langle x_1 \rangle \langle x_2 \rangle\\
& = & U^{'} \langle x_1 \rangle \times \langle x_2 \rangle .
\end{eqnarray*} 
Thus $ U^{'} \langle x_1 \rangle \cong U \langle x_2 \rangle/ \langle x_2 \rangle  \cong U \cong A_4 $, so $A_4$ can be embedded in $N$. Therefore, without loss of generality, we assume that $ U \leqslant N $.

We recall that, since $ M \cong C_2 $, we have either $Z(N) = 1 $ or $ Z(N) \cong C_2 $. We suppose first that $ Z(N) = 1 $. $S_1$ is then isomorphic to a Sylow 2-subgroup of $ GL( 3, 2 ) $, so $ S_1 \cong D_4 $. In particular $S_1$ has exponent 4. In addition, since $ | \bmaut(W) : \bminn(W) | = 2 $, we have $ 2^4 \top |\bmaut(W)|$.  Since $ 2^3 \top |W|$  ($= | N/Z(N) | $) it follows that either $ 2^2 \top |C| $ or $ 2 \top |C| $. If $ 2^2 \top |C| $ then $|S_2| = 4$, so $S_2$ has exponent at most 4. Since $ Z(N) = 1 $, $ S_1S_2 = S_1 \times S_2 $ is a Sylow 2-subgroup of $G$. Hence the Sylow 2-subgroups of $G$ have exponent $4$, so $C_8$ cannot be embedded in $G$.

If $ 2 \top |C| $ then $ S_2 \cong C_2 $ and $ S_1S_2 = S_1 \times S_2 \cong D_4 \times C_2$. Letting $S$ be a Sylow 2-subgroup of $G$ that contains $S_1S_2$, we see that $ S_2 = S \cap C = C_S(N) \unlhd S $. Since $ 2^4 \top | \bmaut( GL( 3, 2 ) ) | $  we see that $S/S_2$ is isomorphic to a Sylow 2-subgroup of $\bmaut( GL( 3, 2 ) )$. Identifying $S/S_2$ with a Sylow 2-subgroup of $ \bmaut( GL( 3, 2 ) ) $ and $ S_1S_2/S_2 $ with a Sylow 2-subgroup of $ \bminn( GL( 3, 2 ) ) $, we apply Corollary \ref{SInny} to see that there exists an element $ y \in S $ such that $ S/S_2 = (S_1S_2/S_2)\langle y \rangle S_2/S_2 $, where $ \langle y \rangle S_2/S_2 \cong C_2 $ and, in particular, $ y^2 \in S_2 $. We may thus apply Corollary \ref{GDxy} to see that not all groups of order 8 can be embedded in $G$.

We may now assume that $ Z(N) \cong C_2 $. We have $ Z(N) \leqslant C $ and, as above, either $ 2^2 \top |C| $ or $ 2 \top |C| $. If $ 2^2 \top |C| $ then $ |S_2| = 4 $, so $S_2$ is abelian. Since $U$, as above, is a subgroup of $N$ we have $ U^{'} \cap C \leqslant Z(U) = 1 $ and $ [ U^{'}, C ] = 1 $. Hence, in particular, we have $ U^{'} \cap S_2 = 1 $ and $ [ U^{'}, S_2 ] = 1 $. Therefore
\[  U^{'}S_2 = U^{'} \times S_2. \]
But $ U^{'} \cong C_2 \times C_2 $ and $ |S_2| = 4 $, so $ U^{'}S_2 $ is abelian of order 16 and has exponent at most $4$. We then see from Lemma \ref{Habex4} that not all groups of order $8$ can be embedded in $G$. 

We now assume that $ Z(N) \cong C_2 $ and that $ 2 \top |C| $. In this case $ Z(N) = S_1 \cap S_2 = S_2 $ is a Sylow 2-subgroup of $C$. Since $ S_2 = Z(N) $, we see in particular that $ S_2 \unlhd G$. As $ U \leqslant N $ we may assume, without loss of generality, that $ U^{'} $ ($\cong C_2 \times C_2 $) is a subgroup of $S_1$. By the minimal normality of $U^{'}$ in $U$ we have $ U \cap S_2 = U^{'} \cap S_2 = 1 $, so $ US_2/S_2 \cong U/( U \cap S_2 ) \cong U \cong A_4 $.

We have $ |U^{'}S_2| = |U^{'}||S_2| = 8 $ and $ |S_1| = 16 $, so $ | S_1 : U^{'}S_2 | = 2 $. It follows that $ U^{'}S_2 \unlhd S_1 $. Thus $ \langle U, S_1 \rangle \leqslant N_N( U^{'}S_2 ) $. Since $ 3 \mid |U| $ we see that $ 3 \times 16 = 48 $ is a divisor of $ |N_N( U^{'}S_2 )| $. Then $ | N/S_2 : N_N( U^{'}S_2)/S_2 | $ is a divisor of $ \frac{168}{24} = 7 $ so, by the simplicity of $N/S_2$, we have $ | N_N( U^{'}S_2 ) | = 48 $. In addition we see, by comparison of orders, that $ N_N( U^{'}S_2 ) = US_1 $. 

Now $ |US_2| = |U||S_2| = 24 $, so $ | US_1 : US_2 | = 2 $ and $ US_2 \unlhd US_1 $. Since $ US_2 = U \times S_2 \cong A_4 \times C_2 $ we see that $ U = O^{3^{'}}( US_2 ) $ is a characteristic subgroup of $ US_2 $, and hence normal in $ US_1 $, and it follows that $ U^{'} $ is also normal in $ US_1 $. We let $ x_1 \in S_1 \char92 U^{'}S_2 $. Then $x_1$ centralises $S_2$ and normalises $U^{'}$. Since $ U^{'}S_2 = U^{'} \times S_2 \cong C_2 \times C_2 \times C_2 $ we see that $ U^{'}S_2 $ is abelian, so
\begin{eqnarray*}
 S_1^{'} & = & [ U^{'}S_2, \langle x_1 \rangle  ]\\
& = & [ U^{'}, \langle x_1 \rangle ] [ S_2, \langle x_1 \rangle ]\\
& = & [ U^{'}, \langle x_1 \rangle ] \leqslant U^{'}.
\end{eqnarray*}
But $ [ U^{'}, \langle x_1 \rangle ] $ is a proper subgroup of $U^{'}$ (since $S_1$ is nilpotent). In addition $ S_1/S_2 $ is isomorphic to a Sylow 2-subgroup of $ GL( 3, 2 ) $, so $S_1/S_2 \cong D_4 $ and $ S_1^{'}S_2/S_2 \cong C_2 $. Hence $ S_1^{'} = [ U^{'}, \langle x_1 \rangle ] \cong C_2 $. We let $ S_1^{'} = \langle x_2 \rangle $ and let $S$ be a Sylow 2-subgroup of $G$ that contains $S_1$. Since $ | S :  S_1 | = 2 $ we have $ S_1 \unlhd S $ and, as $ \langle x_2 \rangle $ is characteristic in $S_1$, we also have $ \langle x_2 \rangle \unlhd S $.

If $ o(g) \geq 4 $ for all $ g \in S_1 \char92 U^{'}S_2 $ then $ U^{'}S_2 = \langle \ x \in S_1 \mid o(x) = 2 \ \rangle $ is a characteristic subgroup of $ S_1 $, and so is normal in $S$. If $Q_2$ can be embedded in $G$ then we let $ Q \leqslant S $ be such that $ Q \cong Q_2 $. Since $ Q_2 $ has a unique element of order 2 we have $ | Q \cap U^{'}S_2 | = 2 $ and $ S = QU^{'}S_2 $. Then
\begin{eqnarray*}
S/U^{'}S_2 & \cong & QU^{'}S_2/ U^{'} S_2\\
& \cong & Q/( Q \cap U^{'}S_2 )\\
& \cong & Q_2/Z(Q_2)\\
& \cong & C_2 \times C_2
\end{eqnarray*}
Thus $ S/U^{'}S_2 $ has exponent 2. Since $ U^{'}S_2$ also has exponent 2 we see that $S$ has exponent at most 4, so $C_8$ cannot be embedded in $G$.

We may now assume that there exists an element $ g \in S_1 \char92 U^{'}S_2 $ such that $ o(g) = 2 $. If $ \langle g \rangle U^{'} \cap S_2 \neq 1 $ then $ S_2 \leqslant \langle g \rangle U^{'} $, whence $ S_1 = \langle g \rangle U^{'}S_2 = \langle g \rangle U^{'} $, and the contradiction $ |S_1| = 8 $ arises. Thus we may assume that $ \langle g \rangle U^{'} \cap S_2 = 1 $, so $ \langle g \rangle U^{'} \cong \langle g \rangle U^{'}S_2/S_2 \cong S_1/S_2 \cong D_4 $. Then, since $ S_2  \leqslant Z(N) $, we have
\[ S_1 = \langle g \rangle U^{'} \times S_2 \cong D_4 \times C_2. \]
Now $S/S_2$ is isomorphic to a Sylow 2-subgroup of $ \bmaut( GL( 3, 2 ) )$ so we may apply Corollary \ref{SInny} to see that there exists an element $ y \in S $ such that $ S/S_2 = ( S_1/S_2 )( \langle y \rangle S_2/S_2 ) $, where $ \langle y \rangle S_2/S_2  \cong C_2 $. In particular we see that $ y^2 \in S_2 $. We can now apply Corollary \ref{GDxy} to see that not all groups of order 8 can be embedded in G. 

This completes the proof for Case \ref{GL(3,2)}.

\begin{case}\label{A6}

$ |W| = 360 = 2^3.3^2.5 ; \ W \cong A_6 ;\  M \cong C_6 ; \ \bmaut(W)/\bminn(W) \cong C_2 \times C_2 $.

\end{case}

If $ 3 \nmid |Z(N)| $ then, since $ 3 \nmid |\bmaut(W)/\bminn(W)| $ and $ 3^2 \mid |W| $, we see that $ 3 \top |C| $. Thus $ P_2 \cong C_3 $. Furthermore, since the Sylow 3-subgroups of $A_6$ are isomorphic to $ C_3 \times C_3 $, we have $ P_1 \cong C_3 \times C_3 $. It follows that $ P_1P_2 = P_1 \times P_2 \cong C_3 \times C_3 \times C_3 $ is a Sylow 3-subgroup of $G$, so $C_9$ cannot be embedded in $G$. 

On the other hand, if $ 3 \mid |Z(N)| $ then we may apply Lemma \ref{C9A6} to see that $C_9$ cannot be embedded in $G$. 

\begin{case}\label{PSL2,8}

$ |W| = 504 = 2^3.3^2.7 ; \  W \cong PSL(2, 8) ; \ M = 1 ; \ \bmaut(W)/\bminn(W) \cong C_3 $.

\end{case}

Since $ M = 1 $ and $ 2^3 \top |W| $ we have $ Z(N) =1 $ and $ |S_1| = 8 $. Now $ 2 \nmid | \bmaut(W)/\bminn(W) | $ so $ |S_2| = 4 $. Then $ S_1S_2 = S_1 \times S_2 $ has order 32, so $S_1S_2$  is a Sylow 2-subgroup of $G$. We let $ K \leqslant S_1 $ be such that $ |K| = 4 $. Then $K$ is abelian of exponent at most $4$ and so $ K \times S_2 $ is a subgroup of $ S_1 \times S_2 $ that is abelian, of order $16$, and which has exponent at most $4$. Hence, by Lemma \ref{Habex4} we see that not all groups of order $8$ can be embedded in $G$.

\begin{case}\label{PSL211}

$ |W| = 660 = 2^2.3.5.11 ; \ W \cong PSL(2, 11) ; \ M \cong C_2 ; \ \bmaut(W)/\bminn(W) \cong C_2 $.

\end{case}

As $ M \cong C_2 $ and $ \bmaut(W)/\bminn(W) \cong C_2 $, we see that the conditions for Lemma \ref{GA4N} are satisfied. Hence if $A_4$ can be embedded in $G$, then not all groups of order $8$ can be embedded in $G$.

\begin{case}

$ |W| = 1092 = 2^2.3.7.13 ; \ W \cong PSL(2, 13) ; \  M \cong C_2 ; \ \bmaut(W)/\bminn(W) \cong C_2 $.

\end{case}

As in Case \ref{PSL211} the conditions for Lemma \ref{GA4N} are satisfied, so if $A_4$ can be embedded in $G$, then not all groups of order $8$ can be embedded in $G$.

\begin{case}

$ |W| = 2520 = 2^3.3^2.5.7 ; \ W \cong A_7 ; \ M \cong C_6 ; \ \bmaut(W)/\bminn(W) \cong C_2 $.

\end{case}

As in Case \ref{A6} we see that $C_9$ cannot be embedded in $G$.

\begin{case}

$ |W| = 5616 = 2^4.3^3.13 ; \ W \cong PSL(3, 3) ; \ M =1 ; \  \bmaut(W)/\bminn(W) \cong C_2$.

\end{case}

By comparison of orders the Sylow 3-subgroups of $G$ are isomorphic to those of $ N/Z(N) \cong PSL(3, 3)$. The Sylow 3-subgroups of $PSL(3, 3)$ are in turn isomorphic to those of $GL( 3, 3 )$, and a Sylow 3-subgroup of $GL( 3, 3 )$ is given by 
$ P = \{ \ \left( \begin{array}{lll}
1 & 0 & 0\\
a & 1 & 0\\
b & c & 1
\end{array} \right) \ \mid \ a, \ b, \ c \in GF(3) \ \} $.
But $P$ is non-abelian and of order 27 and exponent 3, so $C_9$ cannot be embedded in $G$.

\begin{case}

$ |W| = 6048 = 2^5.3^3.7 ; \ W \cong U_3(3) ; \ M = 1 ; \ \bmaut(W)/\bminn(W) \cong C_2 $.

\end{case}

In this case we have $ N \cong W \cong U_3(3) $, so $ 2^5 \top |N|$. It follows that $ O^{2^{'}}(G) \leqslant N $. Since $ 5 \nmid |U_3(3)| $ and $ 5 \nmid | \bmaut(U_3(3)) | $ we see that $C$ contains a Sylow 5-subgroup of $G$, whence $ O^{5^{'}}(G) \leqslant C $. Hence $ O^{2^{'}}(G) \cap O^{5^{'}}(G) \leqslant N \cap C = 1 $, and it follows that $ O^{2^{'}}(G)O^{5^{'}}(G) = O^{2^{'}}(G) \times O^{5^{'}}(G) $. Thus all 2-elements of $G$ commute with all 5-elements of $G$. In particular, all subgroups of order 10 in $G$ are abelian. Hence $D_5$ cannot be embedded in $G$.

\begin{case}

$ |W| = 7920 = 2^4.3^2.5.11 ; \ W \cong M_{11} ; \ M = 1 ; \ \bmaut(W)/\bminn(W) =1 $.

\end{case}

From \cite{Atlas} (p.18) we see that $A_6$ can be embedded in a maximal subgroup of $M_{11}$. As the Sylow 3-subgroups of $A_6$ are isomorphic to $ C_3 \times C_3 $ we see, by comparison of orders, that the Sylow 3-subgroups of $M_{11}$ are also isomorphic to $ C_3 \times C_3$. Since $ M = 1 $ we have $ N \cong M_{11} $. Thus $ P_1 \cong C_3 \times C_3 $. Since $ Z(N) = 1$ we have $ N \cap C = 1 $ and, since $ \bmaut(M_{11}) = \bminn(M_{11})$, we have $ 3 \top |C| $ so $ P_2 \cong C_3$. Hence $ P_1P_2 = P_1 \times P_2 \cong C_3 \times C_3 \times C_3 $ is a Sylow 3-subgroup of $G$ and we see that $C_9$ cannot be embedded in $G$.

\begin{case}

$ |W| = 9828 = 2^2.3^3.7.13 ; \ W \cong PSL(2, 27) ; \ M \cong C_2 ; \ \bmaut(W)/\bminn(W) \cong C_6  $.

\end{case}

$PSL(2, 27)$ is the quotient of a subgroup of $GL(2, 27)$. We see that $ 3^3 \top |GL(2,27)| $. Thus the Sylow 3-subgroups of $ PSL(2, 27) $ are isomorphic to those of $GL(2,27)$. Now
\[ P = \{ \ \left( \begin{array}{ll}
1 & 0\\
k & 1
\end{array} \right) \ \mid \ k \in GF(27) \ \}  
\cong GF(27)^{+}  
\]
is a Sylow 3-subgroup of $ GL(2,27) $. As $ GF(27)^{+} $ has characteristic 3, $P$ is elementary abelian. Thus the Sylow 3-subgroups of $G$ are elementary abelian, so $C_9$ cannot be embedded in $G$. 

This concludes the proof of the proposition.

\end{proof}

\section{Minimal embeddings of groups of order $n$ or less for $n \leq 15$}\label{leqnleq15}

We finally construct examples to show that the bound given by Lemma \ref{nbound} can be attained for $ n = 1, \dots , 11$, and that the bounds given by Corollary \ref{boundnleq15} are attained in both the soluble and non-soluble cases for $ n = 12, \dots , 15 $. In contrast to Theorem \ref{embednleq15}, a complete list of the respective minimal groups is not given. Using Theorem \ref{embednleq15}, it can be verified that groups in the following table are indeed minimal with respect to the embedding of all groups of order $n$ or less for $ 1 \leq n \leq 11$. The table thus shows that the bound given by Lemma \ref{nbound} can be achieved by more than one group for $ n = 3, \dots , 11 $. We recall from Section \ref{order8} that $H_1$ refers to the quasi-dihedral group of order 16.

\begin{table}[H]

\begin{center}

\begin{tabular}{cc}
\hline
$n$&\begin{tabular}{@{}c@{}}Examples of groups of minimal order in which all groups\\[-3pt]of order $n$ or less can be embedded\\ \end{tabular}\\
\hline
\rule{0pt}{3ex}

$1$ & $C_{1}$\\ 
$2$ & $C_{2}$\\ 
$3$ & $C_{6}, \ D_{3}$\\
$4$ & $C_{4} \times D_{3}, \ S_{4}$\\  
$5$ & $C_{4} \times C_{5} \times D_{3}, \ S_{5}$\\
$6$ & $C_{4} \times C_{5} \times D_{3}, \ S_{5}$\\  
$7$ & $C_{4} \times C_{5} \times C_{7} \times D_{3}, \ C_{7} \times  S_{5}$\\  
$8$ & $C_{5} \times C_{7} \times D_{3} \times H_1, \ C_{7} \times D_{15} \times H_1 $\\ 
$9$ & $C_{5} \times C_{7} \times C_{9} \times D_{3} \times H_1, \ C_{7} \times C_{9} \times D_{15} \times H_1 $\\
$10$ & $C_{7} \times C_{9} \times D_{15} \times H_1, \ C_{9} \times D_{105} \times H_1 $\\ 
$11$ & $C_{7} \times C_{9} \times C_{11}  \times D_{15} \times H_1, \ C_{9}  \times C_{11} \times D_{105} \times H_1 $\\ 
\hline
\end{tabular}

\end{center}

\caption{Some groups of minimal order in which all groups of order n or less can be embedded, where $ 1 \leq n \leq 11$}\label{tab1leqnleq11}

\end{table}
Table \ref{tab1leqnleq11} shows that, for $ n = $ 5, 6 and 7, both soluble and insoluble minimal groups can be constructed, though it is not known at present whether this will always be the case for $ n < 60 $. We now come to the construction of examples of groups of minimal order in which all groups of order $n$ can be embedded for $ 12 \leq n \leq 15 $. The groups in question are subgroups of the direct product of extensions of certain groups of small order by suitable transpositions. We first define the groups $ K_1, \dots , K_7 $ as follows:
\begin{eqnarray*}
K_1 & = & \langle \ a_1, \ a_2 \mid a_1^4 = a_2^4 = 1, \  a_1^2 =  a_2^2, \ a_1^{-1}a_2a_1 = a_2^3  \ \rangle \cong Q_2 ;\\
K_2 & = & \langle \ b_1, \ b_2,\  b_3 \mid b_1^2 = b_2^2 = b_3^3 = 1, \ [ b_1, b_2 ] = 1, \ b_3^{-1} b_1 b_3 = b_2, \ b_3^{-1} b_2 b_3 = b_1b_2 \ \rangle \cong A_4 ;\\
K_3 & = & \langle c \rangle \cong C_9 ;\\
K_4 & = & \langle d \rangle \cong C_5 ;\\
K_5 & = & \langle e \rangle \cong C_7 ; \\
K_6 & = & \langle f \rangle \cong C_{11} ; \\
K_7 & = & \langle g \rangle \cong C_{13} . \\
\end{eqnarray*}
We define an automorphism $ \theta_{i} $,  of order 2, for each  group $K_{i}$  by
\begin{eqnarray*}
&& a_1^{\theta_{1}}  =  a_2, \  a_2^{\theta_{1}}  =  a_1 ;\\
&& b_1^{\theta_{2}}  =  b_2 , \  b_2^{\theta_{2}}  =  b_1 ,\  b_3^{\theta_{2}} = b_3^{-1} ;\\
&& c^{\theta_{3}} = c^{-1} ;\\
&& d^{\theta_{4}} = d^{-1} ;\\
&& e^{\theta_{5}} = e^{-1} ;\\
&& f^{\theta_{6}} = f^{-1} ;\\
&& g^{\theta_{7}} = g^{-1} .\\
\end{eqnarray*}
We extend $K_i$ by $ \langle \theta_{i} \rangle $ to form the semi-direct product $ K_i \langle  \theta_i \rangle $ for $ i = 1, \dots , 7 $. Expressing the symmetric group $ S_5 = A_5 \langle (12) \rangle $ in cycle notation, we then form the direct product:
\[ G = K_1 \langle \theta_1 \rangle \times ... \times K_7 \langle \theta_7 \rangle \times A_5 \langle (12) \rangle .\]
We note that $G$ can be considered as a faithful extension of the direct product 
\[ Q_2 \times A_4 \times C_9 \times C_5 \times C_7 \times C_{11} \times C_{13} \times A_5 \]
by the elementary abelian 2-group of rank 8, $\langle \ \theta_{1}, \dots , \theta_{7}, \ (12) \ \rangle $.

The groups of order 15 or less can be embedded in $G$ in various ways. In particular the following isomorphisms can be verified:
\begin{eqnarray*}
&& D_3 \cong \langle \ c^3, \ \theta_{3}  \theta \ \rangle , \ \theta \in \langle \ \theta_{1}, \ \theta_{2}, \ \theta_{4}, \dots, \theta_{7}, \ (12) \ \rangle ;\\
&& D_3 \cong \ \langle \ (123), \ (12) \theta \ \rangle , \  \theta \in \langle \ \theta_{1},  \dots, \theta_{7} \ \rangle ;\\
&& C_8 \cong \langle \ a_1 \theta_1 \theta \ \rangle  , \ \theta \in \langle \ \theta_2, \dots , \theta_7 , \  (12) \ \rangle ;\\
&& C_2 \times C_4 \cong \ \langle \ a_1, \ b_1 \ \rangle \cong \langle \ a_1, \ (12)(34) \ \rangle ;\\
&& C_2 \times C_2 \times C_2 \cong \langle \ a_1^2, \ b_1, \ b_2 \ \rangle \cong \langle \ a_1^2, \ (12)(34), \ (14)(32) \ \rangle ;\\
&& C_2 \times C_2 \times C_2 \cong \langle \ a_1^2, \ (12)(34), \ \theta \ \rangle , \ 1 \neq \theta \in \langle \ \theta_{1},  \dots, \theta_{7} \ \rangle ;\\
&& D_4 \cong \langle \ a_1a_2, \ \theta_1 \theta \ \rangle , \ \theta \in \langle \ \theta_2 , \dots \theta _7 , \ (12) \ \rangle ;\\
&& C_3 \times C_3 \cong \langle \ b_3 , \ c^3 \ \rangle \cong \langle \ c^3 , \ (123) \ \rangle ;\\
&& C_{10} \cong \langle \ a_1^2 d \ \rangle \cong \langle \ a_1^2 (12345) \ \rangle ;\\
&& D_5 \cong \langle \ (12345), \ (15)(24) \ \rangle \cong \langle\  d, \ \theta_4 \theta \ \rangle , \ \theta \in \langle \ \theta_1, \ \theta_2, \ \theta_3, \ \theta_5 , \ \theta_6, \ \theta_7 \ \rangle ;\\
&& C_2 \times C_2 \times C_3 \cong C_2 \times C_6 \cong \langle \ b_1, \ b_2c^3 \ \rangle \cong \langle \ (12)(34), \  (14)(32)c^3 \  \rangle ;\\
&& C_{12} \cong \langle \ a_1c^3 \ \rangle ;\\
&& D_6 \cong \langle \ a_1^2b_3, \ \theta_2 \theta \ \rangle , \ \theta \in \langle \ \theta_1, \ \theta_3, \dots , \theta_7 \ \rangle ;\\
&& D_6 \cong \langle \ a_1^2(123), \ (12) \theta \ \rangle , \ \theta \in \langle \ \theta_1, \dots , \theta_7 \ \rangle ;\\
&& Q_3 \cong  \langle \ b_1  \theta_2 \theta_3 \theta, \ c^3 \ \rangle , \ \theta \in \langle \ \theta_1, \  \theta_4 , \dots, \ \theta_7 \ \rangle ; \\
&& Q_3 \cong \langle \ (14)(32)(12) \theta_3 \theta, \ c^3 \ \rangle , \ \theta \in \langle \ \theta_1, \ \theta_2, \ \theta_4, \dots, \theta_7 \ \rangle ;\\
&& C_{14} \cong \langle \ a_1^2e \ \rangle ;\\
&& D_7 \cong \langle \ e, \ \theta_5 \theta \ \rangle , \ \theta \in \langle \ \theta_1, \dots , \theta _4 , \ \theta_6, \ \theta_7, \ (12) \ \rangle ;\\
&& C_{15} \cong \langle \ c^3d \ \rangle \cong \langle \ c^3(12345) \ \rangle .\\
\end{eqnarray*}

Bearing in mind the above embeddings, Theorem \ref{embednleq15}, and the isomorphism types of $ K_1, \dots , K_7 $, it can be verified that the following families of subgroups of $G$ will attain the bounds given by Corollary \ref{boundnleq15} for the respective values of $n$. The groups are presented as extensions of subgroups of $ K_1 \times \dots \times K_7 \times A_5 $, referred to as ``base groups ", by suitable subgroups of order 2 in $ \langle \ \theta_1, \dots, \theta_7, \ (12) \ \rangle $. The order of the extension and the isomorphism type of the base group is given in each case.

\begin{tabular}{ll}

$ n = 12 $ (soluble) & \\
Extensions : & $ ( K_1 \times K_2 \times K_3 \times K_4 \times K_5 \times K_6 ) \langle \ \theta_1 \theta_2 \theta_3 \theta_4 \theta \  \rangle$, $\theta \in \langle \ \theta_5 , \ \theta_6 \  \rangle$\\
Order : & $ 2^6.3^3.5.7.11 = 665 280$\\
Base Group : &  $ Q_2 \times A_4 \times C_9 \times C_5 \times C_7 \times C_{11} $\\
 & \\
$ n = 12 $ (non-soluble) & \\
Extensions : & $ ( K_1 \times K_3 \times K_5 \times K_6 \times A_5 ) \langle \ \theta_1 \theta_3 (12) \theta \ \rangle$, $\theta \in \langle \   \theta_5 , \ \theta_6 \  \rangle$\\
Order : & $ 2^6.3^3.5.7.11 = 665 280$\\
Base Group : &  $ Q_2 \times C_9 \times C_7 \times C_{11} \times A_5 $\\
 & \\
$ n = 13, 14, 15 $ (soluble) & \\
Extensions : & $ ( K_1 \times K_2 \times K_3 \times K_4 \times K_5 \times K_6 \times K_7 ) \langle \ \theta_1 \theta_2 \theta_3 \theta_4 \theta_5 \theta \ \rangle$, $\theta \in \langle \ \theta_6 , \ \theta_7  \ \rangle$\\
Order : & $ 2^6.3^3.5.7.11.13 = 8 648 640$\\
Base Group : &  $ Q_2 \times A_4 \times C_9 \times C_5 \times C_7 \times C_{11} \times C_{13}$\\
 & \\
$ n = 13, 14, 15 $ (non-soluble) & \\
Extensions : & $ ( K_1 \times K_3 \times K_5 \times K_6 \times K_7 \times A_5 ) \langle \ \theta_1 \theta_3 \theta_5 (12) \theta \ \rangle$, $\theta \in \langle \ \theta_6 , \ \theta_7  \ \rangle$\\
Order : & $ 2^6.3^3.5.7.11.13 = 8 648 640$\\
Base Group : &  $ Q_2 \times C_9 \times C_7 \times C_{11} \times C_{13} \times A_5$\\
 & \\
\end{tabular}

These examples show that, for $ n = 12, \dots , 15 $, the bound given in Corollary \ref{boundnleq15} is attained by both soluble and insoluble groups. Proposition \ref{12embed} shows that the orders of the respective groups are also minimal in the non-soluble case. Our final result summarises our findings as regards the minimal order of a group in which all groups of order $n$ or less can be embedded, for  $ 1 \leq n \leq 15 $.

\begin{proposition}\label{generalcase}

Let $G$ be a group of minimal order in which all groups of order $n$ or less can be embedded. Then, for $ 3 \leq n \leq 15 $, $G$ is not unique and $|G|$ is as in the following table:

\begin{table}[H]
\begin{center}
\begin{tabular}{ccccc}
\hline
$n$&Minimal order && $n$&Minimal order\\
\hline
\rule{0pt}{3ex}
$1$ & $1$ &\ \ & $9$ & $ 30 \ 240 = 2^{5} \cdot  3^{3} \cdot 5 \cdot 7 $\\ 
$2$ & $2$ && $10$ & $ 30 \ 240 $\\ 
$3$ & $6 = 2\cdot 3 $ && $11$ & $ 332 \ 640 = 2^{5} \cdot 3^{3} \cdot 5 \cdot 7 \cdot 11 $\\
$4$ & $ 24 = 2^{3} \cdot 3 $ && $12$ & $ 665 \ 280 = 2^{6} \cdot 3^{3} \cdot 5 \cdot 7 \cdot 11 $\\
$5$ & $ 120 = 2^{3} \cdot 3 \cdot 5 $ && $13$ & $ 8 \ 648 \ 640 = 2^{6} \cdot 3^{3} \cdot 5 \cdot 7 \cdot 11 \cdot ^13 $\\
$6$ & $ 120 $ && $14$ & $ 8 \ 648 \ 640 $\\
$7$ & $ 840 = 2^{3} \cdot 3 \cdot 5 \cdot 7 $ && $15$ & $ 8 \ 648 \  640 $\\
$8$ & $ 3 \ 360 = 2^{5} \cdot 3 \cdot 5 \cdot 7 $ && &\\
\hline
\end{tabular}
\end{center}
\caption{Minimal order of a group in which all groups of order $n$ or less can be embedded, for $ 1 \le n \le 15 $}\label{MinGroups3}
\end{table}

\end{proposition}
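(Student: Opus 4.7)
The plan is to deduce the proposition by assembling the lower bounds and the explicit upper-bound constructions already established, verifying the entries of Table \ref{MinGroups3} case by case. The trivial cases $n = 1, 2$ are immediate. For $3 \leq n \leq 11$, I would first check that each tabulated value coincides with the Lemma \ref{nbound} lower bound $p_1^{2k_1 - 1} \cdots p_m^{2k_m - 1}$; this is a routine exponent count over the primes at most $n$. Attainability is then supplied by Table \ref{tab1leqnleq11}, whose two entries in each row are verified to contain every group of order at most $n$ by combining Theorem \ref{embednleq15} with elementary direct-factor containments such as $D_5 \leqslant D_{15} \leqslant D_{105}$, $D_7 \leqslant D_{105}$, and $C_2 \times H_1 \leqslant D_3 \times H_1$, each applied factor by factor across the stated direct products. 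The presence of two non-isomorphic examples in each such row immediately gives non-uniqueness for these $n$.

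For $12 \leq n \leq 15$ I would split into soluble and non-soluble sub-cases. Corollary \ref{boundnleq15} supplies the soluble lower bound, namely $2^6 \cdot 3^3 \cdot 5 \cdot 7 \cdot 11$ for $n = 12$ and $2^6 \cdot 3^3 \cdot 5 \cdot 7 \cdot 11 \cdot 13$ for $n = 13, 14, 15$. For the non-soluble sub-case, suppose for contradiction that a non-soluble $G$ of strictly smaller order embeds every group of order at most $n$. Lemma \ref{nbound} forces $|G|$ to be a multiple of the corresponding Lemma-\ref{nbound} value, so only finitely many candidate orders lie below the soluble bound, and each such order divides $2^5 \cdot 3^3 \cdot 5 \cdot 7 \cdot 11 \cdot 13$. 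Any chief series of such a $G$ contains at least one non-soluble chief factor, but Lemma \ref{2CF} precludes two or more such factors in a group of this order, while Proposition \ref{12embed} precludes exactly one. This contradiction establishes the same lower bound in the non-soluble case. The explicit soluble and non-soluble extensions constructed at the end of Section \ref{leqnleq15} realise this bound, simultaneously yielding the upper bound, the non-uniqueness, and the values in Table \ref{MinGroups3}.

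The principal difficulty, I expect, is the non-soluble lower bound: one must use Lemmas \ref{nbound} and \ref{2CF} together with Proposition \ref{12embed} in concert to rule out every admissible non-soluble order strictly below the soluble bound. Once this is in place, the remainder of the proof amounts to matching the bounds against the constructions in Section \ref{leqnleq15} and Table \ref{tab1leqnleq11}, and tabulating the resulting orders to confirm Table \ref{MinGroups3}.
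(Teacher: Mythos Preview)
Your proposal is correct and matches the paper's approach: the proposition is a summary statement, and the paper justifies it exactly as you describe, by combining the Lemma~\ref{nbound} lower bounds with the constructions in Table~\ref{tab1leqnleq11} for $n\le 11$, and Corollary~\ref{boundnleq15} together with Lemma~\ref{2CF}, Proposition~\ref{12embed}, and the explicit extensions for $12\le n\le 15$. Your treatment of the non-soluble lower bound is in fact slightly more explicit than the paper's one-line citation of Proposition~\ref{12embed}, since you correctly note that Lemma~\ref{2CF} is also needed to exclude the case of two or more non-soluble chief factors; this is implicit in the paper but worth spelling out.
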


\begin{flushleft}

\vspace{2 mm}

Robert Heffernan  \\
University of Connecticut,\\
Storrs,\\
CT 06269-3009\\
USA\\
e-mail: robert.heffernan@uconn.edu

\vspace{6 mm}

Des MacHale \\
University College, Cork\\
Ireland\\
d.machale@ucc.ie

\vspace{6 mm}

Brendan McCann\\
Department of Mathematics and Computing,\\
Waterford Institute of Technology,\\
Cork Road,\\
Waterford,\\
Ireland\\
e-mail: bmccann@wit.ie

\end{flushleft}

\end{document}